\newif\ifARXIV
\ARXIVtrue

\newif\ifMP
\MPfalse

\newif\ifSIAM
\SIAMfalse

\ifMP
\documentclass[sn-mathphys-num]{sn-jnl}
\usepackage{graphicx}%
\usepackage{multirow}%
\usepackage{amsmath,amssymb,amsfonts}%
\usepackage{amsthm}%
\usepackage{mathrsfs}%
\usepackage[title]{appendix}%
\usepackage{textcomp}%
\usepackage{manyfoot}%
\usepackage{booktabs}%
\usepackage{algpseudocode}%
\usepackage{listings}%
\usepackage{graphicx,fancyhdr,url}
\usepackage{amsthm}
\usepackage{geometry}                		
\usepackage{orcidlink}   
\fi

\newcommand{\PaperTitle}[0]{An adaptive interior-point method with backtracking line search for convex constrained optimization}
\newcommand{\MyThanks}[0]{Department of Industrial Engineering, University of Pittsburgh. Email: \{fah33, ohinder\}@pitt.edu.}

\ifARXIV
\documentclass{article}
\author{Fadi Hamad, Oliver Hinder\thanks{\MyThanks}}
\fi

\ifSIAM
\documentclass[review,hidelinks,onefignum,onetabnum]{siamart250211}
\headers{An adaptive interior-point method with backtracking line search for convex constrained optimization}{Fadi Hamad and Oliver Hinder}
\title{An adaptive interior-point method with backtracking line search for convex constrained optimization \funding{This work was funded by AFOSR grant \#FA9550-23-1-0242}}

\author{Fadi Hamad\thanks{Department of Industrial Engineering, University of Pittsburgh, 
\email{fah33@pitt.edu}.}
\and Oliver Hinder\thanks{Department of Industrial Engineering, University of Pittsburgh,
\email{ohinder@pitt.edu}.}}
\fi

\usepackage{geometry}                		
\usepackage{graphicx}	
\ifARXIV
\usepackage{subcaption}
\fi
\usepackage{amssymb}
\usepackage{amsthm}
\usepackage{tabularx}
\usepackage{algorithm}
\usepackage{algpseudocode}

\usepackage{enumerate}
\usepackage{longtable}
\usepackage{listings}
\usepackage{booktabs}
\usepackage{tablefootnote}
\usepackage{multirow}
\ifARXIV
\usepackage[hidelinks]{hyperref}
\fi
\newcommand{\eye}{\mbox{\bf I}}

\usepackage{physics}

\newtheorem{theorem}{Theorem}
\newtheorem{lemma}{Lemma}
\newtheorem{claim}{Claim}
\newtheorem{fact}{Fact}
\newtheorem{proposition}{Proposition}

\newtheorem{remark}{Remark}
\newtheorem{definition}{Definition}
\newtheorem{condition}{Condition}

\usepackage{accents}
\newcommand{\ubar}[1]{\underaccent{\bar}{#1}}
\usepackage{enumitem}

\newcommand{\undereq}[1]{\underset{#1}{=}}
\newcommand{\underle}[1]{\underset{#1}{\le}}
\newcommand{\underge}[1]{\underset{#1}{\ge}}

\newcommand{\diam}{\mathbf{diam}}

\newcommand{\N}{\mathbf{N}}
\newcommand{\R}{\mathbf{R}}

\newcommand{\argmin}{\mathop{\rm argmin}}

\def\citet{\cite}
\def\citep{\cite}

\usepackage{cleveref}

\crefname{fact}{fact}{facts}
\Crefname{fact}{Fact}{Facts}
\crefname{condition}{condition}{conditions}
\Crefname{condition}{Condition}{Conditions}

\newtheorem{assumption}{Assumption}

\newcommand{\barrier}[1][]{%
\ifthenelse{\isempty{#1}}
{\psi_{\mu}}
{\psi_{\mu_{#1}}}
}
\newcommand{\NumCon}{q}
\newcommand{\cons}{a}
\newcommand{\Lag}{\mathcal{L}}
\newcommand{\ones}{\mathbf 1}
\newcommand{\zeros}{\mathbf 0}
\newcommand{\reals}{{\mbox{\bf R}}}
\newcommand{\Convex}{CONVEX}
\newcommand{\dir}{d^}
\newcommand{\X}{\mathcal{X}}
\newcommand{\LipGrad}{L_G}
\newcommand{\LipHess}{L_H}
\newcommand{\NumVar}{n}
\newcommand{\StoppingTolerance}{\bar{\mu}}

\newcommand{\ConSet}[0]{[ \NumCon ]}
\newcommand{\SufficientConstant}{C}
\newcommand{\diag}{\mathop{\bf diag}}
\newcommand{\interior}[1]{\mathbf{interior}(#1)}

\newcommand{\backtrackingFactor}{\gamma}
\usepackage{amsmath,amsthm}
\newcommand{\alphaInd}[1]{\ubar{\alpha}_{#1}}
\usepackage{color}
\ifARXIV
\usepackage[numbers,square,sort&compress]{natbib}
\fi
\newcommand{\Mk}{\mathcal{M}_k}
\newcommand{\mk}{M_k}

\newcommand{\ykPlusOne}[0]{\sigma_k}

\newcommand{\ColdStartScale}{\rho_{c}}
\newcommand{\RegularizationShrink}{\rho_{s}}
\newcommand{\StartingTargetReduction}{\tau}

\newcommand{\DeltaInit}{\tilde{\delta}}
\newcommand{\DeltaMid}{\delta_{\mathbf{mid}}}
\newcommand{\DirDelta}[1][\delta]{d(#1)}

\newcommand{\SideA}[1][i]{a_{#1}}
\newcommand{\SideB}[1][i]{b_{#1}}

\algdef{SE}[INDENT]{Indent}{EndIndent}{}{}
\algtext*{Indent}     
\algtext*{EndIndent}  

\newcommand{\edit}[1]{{#1}}
\begin{document}
	
\newcommand{\abstractText}[0]{Interior-point methods (IPMs) are widely adopted due to their high practical efficiency in solving linear, convex, and nonconvex optimization problems. 
	For convex optimization, this performance is theoretically well-supported: there are strong complexity guarantees for self-concordant barrier setups \cite{nesterov1994interior}, which cover linear and conic optimization. 
	On the other hand, unconstrained convex optimization is well-studied. 
	However, there is limited analysis of constrained convex optimization methods without the self-concordance assumption. 
	We develop and analyze a regularized Newton method with line search applied to the log barrier function in 
	the setting that the objective and constraints are thrice differentiable and have Lipschitz continuous first and second derivatives. 
	Starting from a strictly feasible point, our method finds an $\epsilon$-approximately optimal solution in $\tilde{O}(\epsilon^{-2/3})$ iterations.
}

\ifMP
\keywords{
	Constrained programming, interior-point method, convex optimization, worst-case iteration complexity, line-search method	
}
\fi

\ifMP
\author[1]{\fnm{Fadi} \sur{Hamad} \orcidlink{0000-0002-2427-9734}}\email{fah33@pitt.edu}

\author[1]{\fnm{Oliver} \sur{Hinder} \orcidlink{0000-0002-5077-0359}}\email{ohinder@pitt.edu}

\affil[1]{\orgdiv{Department of Industrial Engineering}, \orgname{University of Pittsburgh}, \orgaddress{\street{5th Ave}, \city{Pittsburgh}, \postcode{15260}, \state{PA}, \country{USA}}}
\abstract{\abstractText}
\title{\PaperTitle}
\maketitle
\else
\title{\PaperTitle}
\maketitle
\begin{abstract}
	\abstractText
\end{abstract}
\fi

\section{Introduction}

This paper studies the constrained convex optimization problem
\begin{flalign}\label{eq:optimization-constrained-problem}
	\min_{x \in \X }{~f(x)} \text{ where } \X := \{ x \in \reals^{\NumVar} : \cons(x) \ge \zeros \}
\end{flalign}
where $f: \R^{\NumVar} \rightarrow \R$ is convex, and $\cons: \R^{\NumVar} \rightarrow \R^{\NumCon}$ is concave.
We assume that both functions are thrice differentiable with Lipschitz continuous first and second derivatives. 
Additionally, we assume Slater's condition holds, i.e., there exists a strictly feasible point $x_1$ such that $\cons(x_1) > \zeros$, which we use as the starting point of our algorithm. 
Under these assumptions, we aim to find an approximately optimal solution, i.e., a point $\tilde{x}$ such that
\[
f(\tilde{x}) \le \epsilon + \min_{x \in \X }{~f(x)} 
\]
for some small $\epsilon > 0$.
To solve this problem, we move the constraints into the objective by using a log barrier function:
\begin{flalign}\label{barrier-problem}
	\barrier(x) := f(x) - \mu \sum_{i=1}^{\NumCon} \log(\cons_i(x))
\end{flalign} 
where the barrier parameter $\mu$ is positive. 
This transforms the constrained optimization problem~\eqref{eq:optimization-constrained-problem} into an unconstrained problem~\eqref{barrier-problem}, allowing us to use methods from unconstrained optimization such as damped Newton's method \cite{nesterov1994interior}.
More specifically, we employ a regularized Newton method with line search, i.e.,
\[
x_{k+1} = x_{k} + \alpha_k d^{x}_k
\]
where $\delta_k$ is a carefully selected regularization parameter,
$d^{x}_k :=  -(\grad^2 \barrier(x_k) + \delta_k \eye)^{-1} \grad \barrier(x_k)$ is the regularized Newton direction,
and $\alpha_k \in [0,1]$ is the step size, e.g., selected via a backtracking line search.
This is a popular practical strategy for solving constrained nonlinear optimization problems \cite{vanderbei1999loqo,wachter2006implementation} (also, see \citet[Algorithm 3.2]{nocedal2006numerical}).
However, there are no known worst-case complexity bounds for this type of method.

A natural approach would be to apply standard trust-region or cubic regularization methods to solve this problem \cite{nesterov2006cubic,cartis2011adaptiveII,hamad2022consistently,curtis2017trust}. Unfortunately, these methods 
require that the Hessian is Lipschitz continuous. However, the Hessian of the log barrier is not
Lipschitz continuous. Moreover, even if we 
only require that the Hessian of the log barrier is Lipschitz continuous on the initial level set, $\{ x \in \X : \barrier(x) \le \barrier(x_1) \}$,
the corresponding Lipschitz constant can grow  
exponentially with the inverse of the barrier parameter $\mu$, leading to extraordinarily large
worst-case iteration bounds \cite[Section 7.2.]{hinder2019principled}.

\paragraph{Our contributions:}

\begin{enumerate}
\item We provide the first worst-case bound for regularized Newton with line search on log barriers for 
general convex constrained optimization problems. To prove this result we develop a new proof technique
 based by carefully constructing a dynamic lower bound on our method's step size.
\item We establish an $\tilde{O}(\epsilon^{-2/3})$ bound on the number of iterations of our method.
\item Despite its simplicity, our method shows strong potential in preliminary comparisons with 
IPOPT \cite{wachter2006implementation}, a state-of-the-art interior point method.
\end{enumerate}

Our techniques build on ideas from \citet{hinder2024worst}, which provides an algorithm that, while impractical because it requires knowledge of problem parameters such as the Lipschitz constant, provides an $O(\mu^{-7/4})$ iteration bound for log barrier methods on general nonconvex constrained optimization problems to find $\mu$-approximate KKT points.

\subsection{Related work}\label{sec:related-work}

\paragraph{Second-order methods for unconstrained convex optimization with Lipschitz assumptions.}
While this paper is unique because it develops an implementable and adaptive second-order method for \emph{constrained}
convex optimization with Lipschitz continuous derivatives, there is a substantial line of
work for \emph{unconstrained} convex optimization under such
assumptions.
 The cubically regularized Newton method of \citet{nesterov2006cubic}
established the first global complexity bounds in this setting under Lipschitz continuity
of the Hessian, achieving an $O(1/k^2)$ rate for convex problems, which was subsequently
accelerated to $O(1/k^3)$ by \citet{nesterov2008accelerating}. \citet{monteiro2013accelerated}
proposed the accelerated hybrid proximal extragradient (A-HPE) framework and a corresponding
accelerated Newton proximal extragradient (A-NPE) method with an improved
$\tilde{O}(1/k^{7/2})$ rate, which was later shown to be optimal up to logarithmic factors
by the lower bounds of \citet{agarwal2018lower} and \citet{arjevani2019oracle}.
Lipschitz continuity of higher derivatives was exploited by
\citet{nesterov2021superfast} to obtain an $O(1/k^4)$ convergence rate. More recently,
\citet{mishchenko2023regularized} develop an adaptive regularized
Newton method for convex optimization with excellent practical performance.

\paragraph{Worst-case guarantees for general constrained convex optimization via variational inequalities.}
One technique for solving general 
convex optimization problems is to reformulate the problem in terms of the optimality conditions and use variational inequality methods to solve them \cite{rockafellar1976monotone}.
Under the assumption that the gradient is Lipschitz, mirror-prox uses a first-order oracle to obtain an $O(\epsilon^{-1})$ convergence rate \cite{nemirovski2004prox}.
Under the assumption of Lipschitz continuous Hessians, \citet{monteiro2012iteration} introduced a Newton proximal extragradient method and obtained a $\tilde{O}(\epsilon^{-2/3})$ iteration complexity for monotone variational inequalities. 
This $\tilde{O}(\epsilon^{-2/3})$ rate has since been recovered by a number of more recent methods \cite{bullins2022higher,jiang2025generalized,lin2024perseus,adil2022optimal}; \citet{adil2022optimal} also provide matching upper and lower bounds showing that $O(\epsilon^{-2/3})$ is the optimal oracle complexity. 
A key caveat is that these are \emph{oracle} rates. In particular, each iteration involves finding an approximate solution of a variational inequality derived from a second-order Taylor model. 
Solving these subproblems reduces to a few linear system solves in the unconstrained setting but
once even simple constraints such as the nonnegative orthant are imposed, this subproblem becomes more expensive. For example, the subproblem of \citet[Definition~3.1]{monteiro2012iteration} reduces to a monotone linear complementarity problem, which can be solved by interior point methods \cite{zhang1994convergence,simantiraki1997infeasible} at higher expense than a linear system solve.
Our work obtains a similar $\tilde{O}(\epsilon^{-2/3})$ rate\footnote{These bounds are not directly comparable because (i) of the aforementioned subproblem solver, (ii) our final bound (\Cref{thm:constrained-case}) includes a dependence on the number of constraints, $\NumCon$, and (iii) we assume the problem has a strict relative interior.} for general constrained convex problems but with cheaper and more practical iterations: each iteration of our method only requires a logarithmic number of linear system solves and evaluations of $\Lag$, $\grad \Lag$, and $\grad^2 \Lag$, where $\Lag(x,y) := f(x) - y^\top \cons(x)$ is the Lagrangian. 

\paragraph{Interior-point methods} 
Interior-point methods (IPMs) are a very popular second-order method to solve constrained optimization problems, including in linear, conic, convex, and nonconvex settings \cite{andersen1998computational, lustig1992implementing, sturm1999using,
toh1999sdpt3, byrd2000trust, conn2000primal, byrd2006knitro, vanaret2025implementing, wachter2006implementation}.
The main theoretical complexity result in the convex setting for IPMs states that they require $O(\sqrt{c}\log(1/\epsilon))$ iterations to find an $\epsilon$-global minimum \cite{nesterov1994interior}, where $c$ is the self-concordance parameter of the barrier function and $\epsilon$ stands for the desired accuracy or duality gap. 
However, this theoretical foundation is restrictive: for general nonlinear convex problems
the log barrier is not self-concordant and, while the universal barrier exists
for arbitrary convex bodies, it is computationally intractable \cite{nesterov1994interior,pmlr-v40-Bubeck15b}.

\paragraph{Quasi-self-concordance} Quasi-self-concordance is a variant of classical self-concordance introduced by \citet{bach2010self} in the context of logistic regression. Within this class, \citet{carmon2020acceleration} obtain accelerated rates via a ball optimization oracle, and \citet{doikov2025minimizing} show that gradient-regularized Newton methods can also obtain similar rates. 
While this structure is well-suited to certain machine learning problems, it cannot accommodate barrier functions such as $-\log(x)$, whose curvature diverges at the boundary, and is therefore not applicable to interior point methods for general constrained problems.

\paragraph{Adaptive second-order methods for nonconvex optimization} 
While this paper is focused on developing an adaptive method for convex optimization, there is a large body of work 
studying adaptive methods for nonconvex optimization both in the unconstrained \cite{cartis2011adaptiveII,curtis2017trust,curtis2021trust,hamad2022consistently,hamad2024simple} 
and constrained settings \cite{birgin2016evaluation, cartis2011evaluation,cartis2012adaptive}.
However, \citet{hinder2024worst} is the only paper studying log barrier methods with general (nonconvex) 
constraints, but it requires knowledge of problem parameters such as the Lipschitz constant, making it 
nonadaptive and impractical.

\subsection{Outline and notation}

\paragraph{Outline} The paper is structured as follows. In \Cref{sec:our-adaptive-line-search-IPM}, 
we introduce and analyze our adaptive line search method for minimizing the 
log barrier with a fixed barrier parameter showing that it can find 
a $\mu$-approximate stationary interior point in $O(\mu^{-5/3})$ iterations;
proving this result involves developing a novel proof technique that reasons about the minimum step size thresholds 
relative to the predicted reduction in the log barrier for a unit regularized Newton step.
In \Cref{sec:optimality-guarantees-convexity-regularity-condition}
we apply a standard scheme for annealing the barrier parameter to 
our method from \Cref{sec:our-adaptive-line-search-IPM}, yielding 
our main result, which collapses our $O(\mu^{-5/3})$ iteration bound into 
a $O(\epsilon^{-2/3})$ iteration bound for finding an $\epsilon$-approximately optimal solution.
In \Cref{sec:numerical-results}, we provide the numerical results.
Finally, \Cref{sec:discussion} provides a discussion.

\paragraph{Notation} Let $\N$ be the set of natural numbers (starting from one), $\eye$ be the identity matrix, $\R$ be the set of real numbers, and $\R_{+}$ be the set of nonnegative real numbers. Let $\diag(v)$ be a diagonal matrix with entries composed of the vector $v$. Let $\Convex\{u, v\} = \{\theta u + (1-\theta) v : \theta \in [0, 1]\}$. Let $\barrier^{\star} := \inf_{x \in \X} \barrier(x)$, and $f_{\star} := \inf_{x \in \X} f(x)$. Unless otherwise
specified, $\log(\cdot)$ is the natural logarithm. 
Let $\interior{\X} := \{ x \in \R^{\NumVar} : \cons(x) > \zeros \}$.
Let $\diam(\X) = \sup_{u,v \in \X} \| u - v \|_2$.
\section{A log barrier method with fixed barrier parameter}\label{sec:our-adaptive-line-search-IPM}

In this section, we detail our adaptive line search log barrier method (\Cref{alg:IPM}). 
To compute the next iterate $x_{k+1}$, we first carefully select a regularized Newton search direction $\dir{x}_k$ 
(Condition~\ref{condition:search-dir}), and then determine a step size $\alpha_k \in (0, 1]$ via a line search
(Condition~\ref{step-size-condition}).

To generate the search direction, we consider the second-order Taylor series
approximation of the function at the current iterate:
\begin{flalign}\label{eq:newton-direction-IPM-unconstrained}
\Mk(d) := \grad \barrier(x_k)^\top d + \frac{1}{2} d^\top \grad^2 \barrier(x_k) d.
\end{flalign}
When generating our search direction, instead of taking a full Newton step and minimizing \Cref{eq:newton-direction-IPM-unconstrained}, we take a regularized Newton step:
\begin{flalign*}
	\dir{x}_k := -(\grad^2 \barrier(x_k) + \delta_k \eye)^{-1} \grad \barrier(x_k). 
\end{flalign*}
We require our search direction $\dir{x}_k$ to satisfy Condition~\ref{condition:search-dir}.
For brevity, we denote $\mk = \Mk(\dir{x}_k)$. 

\begin{condition}\label{condition:search-dir} 	
	Let $\eta_1$ and $\eta_2$ be problem-independent constants with $\eta_1 \in (0, 1/2)$ and $\eta_2 \in (\eta_1, 1/2)$.
	Let $\varepsilon_k > 0$ be a sequence of scalars with $\| \grad \barrier(x_k) \|_2 > \varepsilon_k$.
	The search direction $\dir{x}_k$ and regularization parameter $\delta_k \ge 0$ satisfy
	\begin{subequations}
		\begin{flalign}
			(\grad^2 \barrier(x_k) + \delta_k \eye) \dir{x}_k =& -\grad \barrier(x_k) \label{eq:search-direction} \\
			\eta_1 \le & ~\frac{-\mk}{\varepsilon_k \|\dir{x}_k\|_2} \label{eq:direction-condition-eta1} \\
			\text{either }\barrier(x_k + \dir{x}_k) > \barrier(x_k) + \SufficientConstant \cdot \mk , ~\text{ or }~ &\frac{\| \dir{x}_k \|_2 \delta_k}{2 \varepsilon_k} \le \eta_2 \label{eq:direction-condition-eta2}
		\end{flalign}	
	\end{subequations}
	where $\varepsilon_k := \mu \sqrt{1 + \| y_k \|_1}$.
\end{condition}

\noindent In \Cref{appendix-satisfy-condition-phi-unconstrained} we show it is always possible to satisfy 
Condition~\ref{condition:search-dir} with at most 
\[
O\left( 1
+\log_2\log_2 
\left(\frac{\| \grad^2 \barrier(x_k) \|_2}
{\DeltaInit} +
\frac{\DeltaInit(\barrier(x_k) - \barrier^\star)}
{\SufficientConstant \varepsilon_k^2}
\right) \right)
\]
factorizations and function evaluations where $\DeltaInit$ is some initial estimate of the regularization parameter.
The key idea is that it suffices to find a regularizer $\delta$ such that
\begin{flalign}\label{eq:bisection-scheme}
	\phi_k(\delta)  \in [\eta_1, \eta_2] \text{ where } \phi_k(\delta) := \varepsilon_k^{-1} \delta \| (\grad^2 \barrier(x_k) + \delta \eye)^{-1}  \grad \barrier(x_k) \| .
\end{flalign}
We find such a regularizer by applying a bisection search to $\phi_k$ on a log-scale. 
To find a search interval for the bisection routine we devise a simple doubling scheme on a log-scale that leverages several important 
facts we can prove about $\phi_k$, for all $\delta > 0$: $(i)$ for $\delta' \ge \delta$, $\phi_k(\delta) \le \phi_k(\delta')$
and $\delta \phi_k(\delta') \le \delta' \phi_k(\delta)$, 
$(ii)$ for $\delta \ge \frac{\eta_1}{1 - \eta_1} \| \grad^2 \barrier(x_k) \|_2$ we have 
$\phi_k(\delta) \ge \eta_1$, $(iii)$ for $\delta \le \eta_2^2 \SufficientConstant \varepsilon_k^2 (\barrier(x_k) - \barrier^\star)^{-1}$ 
we have $\phi_k(\delta) \le \eta_2$ or
$\barrier(x_k + \DirDelta) > \barrier(x_k) + \SufficientConstant \Mk(\DirDelta)$. These facts allow us not only to efficiently find an initial interval
but that the endpoint of the interval will be sufficiently close for the bisection search to be efficient.

Given a search direction satisfying Condition~\ref{condition:search-dir}, we now find a step size such that the sufficient decrease condition holds 
but for larger step size it fails (Condition~\ref{step-size-condition}). There are many line search methods that one could use 
to satisfy such a condition but the particular method we use in our implementation is a variant of the standard 
backtracking method. See Appendix~\ref{appendix-line-search-routine} for details and guarantees on the number of function 
evaluations of this routine.

\begin{condition}[Step size condition]\label{step-size-condition}
	Let $\SufficientConstant \in (0, 1/2)$ and $\backtrackingFactor \in (0, 1)$ be problem-independent constants.
	Suppose that $\alpha_k \in (0,1]$ provides sufficient objective reduction, i.e.,
	\begin{subequations}
	\begin{flalign}\label{eq:sufficient-decrease}
		\barrier(x_k + \alpha_k \dir{x}_k) \leq \barrier(x_k) + \SufficientConstant \cdot \Mk(\alpha_k \dir{x}_k)
	\end{flalign}
	and either $\alpha_k = 1$ or the larger step size $\hat{\alpha}_k = \min\{1, \alpha_k / \backtrackingFactor \}$ has insufficient objective reduction, i.e., 
	\begin{flalign}\label{eq:sufficient-decrease-fails-for-larger-step-size}
		\barrier(x_k + \hat{\alpha}_k \dir{x}_k) > \barrier(x_k) + \SufficientConstant \cdot \Mk(\hat{\alpha}_k \dir{x}_k).
	\end{flalign}
\end{subequations}
\end{condition}

Similar to the termination criteria in \citet[Algorithm 1]{hinder2024worst}, we consider \Cref{alg:IPM} converged 
when it reaches an approximate first-order stationary interior point (SIP).
Note that Definition 2 of \citet{hinder2024worst} also includes terms $\tau_l$ and $\tau_c$ 
to trade the tolerance of the complementarity conditions and duality gap, we omit these for simplicity.

\begin{definition}\cite[Definition 2]{hinder2024worst}\label{eq:first-order-SIP-full:first-order} 
A $\mu$-approximate first-order stationary interior point (SIP) is a point $(x, y)$ defined by
\begin{flalign}
(\cons(x), y) &>  \zeros \tag{SIP1.a} \label{eq:first-order-SIP-full:feasible} \\
\abs{y_i \cons_i(x) - \mu} &\le \frac{\mu}{2} \quad \forall i \in \ConSet \tag{SIP1.b} \label{eq:first-order-SIP-full:centrality}  \\
\norm{\grad_{x} \Lag(x,y) }_{2} &\le \mu \sqrt{\norm{y}_{1} + 1}\tag{SIP1.c} \label{eq:first-order-SIP-full:grad-lag}.
\end{flalign}
\end{definition}

While our method is a log barrier method, we do use a dual variable for termination. 
We either use $y_k$ (Termination check I) or $\hat{y}_{k+1}$ (Termination check II) where
\begin{subequations}\label{eq:define-all-algorithm-variables}
\begin{flalign}
&s_k := \cons(x_k), \quad  y_k := \mu S_k^{-1} \ones, \quad  S_k := \diag(s_k), \quad  Y_k = \diag(y_k) \\
&\dir{s}_k := \grad \cons(x_k) \dir{x}_k, \quad \dir{y}_k :=-Y_k S_k^{-1} \dir{s}_k \\
&\hat{y}_{k+1} := y_k + \dir{y}_k. \label{eq:hat-y-k+1}
\end{flalign}
\end{subequations}
Note that $y_k=\mu S_k^{-1}\ones$ is the standard dual multiplier estimate
induced by the log-barrier KKT conditions, since the barrier
stationarity conditions imply $Sy=\mu\ones$
\cite[Sec.~19.1, Eqs.~(19.4)--(19.5)]{nocedal2006numerical}.
The search directions given in \Cref{eq:define-all-algorithm-variables}
satisfy the classical linear system for computing the 
search directions of primal-dual interior point methods \cite[Sec.~19.2, Eq.~(19.6)]{nocedal2006numerical}:
\begin{subequations}\label{eq:primal-dual-interior-point-search-direction}
\begin{flalign}
(\grad_{xx}^2 \Lag(x_k,y_k) + \delta_k \eye) \dir{x}_k - \grad \cons(x_k)^\top \dir{y}_k &= -\grad_x \Lag(x_k,y_k)  \\
\grad \cons(x_k) \dir{x}_k - \dir{s}_k &= \zeros \\
S_k \dir{y}_k + Y_k \dir{s}_k &= \mu \ones - S_k y_k .
\end{flalign}
\end{subequations}
These two termination checks play different roles. 
Termination check I is equivalent to checking 
if $\| \grad \barrier(x_k) \| \le \varepsilon_k$ and is needed to establish 
that $\| \grad \barrier(x_k) \| > \varepsilon_k$ so we can satisfy 
Condition~\ref{condition:search-dir}; on the other hand, 
termination check II is used in our convergence analysis
to establish that the algorithm always reduces the log barrier 
or terminates.

\begin{algorithm}
	\caption{\textbf{A}daptive
			\textbf{L}ine \textbf{S}earch \textbf{I}nterior \textbf{P}oint \textbf{M}ethod}
	\label{alg:IPM}
	\begin{algorithmic}[1]
		\Function{AdaptiveLineSearchIPM}{$x_1,\mu$}
			\State \textbf{Input requirements:} $x_1 \in \interior{\X}$, $\mu > 0$
			\State \textbf{Problem-independent parameters:} $\SufficientConstant \in (0, 1/2)$, $\backtrackingFactor \in (0, 1)$, $\eta_1 \in (0, 1/2)$, $\eta_2 \in (\eta_1 , 1/2)$
			\State \textbf{Requirements:} $2\eta_2 + \frac{5}{8} (1 - \SufficientConstant) \eta_1 \le \sqrt{7/8}$
			\For{$k = 1, \dots, \infty$}
				\State \textbf{Termination check I:}
				\Indent 
					\State Calculate $y_{k}$ as per \Cref{eq:define-all-algorithm-variables}
					\If{$(x_{k},y_{k})$ is a $\mu$-approximate stationary interior point (\Cref{eq:first-order-SIP-full:first-order})}
						\State \Return $(x_{k},y_{k})$
					\EndIf
				\EndIndent
				\State 
				\State \textbf{Step of regularized Newton with line search:}
				\Indent 
					\State Compute a regularized Newton search direction $\dir{x}_k$ that satisfies Condition~\ref{condition:search-dir}
					\State Find a step size $\alpha_k$ that satisfies Condition~\ref{step-size-condition}
					\State $x_{k+1} \gets x_k + \alpha_k \dir{x}_k$
				\EndIndent
				\State
				\State \textbf{Termination check II:}
				\Indent 
					\State Calculate $\hat{y}_{k+1}$ as per \Cref{eq:define-all-algorithm-variables}
					\If{$(x_{k+1},\hat{y}_{k+1})$ is a $\mu$-approximate stationary interior point (\Cref{eq:first-order-SIP-full:first-order})}
						\State \Return $(x_{k+1},\hat{y}_{k+1})$
					\EndIf
				\EndIndent
			\EndFor
		\EndFunction
	\end{algorithmic}
\end{algorithm}

We will use \Cref{lem:model-alpha-k-bound}, which is a well-known fact about convex functions, in our analysis.

\begin{fact}[Well-known]\label{lem:model-alpha-k-bound}
If $\barrier$ is convex and $\dir{x}_k$ satisfies~\eqref{eq:search-direction} with $\delta_k \ge 0$, then for all $\alpha \in [0,1]$,
$\Mk(\alpha \dir{x}_k) \le \alpha \Mk(\dir{x}_k) \le \frac{1}{2}\Mk( \alpha  \dir{x}_k)$.
\end{fact}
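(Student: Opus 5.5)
Write $H := \grad^2 \barrier(x_k)$, $g := \grad \barrier(x_k)$ and $d := \dir{x}_k$. The plan is to express both $\Mk(\alpha d)$ and $\alpha \Mk(d)$ as explicit quadratics in $\alpha$, and then compare them using only two facts: $H \succeq 0$ (convexity of $\barrier$) and $\delta_k \ge 0$. By~\eqref{eq:search-direction} we have $g = -(H + \delta_k \eye) d$, so
\[
g^\top d = -d^\top H d - \delta_k \|d\|_2^2 .
\]
Set $a := d^\top H d \ge 0$ and $b := \delta_k \|d\|_2^2 \ge 0$. Then
\[
\Mk(\alpha d) = \alpha g^\top d + \tfrac{\alpha^2}{2} a = -\alpha(a+b) + \tfrac{\alpha^2}{2} a .
\]
In particular, taking $\alpha = 1$ gives $\Mk(d) = -\tfrac{a}{2} - b$.

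For the first inequality, compute
\[
\alpha \Mk(d) - \Mk(\alpha d) = -\tfrac{\alpha}{2}a - \alpha b + \alpha a + \alpha b - \tfrac{\alpha^2}{2} a = \tfrac{\alpha(1-\alpha)}{2}\, a .
\]
This is nonnegative for $\alpha \in [0,1]$ because $a \ge 0$. Hence $\Mk(\alpha d) \le \alpha \Mk(d)$.

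For the second inequality, we need $\alpha\Mk(d) \le \tfrac12 \Mk(\alpha d)$, which is equivalent to $2\alpha \Mk(d) - \Mk(\alpha d) \le 0$. Compute
\[
2\alpha\Mk(d) - \Mk(\alpha d) = -\alpha a - 2\alpha b + \alpha a + \alpha b - \tfrac{\alpha^2}{2}a = -\alpha b - \tfrac{\alpha^2}{2} a .
\]
This is nonpositive since $\alpha, a, b \ge 0$. Hence $\alpha \Mk(d) \le \tfrac12 \Mk(\alpha d)$.

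There is no real obstacle here. The only care needed is the algebra, and in particular using the search-direction identity~\eqref{eq:search-direction} to eliminate the linear term $g^\top d$. Convexity enters only through $a \ge 0$ in the first inequality; the second inequality uses just the sign conditions $\alpha, a, b \ge 0$.
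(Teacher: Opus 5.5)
Your proof is correct and is essentially the paper's argument. The paper gets the first inequality from $\alpha^2 \le \alpha$ and $d^\top H d \ge 0$, which is your $\tfrac{\alpha(1-\alpha)}{2}a$. For the second it substitutes $g^\top d = -d^\top H d - \delta_k\|d\|_2^2$ to obtain $\tfrac12\mathcal{M}_k(\alpha d) - \alpha\mathcal{M}_k(d) = \tfrac{\alpha\delta_k}{2}\|d\|_2^2 + \tfrac{\alpha^2}{4}d^\top H d \ge 0$, which is your computation rescaled by $-\tfrac12$.

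One small correction to your closing remark: the second inequality also needs $a = d^\top H d \ge 0$. Convexity is therefore used there as well, not only in the first inequality.
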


\begin{proof}
By convexity we have 
$\Mk(\alpha \dir{x}_k) = \alpha \grad \barrier(x_k)^\top \dir{x}_k + 
\frac{\alpha^2}{2} (\dir{x}_k)^\top \grad^2 \barrier(x_k) \dir{x}_k \le 
\alpha \grad \barrier(x_k)^\top \dir{x}_k + \frac{\alpha}{2} (\dir{x}_k)^\top \grad^2 \barrier(x_k) \dir{x}_k = 
\alpha \Mk(\dir{x}_k)$.
Moreover,
$\grad \barrier(x_k)^\top \dir{x}_k = - (\dir{x}_k)^\top \grad^2 \barrier(x_k) \dir{x}_k - \delta_k \|\dir{x}_k\|^2$.
Thus, $\frac{1}{2}\Mk(\alpha\dir{x}_k)-\alpha\Mk(\dir{x}_k)
=
\frac{\alpha\delta_k}{2}\|\dir{x}_k\|^2+\frac{\alpha^2}{4} (\dir{x}_k)^\top \grad^2 \barrier(x_k)\dir{x}_k
\ge 0$
where the inequality uses $\delta_k\ge 0$ and convexity of $\barrier$.
\end{proof}

\subsection{A warm-up: analysis for unconstrained convex optimization}\label{sec:analysis-our-adaptive-line-search-ipm-unconstrained}

This section analyzes \Cref{alg:IPM} in the unconstrained setting. 
We obtain slightly stronger results in this special setting than we do in 
\Cref{sec:iterations-bound-Fritz-John-points}
which analyzes the constrained setting.
This section also serves as a ``warm-up'' to exhibit the basic idea of our minimum step size 
threshold proof technique
before proceeding to the more technically involved constrained setting analyzed in \Cref{sec:iterations-bound-Fritz-John-points}. 
Dropping the constraints from~\eqref{eq:optimization-constrained-problem} gives $\barrier = f$ and 
therefore the problem is simply to find a stationary point of $f$. 
Moreover, in the absence of constraints, the norm of dual variables in the formula 
$\varepsilon_k =\mu\sqrt{1+\|y_k\|_1}$ is set to zero, i.e., 
$\varepsilon_k =\mu$.
For our analysis, we will assume that the Hessian of $f$ is $\LipHess$-Lipschitz.
A well-known fact \citep[Lemma 1]{nesterov2006cubic} that we will find useful is that, if the Hessian of $f$ is $\LipHess$-Lipschitz, then
\begin{subequations}
\begin{flalign}
\| \grad f(x_k+\dir{x}_k) \|_2 &\le \| \grad \Mk(\dir{x}_k) \|_2 + \frac{\LipHess}{2} \| \dir{x}_k \|_2^2  \label{eq:Lip-grad1-journal-IPM-unconstrained}\\
f(x_k + \alpha \dir{x}_k) &\le f(x_k) + \Mk(\alpha \dir{x}_k) + \frac{\LipHess \alpha^3}{6} \| \dir{x}_k \|_2^3 \quad \forall \alpha \in [0,1] \label{eq:Lip-progress1-journal-IPM-unconstrained} 
\end{flalign}
\end{subequations}
for some $\LipHess \in (0,\infty)$.

\paragraph{Minimum step size threshold.}
Our proof revolves around the following minimum step size threshold,
\begin{align}
\alphaInd{k} := \sqrt{\frac{\eta_1^2 \mu^{3} \min\{6 (1 - \SufficientConstant) \eta_1, 2(1-2\eta_2) \}}{\LipHess (-\mk)^2}}. \label{alpha-min-def-IPM-unconstrained}
\end{align}
Our proof strategy is as follows. 
First, in \Cref{lemma-backtracking-threshold-and-trigger-phase} we show that any step size below this threshold will trigger 
the sufficient decrease condition (\Cref{eq:sufficient-decrease}), allowing 
us to conclude that if $\alphaInd{k} \ge 1$ then $\alpha_k=1$ and if $\alphaInd{k} < 1$ then 
$\alpha_k  \ge \backtrackingFactor \cdot \alphaInd{k}$.
For iterations where $\alphaInd{k} < 1$, \Cref{lemma-function-reduction-using-model-bound-IPM-unconstrained} uses 
the sufficient decrease condition to show the function value reduction per iteration is at least 
$-\alpha_{k} \SufficientConstant \mk \ge -\backtrackingFactor \alphaInd{k} \SufficientConstant \mk = \Omega(\mu^{3/2})$.
Finally, in \Cref{lem:gradient-bound_model-IPM-unconstrained} we show if $\alphaInd{k} \ge 1$ then the algorithm 
terminates on iteration $k$.  
From here, \Cref{thm:bounded-initial-suboptimality-unconstrained-case} uses a standard argument to 
obtain an $O(\mu^{-3/2})$ bound on the number of iterations of the method.

We emphasize this proof technique differs from the standard 
analysis of adaptive second-order methods \cite{cartis2011adaptiveII,curtis2017trust,grapiglia2017regularized,cartis2019universal,hamad2022consistently}.
Standard methods control the norm of the search direction, i.e., $\| \dir{x}_k \|_2$
using either regularization or a trust region radius, and 
ensure sufficient function value reduction at accepted iterates. 
In contrast, our method does not directly control the size of the search direction and
relies on a line search to ensure 
sufficient progress at each iteration. Thus, it is more natural 
for us to lower bound the search direction size.

\begin{lemma} \label{lemma-backtracking-threshold-and-trigger-phase}
Suppose that the problem is unconstrained, i.e., $\barrier(x) = f(x)$ and $\grad^2 f$ is $\LipHess$-Lipschitz. 
Then, if $\alphaInd{k} \ge 1$ then $\alpha_k=1$ and if $\alphaInd{k} < 1$ then $\alpha_k  \ge \backtrackingFactor \cdot \alphaInd{k}$.
\end{lemma}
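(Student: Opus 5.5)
The plan is to first prove a uniform acceptance claim: \emph{every} step size $\alpha \in (0, \min\{1, \alphaInd{k}\}]$ satisfies the sufficient decrease condition~\eqref{eq:sufficient-decrease}. Both conclusions of the lemma then follow from the structure of Condition~\ref{step-size-condition}, in particular the requirement that the larger step $\hat{\alpha}_k$ fails~\eqref{eq:sufficient-decrease-fails-for-larger-step-size}.

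\textbf{Step 1 (acceptance below the threshold).} Fix $\alpha \in (0,\min\{1,\alphaInd{k}\}]$. By~\eqref{eq:Lip-progress1-journal-IPM-unconstrained},
\[
f(x_k+\alpha \dir{x}_k) \le f(x_k) + \Mk(\alpha\dir{x}_k) + \tfrac{\LipHess\alpha^3}{6}\|\dir{x}_k\|_2^3 .
\]
It therefore suffices to show
\[
\tfrac{\LipHess\alpha^3}{6}\|\dir{x}_k\|_2^3 \le (1-\SufficientConstant)\bigl(-\Mk(\alpha\dir{x}_k)\bigr).
\]
Fact~\ref{lem:model-alpha-k-bound} gives $-\Mk(\alpha \dir{x}_k) \ge -\alpha \mk$. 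So it is enough to prove
\[
\LipHess\alpha^2\|\dir{x}_k\|_2^3 \le 6(1-\SufficientConstant)(-\mk).
\]
Next I use~\eqref{eq:direction-condition-eta1} with $\varepsilon_k=\mu$ in the unconstrained case. It gives $\|\dir{x}_k\|_2 \le -\mk/(\eta_1\mu)$. Substituting, the sufficient condition becomes
\[
\alpha^2 \le \frac{6(1-\SufficientConstant)\eta_1^3\mu^3}{\LipHess(-\mk)^2}.
\]
This holds because $\alpha\le\alphaInd{k}$ and the minimum in~\eqref{alpha-min-def-IPM-unconstrained} is at most $6(1-\SufficientConstant)\eta_1$. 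Here $-\mk>0$, since $\grad f(x_k)\neq 0$ by Condition~\ref{condition:search-dir}.

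The only slightly delicate point in this step is the chaining of inequalities. Using Fact~\ref{lem:model-alpha-k-bound} lets one $\alpha$ factor cancel, so the remaining requirement is quadratic in $\alpha$, which matches the square root in $\alphaInd{k}$. The second term $2(1-2\eta_2)$ of the minimum is not needed here; I expect it is reserved for the termination lemma.

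\textbf{Step 2 (case $\alphaInd{k}\ge 1$).} By Step 1, $\alpha=1$ satisfies~\eqref{eq:sufficient-decrease}. Suppose $\alpha_k<1$. Then $\hat{\alpha}_k=\min\{1,\alpha_k/\backtrackingFactor\}\in(0,1]\subseteq(0,\min\{1,\alphaInd{k}\}]$. So $\hat{\alpha}_k$ satisfies sufficient decrease, contradicting~\eqref{eq:sufficient-decrease-fails-for-larger-step-size}. Hence $\alpha_k=1$.

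\textbf{Step 3 (case $\alphaInd{k}<1$).} If $\alpha_k=1$, the bound $\alpha_k \ge \backtrackingFactor\alphaInd{k}$ is immediate. Otherwise suppose, for contradiction, that $\alpha_k<\backtrackingFactor\alphaInd{k}$. Then $\alpha_k/\backtrackingFactor<\alphaInd{k}<1$, so $\hat{\alpha}_k=\alpha_k/\backtrackingFactor$ lies in $(0,\alphaInd{k})$. By Step 1, $\hat{\alpha}_k$ satisfies sufficient decrease, again contradicting~\eqref{eq:sufficient-decrease-fails-for-larger-step-size}. Thus $\alpha_k\ge\backtrackingFactor\alphaInd{k}$.
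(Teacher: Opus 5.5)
Your proposal is correct and follows the paper's route. You use the same chain as the paper: the cubic Taylor bound, \eqref{eq:direction-condition-eta1} with $\varepsilon_k=\mu$, the $6(1-\SufficientConstant)\eta_1$ branch of the minimum, and Fact~\ref{lem:model-alpha-k-bound}, to show every $\alpha\in(0,\min\{1,\alphaInd{k}\}]$ satisfies \eqref{eq:sufficient-decrease}, followed by the same contradiction with \eqref{eq:sufficient-decrease-fails-for-larger-step-size}. Your Step 2 is slightly more careful than the paper, which only notes that $\alpha=1$ satisfies Condition~\ref{step-size-condition} without explicitly ruling out an accepted $\alpha_k<1$.
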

\begin{proof}
Suppose that $\alpha \in (0,1]$ satisfies $\alpha \leq \alphaInd{k}$. Then,
\[
\frac{\alpha^3 \LipHess \|\dir{x}_k\|_2 ^ 3}{6} \le \frac{\alpha \alphaInd{k}^2 \LipHess \|\dir{x}_k\|_2 ^ 3}{6} \underle{(a)} 
\alpha \frac{\LipHess \alphaInd{k}^2 (\eta_1 \mu)^{-3} (-\mk)^3}{6} \underle{(b)} (1- \SufficientConstant)  \alpha (-\mk)  
\underle{(c)} (1 - \SufficientConstant) (-\Mk( \alpha \dir{x}_k))
\]
where $(a)$ uses \Cref{eq:direction-condition-eta1}, $(b)$ uses \Cref{alpha-min-def-IPM-unconstrained}, and $(c)$ uses \Cref{lem:model-alpha-k-bound}. Combining this inequality with \Cref{eq:Lip-progress1-journal-IPM-unconstrained} it follows that 
\[
f(x_k + \alpha \dir{x}_k) - f(x_k) \le \Mk(\alpha \dir{x}_k)
+ \frac{\alpha ^ 3\LipHess}{6} \| \dir{x}_k \|_2^3
\le  
\SufficientConstant \cdot \Mk(\alpha \dir{x}_k),
\]
i.e., the sufficient progress condition (\Cref{eq:sufficient-decrease}) holds. 
We conclude that $\alpha \in (0,1]$ and $\alpha \leq \alphaInd{k}$ implies \Cref{eq:sufficient-decrease} holds. 

This conclusion allows us to deduce the desired result.
If $\alphaInd{k}\ge 1$, then the step size $\alpha=1$ satisfies Condition~\ref{step-size-condition}, and so $\alpha_k=1$. 
If $\alphaInd{k}<1$, then $\alpha_k \ge \backtrackingFactor \alphaInd{k}$; otherwise $\alpha = \alpha_k/\backtrackingFactor<\alphaInd{k}$ 
would also satisfy \Cref{eq:sufficient-decrease}, contradicting the second part of Condition~\ref{step-size-condition}. 
\end{proof}

\begin{lemma}\label{lemma-function-reduction-using-model-bound-IPM-unconstrained}
Suppose that the problem is unconstrained, i.e., $\barrier(x) = f(x)$ and $\grad^2 f$ is $\LipHess$-Lipschitz. 
If $\alphaInd{k} < 1$ then
	\begin{flalign*}
		f(x_k) - f(x_{k+1}) \geq \hat{C} \cdot \LipHess^{-1/2} \mu^{3/2}
	\end{flalign*}
	where $\hat{C} := \backtrackingFactor \SufficientConstant \eta_1 \cdot \min\{ \sqrt{6(1- \SufficientConstant) \eta_1} , \sqrt{2(1-2\eta_2)}\}$ is a problem-independent constant.
\end{lemma}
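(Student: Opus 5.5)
The plan is to chain the sufficient decrease condition, the step size lower bound from \Cref{lemma-backtracking-threshold-and-trigger-phase}, and the explicit formula~\eqref{alpha-min-def-IPM-unconstrained} for $\alphaInd{k}$. The point is that $\alphaInd{k}$ scales like $(-\mk)^{-1}$, so the product $\alphaInd{k}(-\mk)$ no longer depends on the unknown model decrease $-\mk$.

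\textbf{Step 1: reduce the progress to $\alpha_k(-\mk)$.} Since $x_{k+1} = x_k + \alpha_k \dir{x}_k$, the sufficient decrease condition~\eqref{eq:sufficient-decrease} gives
\[
f(x_k) - f(x_{k+1}) \ge \SufficientConstant\,(-\Mk(\alpha_k \dir{x}_k)).
\]
By \Cref{lem:model-alpha-k-bound}, $\Mk(\alpha_k \dir{x}_k) \le \alpha_k \mk$, so $-\Mk(\alpha_k\dir{x}_k) \ge \alpha_k(-\mk)$. Note that $-\mk \ge \eta_1 \mu \|\dir{x}_k\|_2 > 0$ by~\eqref{eq:direction-condition-eta1}; here $\dir{x}_k \neq 0$ because $\grad f(x_k) \neq 0$ under Condition~\ref{condition:search-dir}. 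Hence
\[
f(x_k)-f(x_{k+1}) \ge \SufficientConstant\, \alpha_k (-\mk).
\]

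\textbf{Step 2: use the step size lower bound.} Since $\alphaInd{k} < 1$, \Cref{lemma-backtracking-threshold-and-trigger-phase} gives $\alpha_k \ge \backtrackingFactor\, \alphaInd{k}$. Therefore
\[
f(x_k)-f(x_{k+1}) \ge \backtrackingFactor \SufficientConstant\, \alphaInd{k}(-\mk).
\]

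\textbf{Step 3: substitute the definition of $\alphaInd{k}$.} Plugging in~\eqref{alpha-min-def-IPM-unconstrained}, the factor $(-\mk)^2$ under the square root cancels against $(-\mk)$:
\[
\alphaInd{k}(-\mk) = \eta_1 \mu^{3/2} \LipHess^{-1/2}\sqrt{\min\{6(1-\SufficientConstant)\eta_1,\, 2(1-2\eta_2)\}}.
\]
Finally, use $\sqrt{\min\{a,b\}} = \min\{\sqrt a,\sqrt b\}$ to recover exactly $\hat C \LipHess^{-1/2}\mu^{3/2}$.

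There is no real obstacle here. The only care needed is the sign bookkeeping when passing from $\Mk(\alpha_k\dir{x}_k)$ to $\alpha_k \mk$ via \Cref{lem:model-alpha-k-bound}.
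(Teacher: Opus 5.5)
Your proposal is correct and matches the paper's proof essentially line for line. The paper also combines the sufficient decrease condition with \Cref{lem:model-alpha-k-bound} to get $f(x_k)-f(x_{k+1}) \ge \SufficientConstant\alpha_k(-\mk)$, applies $\alpha_k \ge \backtrackingFactor\alphaInd{k}$ from \Cref{lemma-backtracking-threshold-and-trigger-phase}, and substitutes the definition of $\alphaInd{k}$ so that the factor $(-\mk)$ cancels; your explicit check that $-\mk>0$ is bookkeeping the paper leaves implicit.
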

\begin{proof}
By $(a)$ equation~\eqref{eq:sufficient-decrease} and \Cref{lem:model-alpha-k-bound}, 
$(b)$, \Cref{lemma-backtracking-threshold-and-trigger-phase} with $\alphaInd{k} < 1$, we get
\begin{flalign*}
f(x_k) - f(x_{k+1}) &\underge{(a)}  -\SufficientConstant \cdot \alpha_{k} \mk 
\underge{(b)} -\backtrackingFactor \SufficientConstant \alphaInd{k} \mk  
= \backtrackingFactor \SufficientConstant  \sqrt{\frac{\eta_1^2 \mu^{3} \min\{6 (1 - \SufficientConstant) \eta_1, 2(1-2\eta_2) \}}{\LipHess (-\mk)^2}} \cdot (-\mk)  \\
&= \hat{C} \LipHess^{-1/2} \mu^{3/2}.
\end{flalign*}
\end{proof}

\begin{lemma} \label{lem:gradient-bound_model-IPM-unconstrained}
Suppose that the problem is unconstrained, i.e., $\barrier(x) = f(x)$ and $\grad^2 f$ is $\LipHess$-Lipschitz. 
If $\alphaInd{k} \ge 1$ then $\| \grad f(x_{k+1}) \|_2  \le \mu$.
\end{lemma}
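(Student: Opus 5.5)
By \Cref{lemma-backtracking-threshold-and-trigger-phase}, $\alphaInd{k}\ge 1$ forces $\alpha_k=1$, so $x_{k+1}=x_k+\dir{x}_k$. I would then apply \Cref{eq:Lip-grad1-journal-IPM-unconstrained}. Since $\grad \Mk(\dir{x}_k)=\grad f(x_k)+\grad^2 f(x_k)\dir{x}_k=-\delta_k \dir{x}_k$ by \Cref{eq:search-direction}, this gives
\[
\|\grad f(x_{k+1})\|_2 \le \delta_k\|\dir{x}_k\|_2 + \frac{\LipHess}{2}\|\dir{x}_k\|_2^2 .
\]
The plan is to bound each term by a fraction of $\mu$ so that together they are at most $\mu$.

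\emph{First term.} The step $\alpha=1$ satisfies the sufficient decrease condition \eqref{eq:sufficient-decrease}. Hence the first alternative in \eqref{eq:direction-condition-eta2} fails, and the second must hold: $\delta_k\|\dir{x}_k\|_2\le 2\eta_2\varepsilon_k=2\eta_2\mu$. (In the unconstrained case $\varepsilon_k=\mu$.)

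\emph{Second term.} I use $\alphaInd{k}\ge1$ together with \eqref{eq:direction-condition-eta1}. From $\alphaInd{k}\ge 1$ and the definition \eqref{alpha-min-def-IPM-unconstrained},
\[
\LipHess(-\mk)^2\le \eta_1^2\mu^3\cdot 2(1-2\eta_2).
\]
From \eqref{eq:direction-condition-eta1}, $\|\dir{x}_k\|_2\le (-\mk)/(\eta_1\mu)$, so
\[
\frac{\LipHess}{2}\|\dir{x}_k\|_2^2\le \frac{\LipHess(-\mk)^2}{2\eta_1^2\mu^2}\le (1-2\eta_2)\mu .
\]
Adding the two bounds gives $\|\grad f(x_{k+1})\|_2\le 2\eta_2\mu+(1-2\eta_2)\mu=\mu$. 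This also explains why the term $2(1-2\eta_2)$ appears inside the min in the definition of $\alphaInd{k}$.

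The step I expect to need the most care is the first one: the case analysis in \eqref{eq:direction-condition-eta2}, using that a unit step was accepted to rule out the first alternative. This is exactly where acceptance of $\alpha_k=1$ (from the previous lemma) is needed. The rest is direct substitution.
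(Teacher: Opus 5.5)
Your proof is correct and matches the paper's argument step for step: $\alpha_k=1$ via \Cref{lemma-backtracking-threshold-and-trigger-phase}, then the gradient Taylor bound \eqref{eq:Lip-grad1-journal-IPM-unconstrained} with $\grad \Mk(\dir{x}_k) = -\delta_k \dir{x}_k$. After that come the second alternative of \eqref{eq:direction-condition-eta2}, forced by acceptance of the unit step, and the bound $\|\dir{x}_k\|_2 \le (-\mk)/(\eta_1\mu)$ from \eqref{eq:direction-condition-eta1} combined with $\alphaInd{k} \ge 1$. Your direct rearrangement $\LipHess(-\mk)^2 \le 2(1-2\eta_2)\eta_1^2\mu^3$ is just a slightly cleaner way of writing the paper's last two inequalities, which multiply by $\alphaInd{k}^2 \ge 1$ and then substitute its definition.
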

\begin{proof}
Since $\alphaInd{k} \ge 1$, \Cref{lemma-backtracking-threshold-and-trigger-phase} implies $\alpha_k=1$.
Moreover,
\begin{flalign*}
\| \grad f(x_{k+1}) \|_2  &= \| \grad f(x_k + \dir{x}_k) \|_2 \underle{(a)} \| \grad \Mk(\dir{x}_k) \|_2 + \frac{\LipHess}{2} \| \dir{x}_k \|_2^2 \undereq{(b)} \delta_k \|\dir{x}_k\|_2 + \frac{\LipHess}{2} \| \dir{x}_k \|_2^2 \\
&\underle{(c)} 2\eta_2 \mu + \frac{\LipHess}{2} \| \dir{x}_k \|_2^2 
\underle{(d)} 2\eta_2 \mu + \frac{\LipHess}{2} (\eta_1 \mu)^{-2} \cdot (-\mk)^2 \le 2\eta_2 \mu + \frac{\LipHess}{2} (\eta_1 \mu)^{-2} \cdot (-\mk)^2 \alphaInd{k}^2 \\
&\le 2 \eta_2 \mu + \frac{\LipHess}{2} (\eta_1 \mu)^{-2} \cdot (-\mk)^2 \cdot \frac{\eta_1^2 \mu^{3}  2(1-2\eta_2)}{\LipHess (-\mk)^2}
 \le \mu
\end{flalign*}
where inequality $(a)$ uses \Cref{eq:Lip-grad1-journal-IPM-unconstrained}, 
equality $(b)$ uses $\| \grad \Mk(\dir{x}_k) \|_2 = \delta_k \|\dir{x}_k\|_2$, 
inequality $(c)$ uses \Cref{eq:direction-condition-eta2} from Condition~\ref{condition:search-dir} and the fact that $\alpha_k = 1$, 
and inequality $(d)$ uses the upper bound on $\| \dir{x}_k \|$ given in \Cref{eq:direction-condition-eta1}. 	
\end{proof}

\begin{theorem}\label{thm:bounded-initial-suboptimality-unconstrained-case}
Suppose that the problem is unconstrained, i.e., $\barrier(x) = f(x)$ and $\grad^2 f$ is $\LipHess$-Lipschitz.
Let $\hat{C} := \backtrackingFactor \SufficientConstant \eta_1 \cdot \min\{ \sqrt{6(1- \SufficientConstant) \eta_1} , \sqrt{2(1-2\eta_2)}\}$ be a problem-independent constant. 
Then, \Cref{alg:IPM} terminates after at most 
	\begin{flalign*}
		K \le 1+\hat{C} ^ {-1} \cdot \frac{(f(x_1) - f_\star) \LipHess ^ {1/2}}{\mu ^ {3/2}}
	\end{flalign*}
iterations.
\end{theorem}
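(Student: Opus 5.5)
The argument is a telescoping count that combines the three preceding lemmas. Suppose the algorithm has not terminated by the end of iteration $K-1$; we bound $K$. For every iteration $k \le K-1$ that is actually executed, one of two cases holds: $\alphaInd{k} \ge 1$ or $\alphaInd{k} < 1$.

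First consider $\alphaInd{k} \ge 1$. By \Cref{lem:gradient-bound_model-IPM-unconstrained}, $\|\grad f(x_{k+1})\|_2 \le \mu$. In the unconstrained setting Termination check II must then stop the method. The dual variables are vacuous, so \eqref{eq:first-order-SIP-full:feasible} and \eqref{eq:first-order-SIP-full:centrality} hold trivially, and \eqref{eq:first-order-SIP-full:grad-lag} reduces to $\|\grad f(x_{k+1})\|_2 \le \mu = \varepsilon_k$. Hence every iteration $k$ after which the algorithm continues must have $\alphaInd{k} < 1$. We also need Condition~\ref{condition:search-dir} to be satisfiable at each executed iteration. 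This holds because Termination check I failing gives $\|\grad f(x_k)\|_2 > \mu = \varepsilon_k$.

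Now consider the iterations $k = 1,\dots,K-1$, none of which triggered termination. For each of them $\alphaInd{k} < 1$, so \Cref{lemma-function-reduction-using-model-bound-IPM-unconstrained} gives $f(x_k) - f(x_{k+1}) \ge \hat{C}\LipHess^{-1/2}\mu^{3/2}$. Summing over these $K-1$ iterations telescopes to
\[
(K-1)\,\hat{C}\LipHess^{-1/2}\mu^{3/2} \le f(x_1) - f(x_K) \le f(x_1) - f_\star .
\]
Rearranging yields $K \le 1 + \hat{C}^{-1}(f(x_1)-f_\star)\LipHess^{1/2}\mu^{-3/2}$.

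The main subtlety is the bookkeeping of which iteration performs the final step. The last iteration $K$ may be one where $\alphaInd{K} \ge 1$, or one where termination happens at check I or II for another reason. This is why that iteration is excluded from the sum and accounts for the "$+1$". If $f_\star = -\infty$ the bound is vacuous, so we may assume $f_\star$ is finite. Beyond this, everything follows directly from the earlier lemmas, so I expect no real obstacle.
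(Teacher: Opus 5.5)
Your proposal is correct and follows essentially the same argument as the paper: the contrapositive of \Cref{lem:gradient-bound_model-IPM-unconstrained} (via Termination check II) forces $\alphaInd{k} < 1$ on every iteration $k \le K-1$, and \Cref{lemma-function-reduction-using-model-bound-IPM-unconstrained} plus telescoping against $f_\star$ gives the bound. Your extra remarks, that the SIP conditions reduce to $\|\grad f(x_{k+1})\|_2 \le \mu$ when there are no constraints and that a failed Termination check I makes Condition~\ref{condition:search-dir} satisfiable, are correct details the paper leaves implicit.
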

\begin{proof}
Consider some iteration $K$ of the algorithm, since the algorithm 
has not terminated across iterations $k = 1, \dots, K-1$,
by the contrapositive of \Cref{lem:gradient-bound_model-IPM-unconstrained}
we deduce that $\alphaInd{k} < 1$ for $k \in [1,K-1]$.
Thus, by \Cref{lemma-function-reduction-using-model-bound-IPM-unconstrained}  and a telescoping sum, we get:
\begin{flalign*}
f(x_1) - f(x_{K}) \ge \sum_{k=1}^{K-1} (f(x_k) - f(x_{k+1})) \geq \sum_{k=1}^{K-1} \hat{C} \cdot \LipHess^{-1/2} \mu^{3/2} = (K-1) \cdot \hat{C} \cdot \LipHess^{-1/2} \mu^{3/2}.
\end{flalign*}
Rearranging the above inequality and using $f(x_{K}) \ge f_\star$ gives the desired result.
\end{proof}

\begin{remark}
In convex optimization, it is well known that if the initial sublevel set is bounded by $D$, i.e., 
$D = \diam(\{ x \in \R^{\NumVar} : f(x) \le f(x_1) \})$ then the iteration bound for cubic regularized Newton to find an $\epsilon$-approximate minimizer is 
$O(\LipHess^{1/2} D^{3/2} \epsilon^{-1/2})$ \cite{nesterov2006cubic}. This may be preferable to \Cref{thm:bounded-initial-suboptimality-unconstrained-case}
which obtains a worse $\mu^{-3/2}$ scaling\footnote{Although the assumptions are slightly different, in particular, \Cref{thm:bounded-initial-suboptimality-unconstrained-case} assumes only that the initial suboptimality is bounded.}. 
However, in \Cref{thm:unconstrained-case} we show we can 
match this bound of cubic regularized Newton by annealing $\mu$.
Additionally, it is well-known that many methods \cite{nesterov2006cubic,cartis2011adaptiveII,curtis2017trust} can achieve a similar result to \Cref{thm:bounded-initial-suboptimality-unconstrained-case} without 
a convexity assumption.
\end{remark}

\subsection{Lemmas on local approximations and direction sizes}\label{sec:lemmas-local-approximation-direction-size}

This section lists a series of useful lemmas from \citet{hinder2024worst} for analyzing log barrier methods. 
They depend on the following assumption.
 
\begin{assumption}[Lipschitz derivatives]\label{assume-lip-deriv}
	The functions $f$ and each $\cons_i$ for $i \in [m]$ are thrice differentiable on $\reals^{\NumVar}$.
	Let $\LipGrad, \LipHess \in (0,\infty)$.
	On the set $\X$, $f$ and each  $\cons_i$ have $\LipGrad$-Lipschitz first derivatives and $\LipHess$-Lipschitz second derivatives.
\end{assumption}

\Cref{lemPrimalDualApproxResultTwo} bounds the gap between 
the predicted reduction from the second order model $\Mk$ and the actual reduction of the log barrier $\barrier$. This allows us to establish a lower bound on the actual reduction in the log barrier similar to \Cref{lemma-function-reduction-using-model-bound-IPM-unconstrained} in the unconstrained setup.
Recall from \Cref{eq:define-all-algorithm-variables} that 
\begin{flalign*}
&s_k := \cons(x_k), \quad  y_k := \mu S_k^{-1} \ones, \quad  S_k := \diag(s_k), \quad  Y_k = \diag(y_k), \\
&\dir{s}_k := \grad \cons(x_k) \dir{x}_k, \quad \dir{y}_k :=-Y_k S_k^{-1} \dir{s}_k
\end{flalign*}
and that $S_k y_k + S_k \dir{y}_k + Y_k \dir{s}_k = \mu \ones$. Also, by definition of \Cref{alg:IPM} we have 
$x_k \in \interior{\X}$.

\begin{lemma}\cite[Lemma 5]{hinder2024worst}\label{lemPrimalDualApproxResultTwo}
	Suppose Assumption~\ref{assume-lip-deriv} holds (Lipschitz derivatives). Let $\alpha \in [0,1]$, and 
	\begin{flalign}\label{eq:kappa-requirement}
	\kappa_{\alpha} := \| S_k^{-1} \dir{s}_k \|_{2} \alpha +  \frac{\LipGrad \| \dir{x}_k \|_2^2  \| y_k \|_2 \alpha^2}{\mu}.
	\end{flalign}
	If $\kappa_{\alpha} \in (0, 1/4]$ then {$\Convex\{ x_k, x_k + \alpha \dir{x}_k \} \subseteq \X$} and
	\begin{flalign}\label{eq:progress-barrier-error}
| \barrier(x_k) + \Mk(\alpha \dir{x}_k) -  \barrier(x_k + \alpha \dir{x}_k) | \le  \frac{\LipHess ( 1 +  2 \| y_k \|_1 ) \| \dir{x}_k \|_2^3 \alpha^3}{6}  + {\frac{4 \LipGrad \| \dir{x}_k \|_2^2 \| y_k \|_1 \kappa_{\alpha} \alpha^2 }{3}} + \frac{5 \mu \kappa_{\alpha}^3}{3}.
	\end{flalign}
\end{lemma}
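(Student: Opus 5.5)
The approach is to split the barrier into $f$ and $-\mu\sum_i\log(\cons_i)$ and bound the Taylor remainder of each along the segment $x(t) := x_k + t\alpha\dir{x}_k$, $t\in[0,1]$. The first task is to keep this segment inside $\interior{\X}$ and to control each constraint value. Write $\cons_i(x(t)) = s_{k,i} + t\alpha\dir{s}_{k,i} + r_i(t)$, where Assumption~\ref{assume-lip-deriv} gives $|r_i(t)|\le \frac{\LipGrad}{2}t^2\alpha^2\|\dir{x}_k\|_2^2$. Dividing by $s_{k,i}$ and using $1/s_{k,i} = y_{k,i}/\mu \le \|y_k\|_2/\mu$, the relative deviation satisfies $|\cons_i(x(t))/s_{k,i} - 1| \le \|S_k^{-1}\dir{s}_k\|_2\alpha + \frac{\LipGrad\|\dir{x}_k\|_2^2\|y_k\|_2\alpha^2}{2\mu} \le \kappa_\alpha \le 1/4$. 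By continuity in $t$ this bound holds along the whole path, so $\cons(x(t))>\zeros$. This gives $\Convex\{x_k,x_k+\alpha\dir{x}_k\}\subseteq\X$, and also $\cons_i(x(t))\in[\tfrac34 s_{k,i},\tfrac54 s_{k,i}]$.

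Next, decompose the error. The model $\Mk$ uses $\grad^2\barrier = \grad^2 f - \sum_i \frac{\mu}{s_{k,i}}\grad^2\cons_i + \mu\sum_i \frac{\grad\cons_i\grad\cons_i^\top}{s_{k,i}^2}$. Write $\barrier(x_k+\alpha\dir{x}_k) - \barrier(x_k) - \Mk(\alpha\dir{x}_k)$ as the sum of three pieces.
\begin{enumerate}[label=(\roman*)]
\item The second-order Taylor remainder of $f$ is bounded by $\frac{\LipHess}{6}\|\dir{x}_k\|_2^3\alpha^3$.
\item Expand $-\mu\log(\cons_i(x(1)))$ as $-\mu\log s_{k,i} - \mu\log(1+u_i)$ with $u_i := (\cons_i(x(1))-s_{k,i})/s_{k,i}$, where $|u_i|\le\kappa_\alpha$. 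Then use $|\log(1+u)-u+u^2/2|\le \frac{|u|^3}{3(1-|u|)^3}$, or a cruder cubic bound, valid for $|u|\le 1/4$. This yields the $\frac{5\mu}{3}\kappa_\alpha^3$ term after summing, since $\sum_i|u_i|^3\le(\sum u_i^2)^{3/2}$ and $\|u\|_2 \lesssim \kappa_\alpha$. The constant comes from $(4/3)^3$-type factors; I would tune the inequality to get $5/3$.
\item The linear part $-\mu u_i$ differs from the model terms $-\mu\alpha\dir{s}_{k,i}/s_{k,i} - \frac{\mu\alpha^2}{2s_{k,i}}(\dir{x}_k)^\top\grad^2\cons_i\dir{x}_k$ by $\frac{\mu}{s_{k,i}}$ times the cubic Taylor remainder of $\cons_i$, which is at most $y_{k,i}\frac{\LipHess}{6}\|\dir{x}_k\|^3\alpha^3$. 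Summing gives $\frac{\LipHess\|y_k\|_1}{6}\|\dir{x}_k\|^3\alpha^3$.
\end{enumerate}
The remaining term is the quadratic mismatch $\frac{\mu}{2}(u_i^2 - (\alpha\dir{s}_{k,i}/s_{k,i})^2)$. Writing $u_i = a_i + b_i$ with $a_i = \alpha\dir{s}_{k,i}/s_{k,i}$ and $|b_i|\le \frac{\LipGrad\alpha^2\|\dir{x}_k\|^2 y_{k,i}}{2\mu}$, this is at most $\frac{\mu}{2}|b_i|(2|a_i|+|b_i|)$. Summing over $i$, and using $|a_i|+|b_i|\lesssim\kappa_\alpha$, yields a term of the form $\LipGrad\|\dir{x}_k\|^2\|y_k\|_1\kappa_\alpha\alpha^2$ with constant at most $4/3$. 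The Hessian error then collects to $\frac{\LipHess(1+2\|y_k\|_1)}{6}$ by absorbing the slack from using $\log$ bounds on the $\frac34$-shrunk interval.

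The main obstacle is the bookkeeping of constants: obtaining exactly $4/3$, $5/3$, and the factor $2\|y_k\|_1$, which arises because the $\cons_i$ Taylor remainder gets multiplied by $\mu/\cons_i(x(t))\le \frac43 y_{k,i}$ if done via the integral form. The first thing I would try is the integral form of the remainder of $t\mapsto\barrier(x(t))$, i.e.\ bounding $\frac{d^3}{dt^3}$ using the interval bounds on $\cons_i(x(t))$. If that overshoots, I would fall back to the scalar expansion of $\log(1+u)$ above. Either route gives the claimed form up to these constants.
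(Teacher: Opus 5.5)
The paper contains no proof of this lemma; it imports the statement verbatim from \cite[Lemma 5]{hinder2024worst}. So your argument has to stand on its own, and it does. The scalar-expansion route (your items (i)--(iii) plus the quadratic-mismatch step) is a complete proof, and it in fact gives constants smaller than those claimed.
\begin{itemize}
\item \textbf{Hessian term.} The cubic Taylor remainder of $a_i$ is multiplied by $\mu/s_{k,i}=y_{k,i}$, which is evaluated exactly at $x_k$ and not at an intermediate point. It therefore contributes $\frac{L_H y_{k,i}}{6}\|d^x_k\|_2^3\alpha^3$, and with the $f$ remainder the total is $\frac{L_H(1+\|y_k\|_1)}{6}\|d^x_k\|_2^3\alpha^3$, with no factor $2$.
\item \textbf{Quadratic mismatch.} Write $u_i=\alpha d^s_{k,i}/s_{k,i}+b_i$ with $|b_i|\le \frac{L_G\alpha^2\|d^x_k\|_2^2 y_{k,i}}{2\mu}$. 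The mismatch is at most $\frac{\mu}{2}|b_i|\,(2|\alpha d^s_{k,i}/s_{k,i}|+|b_i|)\le \mu|b_i|\kappa_\alpha$, which sums to $\frac12 L_G\|d^x_k\|_2^2\|y_k\|_1\kappa_\alpha\alpha^2$.
\item \textbf{Log remainder.} You have $\|u\|_2\le \kappa_\alpha\le 1/4$ exactly, not just up to constants. Hence the remainder bound $\frac{|u|^3}{3(1-|u|)^3}\le \frac{64}{81}|u|^3$ gives $\frac{64}{81}\mu\kappa_\alpha^3$.
\end{itemize}
No tuning or ``absorbing slack'' is needed: the stated bound follows at once because each of your constants is at most the claimed one. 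The integral-form fallback, where $\mu/a_i(x(t))\le \frac43 y_{k,i}$ inflates the constants, is unnecessary.

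The one step you should make explicit is feasibility. Assumption~\ref{assume-lip-deriv} gives Lipschitz derivatives only on $\X$, so your bound $|r_i(t)|\le \frac{L_G}{2}t^2\alpha^2\|d^x_k\|_2^2$ is available only once the segment from $x_k$ to $x(t)$ is known to lie in $\X$. Your phrase \emph{by continuity in $t$} should be spelled out as the usual bootstrap:
\begin{itemize}
\item Let $t^\star$ be the largest $t\in[0,1]$ with $x(\tau)\in\X$ for all $\tau\in[0,t]$. This set is closed because $a$ is continuous.
\item On $[0,t^\star]$ your estimate applies and gives $a_i(x(t))\ge \frac34 s_{k,i}>0$.
\item If $t^\star<1$, continuity would keep $a(x(t))>0$ slightly beyond $t^\star$, contradicting maximality. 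Hence $t^\star=1$.
\end{itemize}
Only after this do the Taylor bounds you use in (i)--(iii) become legitimate along the whole segment.
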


\Cref{lem:complementary-and-gradient-Lagrangian-error} allows us to establish when we have found an 
approximate stationary interior point, i.e., \Cref{eq:first-order-SIP-full:first-order} holds.

\begin{lemma}\cite[Lemma 6 \& 7]{hinder2024worst}\label{lem:complementary-and-gradient-Lagrangian-error}
	Suppose Assumption~\ref{assume-lip-deriv} holds (Lipschitz derivatives). Let $\hat{x}_{k+1} = x_{k} + \dir{x}_k$, $\hat{y}_{k+1} = y_k + \dir{y}_k$, $\Convex\{x_k, \hat{x}_{k+1} \} \subseteq \X$, $\hat{S}_{k+1} = \diag(\cons(\hat{x}_{k+1}))$, then
	\begin{flalign}
		\| \hat{S}_{k+1} \hat{y}_{k+1} - \mu \ones \|_{2} &\le  \mu \| S_k^{-1} \dir{s}_k \|_{2}^2 + \frac{\LipGrad}{2}  \| y_k \|_{2} (1 + \| S_k^{-1} \dir{s}_k \|_{2}) \| \dir{x}_k \|_2^2.
		\label{eq:new-general-comp-bound}
	\end{flalign}
Moreover, 
\begin{equation}\label{eq:LagError}
\begin{split}
&\| \grad_{x} \Lag( x_k , y_k ) + \grad_{xx}^2 \Lag (x_k, y_k)^\top  \dir{x}_k - (\grad \cons(x_k))^\top \dir{y}_k   - \grad_{x} \Lag(\hat{x}_{k+1}, \hat{y}_{k+1})
\|_{2} \\
&\le \LipGrad \| y_k \|_2  \| \dir{x}_k \|_2 \| S_k^{-1} \dir{s}_k \|_2 + \frac{\LipHess}{2}  (\| y_k \|_1 + 1) \|  \dir{x}_k \|^2_2.
\end{split}
\end{equation}
\end{lemma}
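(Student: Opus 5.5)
Both parts of \Cref{lem:complementary-and-gradient-Lagrangian-error} follow by direct expansion of $\cons$ and $\grad_x \Lag$ around $x_k$, using Taylor's theorem with the Lipschitz bounds of Assumption~\ref{assume-lip-deriv} on the segment $\Convex\{x_k,\hat{x}_{k+1}\}\subseteq\X$. Both parts also rely on the identity $S_k \dir{y}_k + Y_k \dir{s}_k = \mu\ones - S_k y_k = \zeros$.

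\emph{Complementarity bound.} Write $\hat{s}_{k+1} := \cons(\hat{x}_{k+1}) = s_k + \dir{s}_k + r$, where $r_i := \cons_i(\hat{x}_{k+1}) - \cons_i(x_k) - \grad \cons_i(x_k)^\top \dir{x}_k$. By the $\LipGrad$-Lipschitz gradient of $\cons_i$ on the segment, $|r_i| \le \frac{\LipGrad}{2}\|\dir{x}_k\|_2^2$. Then, componentwise,
\[
\hat{s}_{k+1,i}\hat{y}_{k+1,i} - \mu = (s_{k,i} + \dir{s}_{k,i})(y_{k,i} + \dir{y}_{k,i}) - \mu + r_i \hat{y}_{k+1,i}.
\]
Since $s_{k,i} y_{k,i} = \mu$ and $s_{k,i}\dir{y}_{k,i} + y_{k,i}\dir{s}_{k,i} = 0$, the first term collapses to $\dir{s}_{k,i}\dir{y}_{k,i}$. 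Using $\dir{y}_{k,i} = -y_{k,i}\dir{s}_{k,i}/s_{k,i}$, this equals $-\mu (\dir{s}_{k,i}/s_{k,i})^2$. The vector of these terms has $2$-norm at most $\mu \|S_k^{-1}\dir{s}_k\|_4^2 \le \mu\|S_k^{-1}\dir{s}_k\|_2^2$. For the remainder, $\hat{y}_{k+1,i} = y_{k,i}(1 - \dir{s}_{k,i}/s_{k,i})$, so $|\hat{y}_{k+1,i}| \le y_{k,i}(1 + \|S_k^{-1}\dir{s}_k\|_\infty)$. Hence
\[
\|r \circ \hat{y}_{k+1}\|_2 \le \tfrac{\LipGrad}{2}\|\dir{x}_k\|_2^2 \,\|y_k\|_2\,(1 + \|S_k^{-1}\dir{s}_k\|_2).
\]
The triangle inequality then gives \eqref{eq:new-general-comp-bound}.

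\emph{Lagrangian gradient bound.} Expand
\[
\grad_x \Lag(\hat{x}_{k+1},\hat{y}_{k+1}) = \grad f(\hat{x}_{k+1}) - \grad \cons(\hat{x}_{k+1})^\top (y_k + \dir{y}_k).
\]
Split this as $\grad_x\Lag(\hat{x}_{k+1}, y_k) - \grad\cons(\hat{x}_{k+1})^\top \dir{y}_k$.

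For the first piece, the $\LipHess$-Lipschitz Hessians of $f$ and each $\cons_i$ give
\[
\|\grad_x \Lag(\hat{x}_{k+1},y_k) - \grad_x\Lag(x_k,y_k) - \grad^2_{xx}\Lag(x_k,y_k)\dir{x}_k\|_2 \le \tfrac{\LipHess}{2}(1+\|y_k\|_1)\|\dir{x}_k\|_2^2,
\]
because the Hessian of $\Lag(\cdot,y_k)$ is $\LipHess(1+\|y_k\|_1)$-Lipschitz since $y_k > \zeros$.

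For the second piece, the difference from the model term is $(\grad\cons(\hat{x}_{k+1}) - \grad\cons(x_k))^\top \dir{y}_k$. Its norm is at most $\sum_i |\dir{y}_{k,i}|\,\LipGrad \|\dir{x}_k\|_2$. Since $|\dir{y}_{k,i}| = y_{k,i}|\dir{s}_{k,i}/s_{k,i}|$, Cauchy--Schwarz bounds $\sum_i|\dir{y}_{k,i}|$ by $\|y_k\|_2\|S_k^{-1}\dir{s}_k\|_2$. Combining the two pieces gives \eqref{eq:LagError}.

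The main care point is this last step. One must bound the $\dir{y}_k$ contribution with $\|y_k\|_2\|S_k^{-1}\dir{s}_k\|_2$ rather than a cruder norm. One must also check that the Hessian-Lipschitz constant of the Lagrangian scales with $\|y_k\|_1$, which needs $y_k$ nonnegative, and this holds since $s_k > 0$. The rest is routine bookkeeping.
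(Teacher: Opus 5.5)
Your proof is correct. The remainders $r_i$ with $|r_i|\le \frac{\LipGrad}{2}\|\dir{x}_k\|_2^2$, the collapse $(s_{k,i}+[\dir{s}_k]_i)(y_{k,i}+[\dir{y}_k]_i)-\mu=-\mu([\dir{s}_k]_i/s_{k,i})^2$ coming from $S_k y_k=\mu\ones$, the bound $|[\hat{y}_{k+1}]_i|\le y_{k,i}(1+\|S_k^{-1}\dir{s}_k\|_\infty)$, and the Cauchy--Schwarz estimate $\|\dir{y}_k\|_1\le\|y_k\|_2\|S_k^{-1}\dir{s}_k\|_2$ yield exactly \eqref{eq:new-general-comp-bound} and \eqref{eq:LagError}.

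The paper gives no proof of its own and imports the lemma from \citet{hinder2024worst}, so there is no in-paper argument to compare against; your direct Taylor expansion of $\cons$ and $\grad_x \Lag$ along $\Convex\{x_k,\hat{x}_{k+1}\}$ is the natural route. One small quibble: the $\LipHess(1+\|y_k\|_1)$ Hessian-Lipschitz constant of $\Lag(\cdot,y_k)$ holds for $y_k$ of any sign, so nonnegativity of $y_k$ is not needed there.
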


Finally, we will finish this section with \Cref{lem:invS-ds-bound}. This is a key result that allows us to bound the direction size of the slack variables, i.e, $\|S_k^{-1} \dir{s}_k\|_2$. Combining \Cref{lem:invS-ds-bound} with the above lemmas will allow us to give concrete bounds on the reduction of the log barrier at each iteration.
\begin{lemma}\cite[Lemma 9]{hinder2024worst}\label{lem:invS-ds-bound} 
	Suppose Assumption~\ref{assume-lip-deriv} holds (Lipschitz derivatives). Then the following inequality holds:\begin{flalign}\label{eq:bound-slack-variable}
		\| S_k^{-1} \dir{s}_k \|_2 \leq \sqrt{\frac{-2\mk}{\mu}}.
	\end{flalign}
\end{lemma}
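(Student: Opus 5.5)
We need to prove $\| S_k^{-1} \dir{s}_k \|_2 \le \sqrt{-2\mk/\mu}$. The plan is to split the barrier Hessian into its positive semidefinite pieces, show that one piece is exactly $\mu\|S_k^{-1}\dir{s}_k\|_2^2$, and then read the bound off the identity $-\mk = \tfrac12 d^\top \grad^2\barrier d + \delta_k\|d\|^2$.

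\textbf{Step 1: decompose the Hessian.} Write $d = \dir{x}_k$ and differentiate $\barrier(x) = f(x) - \mu\sum_i \log \cons_i(x)$ twice. This gives
\[
\grad^2 \barrier(x_k) = \grad^2 f(x_k) - \sum_{i} \frac{\mu}{s_i}\grad^2 \cons_i(x_k) + \mu\, \grad \cons(x_k)^\top S_k^{-2} \grad\cons(x_k),
\]
where $s_i$ are the components of $s_k = \cons(x_k)$. Convexity of $f$ and concavity of each $\cons_i$ (with $\mu/s_i>0$) make the first two terms together positive semidefinite. Applying the quadratic form to $d$ and using $\dir{s}_k = \grad\cons(x_k) d$, we obtain
\[
d^\top \grad^2 \barrier(x_k) d \ge \mu\, \| S_k^{-1}\dir{s}_k\|_2^2 .
\]

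\textbf{Step 2: express $-\mk$ through the quadratic form.} From \eqref{eq:search-direction} we have $\grad\barrier(x_k)^\top d = -d^\top\grad^2\barrier(x_k) d - \delta_k\|d\|_2^2$. Substituting this into $\mk = \grad\barrier(x_k)^\top d + \tfrac12 d^\top\grad^2\barrier(x_k)d$ gives
\[
-\mk = \tfrac12 d^\top \grad^2\barrier(x_k)d + \delta_k\|d\|_2^2 \ge \tfrac12 d^\top\grad^2\barrier(x_k) d,
\]
using $\delta_k \ge 0$.

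\textbf{Step 3: combine.} Chaining the two displays yields $-\mk \ge \tfrac{\mu}{2}\|S_k^{-1}\dir{s}_k\|_2^2$. Rearranging and taking square roots gives \eqref{eq:bound-slack-variable}.

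There is no substantive obstacle. The only point needing care is the sign bookkeeping in the Hessian of $-\mu\log\cons_i$: its rank-one term is $+\mu\,\grad\cons_i\grad\cons_i^\top/s_i^2$ and its curvature term is $-\mu\grad^2\cons_i/s_i$, which is PSD by concavity. Note that this argument uses convexity. Assumption~\ref{assume-lip-deriv} is not actually needed here beyond the existence of second derivatives. If convexity were dropped, one would instead have to rely on the regularization $\delta_k$ dominating the negative curvature, as in \cite{hinder2024worst}. In the present convex setting the simple argument above suffices.
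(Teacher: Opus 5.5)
Your proof is correct and matches the paper's argument. The paper imports from \cite{hinder2024worst} the inequality $\mu\|S_k^{-1}\dir{s}_k\|_2^2 \le -(\dir{x}_k)^\top \grad_{xx}^2\Lag(x_k,y_k)\dir{x}_k - 2\mk$ and then drops the Lagrangian-Hessian term by convexity. Your Steps 1--2 derive that same inequality directly, since the first two terms of your Hessian decomposition are exactly $\grad_{xx}^2\Lag(x_k,y_k)$ with $y_k=\mu S_k^{-1}\ones$.
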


\begin{proof}
As discussed immediately after Lemma 9 of \citet{hinder2024worst} one has 
$\| S^{-1}_k \dir{s}_k \|_2 \le \sqrt{ \frac{ -(\dir{x}_k)^\top \grad_{xx}^2 \Lag(x_k,y_k) \dir{x}_k - 2 \mk}{\mu} }$.
Since $\grad_{xx}^2 \Lag(x_k,y_k) \succeq 0$ by convexity we get the desired result.
\end{proof}

\subsection{Analysis of our log barrier method with fixed barrier parameter}\label{sec:iterations-bound-Fritz-John-points}

In this subsection, we prove that \Cref{alg:IPM} finds an approximate stationary interior point in at most $O(\mu^{-5/3})$ iterations. 
This proof shares a similar structure to the proof in \Cref{sec:analysis-our-adaptive-line-search-ipm-unconstrained}
for the unconstrained setup but is more technical. 
In particular, we show that there is a threshold representing the minimum step size that the algorithm must take as a function of the expected progress $-\mk$.
Although this step size threshold decays as $-\mk$ grows, this is balanced out because,
by the sufficient decrease condition (\Cref{eq:sufficient-decrease}) and \Cref{lem:model-alpha-k-bound}, the function value is reduced by at least $\alpha_k \SufficientConstant \mk$
at each iteration.
On the other hand, we show if this step threshold exceeds one then $\alpha_k=1$ and 
the algorithm will terminate at the next iteration.
While the high-level structure is similar to \Cref{sec:analysis-our-adaptive-line-search-ipm-unconstrained},
the threshold and exact analysis is much more involved due to the presense of the log barrier.

\paragraph{Step size thresholds that guarantee feasibility.} 
\begin{align}\label{def:alpha-feas-threshold}
	\alphaInd{k}^{(1)} := \frac{1}{8} \sqrt{\frac{\mu}{-2\mk}}, \qquad
	\alphaInd{k}^{(2)} := \frac{1}{2\sqrt{2 \LipGrad}} \cdot \frac{\mu ^ {3/2} \cdot \eta_1}{-\mk}.
\end{align}
In \Cref{lemPrimalDualApproxResultTwoAlpha} we show that for any step size $\alpha \in (0,1]$ below these thresholds, 
$\kappa_{\alpha} \in (0,1/4]$ and therefore, by \Cref{lemPrimalDualApproxResultTwo} the candidate 
iterate $x_{k} + \alpha \dir{x}_k$ is feasible.

\begin{lemma}\label{lemPrimalDualApproxResultTwoAlpha}
	Suppose Assumption~\ref{assume-lip-deriv} holds (Lipschitz derivatives). Then, for any step size $\alpha \in [0,1]$ with $\alpha \le \min\{ 1, \alphaInd{k}^{(1)}, \alphaInd{k}^{(2)} \}$, we have
	$\kappa_{\alpha} \in [0,1/4]$ where $\kappa_{\alpha}$ is defined in \Cref{lemPrimalDualApproxResultTwo}.
\end{lemma}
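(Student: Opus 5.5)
We bound the two terms of $\kappa_\alpha$ separately and show each is at most $1/8$.

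\textbf{First term.} By \Cref{lem:invS-ds-bound}, $\| S_k^{-1} \dir{s}_k \|_2 \le \sqrt{-2\mk/\mu}$. Hence, for $\alpha \le \alphaInd{k}^{(1)} = \frac{1}{8}\sqrt{\mu/(-2\mk)}$,
\[
\alpha \| S_k^{-1} \dir{s}_k \|_2 \le \tfrac18 .
\]

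\textbf{Second term.} We need upper bounds on $\|\dir{x}_k\|_2$ and $\|y_k\|_2$.
\begin{itemize}
\item From \Cref{eq:direction-condition-eta1}, $\|\dir{x}_k\|_2 \le (-\mk)/(\eta_1\varepsilon_k)$.
\item Since $y_k > \zeros$, $\|y_k\|_2 \le \|y_k\|_1 \le 1 + \|y_k\|_1$.
\end{itemize}
Using $\varepsilon_k^2 = \mu^2(1+\|y_k\|_1)$, these combine to
\[
\frac{\LipGrad \|\dir{x}_k\|_2^2 \|y_k\|_2}{\mu}
\le \frac{\LipGrad (-\mk)^2 (1+\|y_k\|_1)}{\eta_1^2 \mu^3 (1+\|y_k\|_1)}
= \frac{\LipGrad (-\mk)^2}{\eta_1^2\mu^3}.
\]
The factor $(1+\|y_k\|_1)$ cancelling is exactly why $\varepsilon_k$ carries $\sqrt{1+\|y_k\|_1}$. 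Now plug in $\alpha \le \alphaInd{k}^{(2)}$, which gives $\alpha^2 \le \eta_1^2 \mu^3 / (8\LipGrad (-\mk)^2)$. The second term of $\kappa_\alpha$ is therefore at most $1/8$.

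\textbf{Conclusion.} Adding the two bounds gives $\kappa_\alpha \le 1/4$. Nonnegativity of $\kappa_\alpha$ is immediate, since both terms are nonnegative for $\alpha \ge 0$.

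One loose end: \Cref{lem:invS-ds-bound} and the constant in $\alphaInd{k}^{(1)}$ involve $-\mk$, so we should note $-\mk > 0$. This follows from \Cref{eq:direction-condition-eta1}. If it did fail, the thresholds would be infinite or undefined, and the second bound would instead follow directly from $\|\dir{x}_k\|_2 = 0$.

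The only step requiring care is the cancellation in the second term. It relies on $\|y_k\|_2 \le 1+\|y_k\|_1$ and on using the $\varepsilon_k$ from Condition~\ref{condition:search-dir}. No other part of the argument is delicate.
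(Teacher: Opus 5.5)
Your proof is correct and follows the same route as the paper. Each term of $\kappa_\alpha$ is bounded by $1/8$: the first via \Cref{lem:invS-ds-bound} and $\alpha \le \alphaInd{k}^{(1)}$, the second via \eqref{eq:direction-condition-eta1} with $\varepsilon_k = \mu\sqrt{1+\|y_k\|_1}$, the inequality $\|y_k\|_2 \le \|y_k\|_1 + 1$, and $\alpha \le \alphaInd{k}^{(2)}$. You bound $\|d^{x}_k\|_2$ before substituting the step-size bound while the paper does the reverse, which is cosmetic. Your hedge about $-M_k \le 0$ is unnecessary: $\|\nabla \psi_\mu(x_k)\|_2 > \varepsilon_k > 0$ forces $d^{x}_k \neq 0$, so $-M_k \ge \eta_1 \varepsilon_k \|d^{x}_k\|_2 > 0$.
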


\begin{proof}
Using equation \eqref{eq:kappa-requirement} from \Cref{lemPrimalDualApproxResultTwo}, we will show that if the step size $\alpha$ satisfies the inequality $\alpha \leq \min \{1, \alphaInd{k}^{(1)}, \alphaInd{k}^{(2)} \}$ then $\kappa_{\alpha} \in [0,1/4]$.

Denote $S_k = \diag(\cons(x_k))$, $\dir{s}_k = \grad \cons(x_k) \dir{x}_k$, and $y_k = \mu S_k^{-1} \ones$.
First, we will show that $\alpha \| S_k^{-1} \dir{s}_k \|_{2} \leq \frac{1}{8}$. Note that:
\begin{flalign*}
	\alpha \| S_k^{-1} \dir{s}_k \|_{2} &\underle{(a)} \frac{1}{8} \cdot  \sqrt{\frac{\mu}{-2\mk}} \cdot \| S_k^{-1} \dir{s}_k \|_{2} \\
	&\underle{(b)} \frac{1}{8} \cdot \sqrt{\frac{\mu}{-2\mk}} \cdot \sqrt{\frac{-2\mk}{\mu}} = \frac{1}{8}
\end{flalign*}
where inequality $(a)$ uses $\alpha \leq \min \{\alphaInd{k}^{(1)}, \alphaInd{k}^{(2)}\} \leq \alphaInd{k}^{(1)}$ and the definition of  $\alphaInd{k}^{(1)}$ from \eqref{def:alpha-feas-threshold} and inequality $(b)$ uses $\| S_k^{-1} \dir{s}_k \|_2 \leq \sqrt{\frac{-2\mk}{\mu}}$ from \eqref{eq:bound-slack-variable}.
	
Next, we will show that $\frac{\LipGrad \| \dir{x}_k \|_2^2  \| y_k \|_2 \alpha ^ 2}{\mu} \le \frac{1}{8}$. Note that:
\begin{flalign*}
\frac{\LipGrad \|\dir{x}_k \|_2^2  \| y_k \|_2 \alpha^2}{\mu} &\underle{(a)} \frac{\LipGrad \|\dir{x}_k \|_2^2  \| y_k \|_2}{\mu} \cdot \left(\frac{1}{2\sqrt{2 \LipGrad}} \cdot \frac{\mu ^ {3/2} \cdot \eta_1}{-\mk}\right) ^ 2 \\
&= \frac{\| y_k \|_2}{8} \cdot \|\dir{x}_k \|_2^2 \cdot \frac{\mu ^ 2 \eta_1 ^ 2}{(-\mk)^2} \\
&\underle{(b)} \frac{1}{8} \cdot \|\dir{x}_k \|_2^2 \cdot \mu ^ 2 \eta_1 ^ 2 \cdot \frac{\|y_k\|_2}{\eta_1 ^ 2 \mu ^ 2 \|\dir{x}_k\|_2^2 (\|y_k\|_1 + 1)} =  \frac{1}{8} \cdot \frac{\|y_k\|_2}{\|y_k\|_1 + 1} \le \frac{1}{8}
\end{flalign*}
where inequality $(a)$ uses $\alpha \leq \min \{\alphaInd{k}^{(1)}, \alphaInd{k}^{(2)}\} \leq \alphaInd{k}^{(2)}$ and the definition of $\alphaInd{k}^{(2)}$ from \eqref{def:alpha-feas-threshold} and inequality $(b)$ uses~\eqref{eq:direction-condition-eta1} from Condition~\ref{condition:search-dir} with $\varepsilon_k = \mu \sqrt{\|y_k\|_1 + 1}$.  
	
Therefore $\kappa_{\alpha} = \alpha \| S_k^{-1} \dir{s}_k \|_{2} + \frac{\LipGrad \| \dir{x}_k \|_2^2  \| y_k \|_2 \alpha ^ 2}{\mu} \leq \frac{1}{4} $ as desired.
\end{proof}

\paragraph{Step size thresholds that guarantee sufficient decrease.}  
\begin{align}\label{def:alpha-sufficient-decrease-threshold}
\begin{split}
\alphaInd{k}^{(3)} := \frac{(1 - \SufficientConstant)^{1/2} \mu^{3/2} \eta_1^{3/2}}{\LipHess^{1/2} (-\mk)},
\qquad
\alphaInd{k}^{(4)} := \frac{(1 - \SufficientConstant)^{1/2} \mu^{5/4} \eta_1}{(8\sqrt{2})^{1/2} \LipGrad^{1/2} (-\mk)^{3/4}}, \qquad\quad\qquad\quad \\[2ex]
\alphaInd{k}^{(5)} := \frac{(1 - \SufficientConstant)^{1/3} \mu^{5/3} \eta_1^{4/3}}{2 \LipGrad^{2/3} (-\mk)},
\quad 
\alphaInd{k}^{(6)} := \frac{(1 - \SufficientConstant)^{1/2} \mu^{1/4}}{(80\sqrt{2})^{1/2} (-\mk)^{1/4}}, 
\quad \alphaInd{k}^{(7)} := \frac{(1 - \SufficientConstant)^{1/5} \mu^{8/5} \eta_1^{6/5}}{40^{1/5} \LipGrad^{3/5} (-\mk)}.
\end{split}
\end{align}
\Cref{lemMinimumStepSizeIPM} uses these thresholds and the thresholds defined in \Cref{def:alpha-feas-threshold} to 
lower bound the step size $\alpha_k$ taken by our method. The proof of \Cref{lemMinimumStepSizeIPM} shows that for 
any step size $\alpha \in (0,1]$ below these thresholds and the thresholds defined in \Cref{def:alpha-feas-threshold}, the sufficient decrease condition given in the first part of Condition~\ref{step-size-condition} is satisfied. 
Combining this fact with the second part of Condition~\ref{step-size-condition} which requires that larger step sizes   
violate the sufficient decrease condition, allows us to obtain our desired lower bounds on the step size.

The analog of \Cref{lemMinimumStepSizeIPM} in the unconstrained setting is \Cref{lemma-backtracking-threshold-and-trigger-phase}.

\begin{lemma}\label{lemMinimumStepSizeIPM}
Suppose Assumption~\ref{assume-lip-deriv} holds (Lipschitz derivatives).
Then, if $\min_{i \in [7]} \alphaInd{k}^{(i)} \ge 1$ then $\alpha_k=1$ and if $\min_{i \in [7]} \alphaInd{k}^{(i)} < 1$ 
then $\alpha_k > \backtrackingFactor \cdot \min_{i \in [7]} \alphaInd{k}^{(i)}$.
\end{lemma}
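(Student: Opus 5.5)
The plan mirrors the proof of \Cref{lemma-backtracking-threshold-and-trigger-phase}. Let $\alpha \in (0,1]$ satisfy $\alpha \le \min_{i\in[7]} \alphaInd{k}^{(i)}$. I will show that the sufficient decrease condition \eqref{eq:sufficient-decrease} holds at $\alpha$. Once this is established, the conclusion follows exactly as in the unconstrained case. If the minimum is at least one, then $\alpha=1$ is acceptable, so $\alpha_k = 1$. Otherwise, suppose $\alpha_k \le \backtrackingFactor \min_i \alphaInd{k}^{(i)}$. Then $\hat\alpha_k = \alpha_k/\backtrackingFactor \le \min\{1, \min_i \alphaInd{k}^{(i)}\}$ would satisfy \eqref{eq:sufficient-decrease}, which contradicts \eqref{eq:sufficient-decrease-fails-for-larger-step-size}. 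Note that the $\hat\alpha_k=1$ case is also covered, since then $1\le\min_i\alphaInd{k}^{(i)}$ would contradict $\min_i\alphaInd{k}^{(i)}<1$.

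To establish sufficient decrease, I first invoke \Cref{lemPrimalDualApproxResultTwoAlpha}, using $\alpha \le \alphaInd{k}^{(1)},\alphaInd{k}^{(2)}$, to get $\kappa_\alpha \le 1/4$. In the proof of that lemma, the two pieces of $\kappa_\alpha$ are each bounded by $1/8$, and those intermediate bounds will be reused. \Cref{lemPrimalDualApproxResultTwo} then gives feasibility and the three-term error bound \eqref{eq:progress-barrier-error}. It therefore suffices to show that the right-hand side of \eqref{eq:progress-barrier-error} is at most $(1-\SufficientConstant)\alpha(-\mk)$. By \Cref{lem:model-alpha-k-bound} this is at most $(1-\SufficientConstant)(-\Mk(\alpha\dir{x}_k))$, and then $\barrier(x_k+\alpha\dir{x}_k) \le \barrier(x_k)+\SufficientConstant \Mk(\alpha \dir{x}_k)$. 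I will split the budget $(1-\SufficientConstant)\alpha(-\mk)$ into fractions, say thirds or similar constants matching those in the definitions of $\alphaInd{k}^{(3)},\dots,\alphaInd{k}^{(7)}$, with one fraction for each error term.

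The term-by-term bounds use three tools: $\|\dir{x}_k\|_2 \le (-\mk)/(\eta_1\varepsilon_k)$ from \eqref{eq:direction-condition-eta1}, where $\varepsilon_k=\mu\sqrt{1+\|y_k\|_1}$; $\|y_k\|_2\le\|y_k\|_1$; and \Cref{lem:invS-ds-bound}.
\begin{itemize}
\item The cubic term $\LipHess(1+2\|y_k\|_1)\|\dir{x}_k\|^3\alpha^3/6$ is at most $\LipHess \alpha^3 (-\mk)^3/(3\eta_1^3\mu^3\sqrt{1+\|y_k\|_1})$. Dividing by $\alpha(-\mk)$ leaves a multiple of $\alpha^2(-\mk)^2\LipHess/(\eta_1^3\mu^3)$, which is controlled by $\alphaInd{k}^{(3)}$.
\item For the second term, I expand $\kappa_\alpha$ into its two pieces. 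The piece $\alpha\|S_k^{-1}\dir{s}_k\| \le \alpha\sqrt{2(-\mk)/\mu}$ combined with $\LipGrad\|\dir{x}_k\|^2\|y_k\|_1\alpha^2 \le \LipGrad\alpha^2(-\mk)^2/(\eta_1^2\mu^2)$ gives an $\alpha^3(-\mk)^{5/2}\mu^{-5/2}$ term, handled by $\alphaInd{k}^{(4)}$. The quadratic piece gives $\LipGrad^2\alpha^4(-\mk)^4/(\eta_1^4\mu^5)$, handled by $\alphaInd{k}^{(5)}$.
\item For $5\mu\kappa_\alpha^3/3$, I use $(a+b)^3\le4(a^3+b^3)$. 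The $a^3$ part $\mu\alpha^3(2(-\mk)/\mu)^{3/2}$, relative to $\alpha(-\mk)$, gives $\alpha^2(-\mk)^{1/2}\mu^{-1/2}$, which is handled by $\alphaInd{k}^{(6)}$. The $b^3$ part $\mu(\LipGrad\alpha^2(-\mk)^2/(\eta_1^2\mu^3))^3$, relative to $\alpha(-\mk)$, gives $\alpha^5(-\mk)^5\LipGrad^3/(\eta_1^6\mu^8)$, which is handled by $\alphaInd{k}^{(7)}$.
\end{itemize}

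The main obstacle is bookkeeping the constants so that the fractions assigned to each term sum to at most $1-\SufficientConstant$ given the specific numerical constants ($8\sqrt2$, $80\sqrt2$, $40$, the factor $2$) in the thresholds. I would first verify the exponent matching above, which confirms that each threshold corresponds to exactly one term. If a constant then turns out slightly too small, I would tighten the bound by using $\kappa_\alpha \le 1/4$ to replace $\kappa_\alpha^3$ by $\kappa_\alpha^2/4$, or by using the $1/8$ bounds on each piece rather than the cruder $(a+b)^3$ inequality.
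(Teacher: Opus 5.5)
Your proposal is correct and is essentially the paper's argument. The paper bounds the three error terms of \Cref{lemPrimalDualApproxResultTwo} with the same three tools and packages them, via (sum $\le 3\max$) and $\kappa_\alpha \le 2\max\{\cdot,\cdot\}$, into five monomials in $\alpha$; it then reads off $\alphaInd{k}^{(3)},\dots,\alphaInd{k}^{(7)}$ through Claim~\ref{claim:generic-min-step-size}, which is your backtracking-contradiction step in generic form. Your additive bookkeeping closes exactly with the stated constants, so no fallback tightening is needed: under the thresholds the cubic term is at most $\frac{1}{3}(1-\SufficientConstant)\alpha(-\mk)$, and each of the four sub-pieces is at most $\frac{1}{6}(1-\SufficientConstant)\alpha(-\mk)$. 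Two of those sub-pieces come from the $\kappa_\alpha\alpha^2$ term, and two from $\frac{5\mu}{3}\kappa_\alpha^3 \le \frac{20\mu}{3}(a^3+b^3)$.
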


To prove \Cref{lemMinimumStepSizeIPM} we first prove Claim~\ref{claim:generic-min-step-size}.
We then recast the Taylor series error bounds from \Cref{lemPrimalDualApproxResultTwo} in the form of \Cref{eq:structured-error-bound} using \Cref{lemPrimalDualApproxResultTwoAlpha} and \Cref{eq:progress-barrier-error}, allowing us to mechanically derive \Cref{lemMinimumStepSizeIPM} using Claim~\ref{claim:generic-min-step-size}. 

\begin{claim}\label{claim:generic-min-step-size}
Let the premise of \Cref{lemMinimumStepSizeIPM} hold.
Additionally, suppose that there exists constants $t_k, \zeta_{k,1}, \dots, \zeta_{k,P} \in (0,\infty)$ and $\theta_{1}, \dots, \theta_{P} \in (1,\infty)$ such that for all $\alpha \in [0, t_k]$ 
\begin{flalign}\label{eq:structured-error-bound}
\barrier(x_k + \alpha \dir{x}_k) \le \barrier(x_k) + \Mk(\alpha \dir{x}_k) + \max_{p \in [P]} \zeta_{k,p} \alpha^{\theta_{p}}.
\end{flalign}
Let
\[
\tau_k := \min\left\{  t_k , \min_{p \in [P]} \left( \frac{-\mk (1 - \SufficientConstant)}{\zeta_{k,p}} \right)^{\frac{1}{\theta_{p}-1}} \right\}.
\]
Then, if $\tau_k \ge 1$ then $\alpha_k=1$ and if $\tau_k < 1$ then $\alpha_k > \backtrackingFactor \tau_k$.
\end{claim}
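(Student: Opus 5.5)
The argument mirrors the proof of \Cref{lemma-backtracking-threshold-and-trigger-phase}: show that every $\alpha \in (0, \min\{1,\tau_k\}]$ satisfies the sufficient decrease condition \eqref{eq:sufficient-decrease}, then combine this with the second part of Condition~\ref{step-size-condition}.

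\textbf{Step 1 (the error term is dominated).} Fix $\alpha \in (0,\min\{1,\tau_k\}]$ and any $p \in [P]$. Since $\alpha \le \tau_k \le t_k$, the bound \eqref{eq:structured-error-bound} applies. Since $\theta_p > 1$ and $\alpha \le \left( \frac{-\mk(1-\SufficientConstant)}{\zeta_{k,p}}\right)^{1/(\theta_p-1)}$, we have $\alpha^{\theta_p - 1} \le \frac{-\mk(1-\SufficientConstant)}{\zeta_{k,p}}$. Here we use that $-\mk > 0$, which holds by \eqref{eq:direction-condition-eta1} (since $\eta_1>0$ and $\dir{x}_k \neq 0$ because $\grad\barrier(x_k)\neq 0$). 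Hence
\[
\zeta_{k,p}\alpha^{\theta_p} \le (1-\SufficientConstant)\,\alpha(-\mk).
\]

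\textbf{Step 2 (sufficient decrease holds).} By \Cref{lem:model-alpha-k-bound}, $\alpha(-\mk) \le -\Mk(\alpha\dir{x}_k)$. Taking the maximum over $p$ in Step 1 gives $\max_p \zeta_{k,p}\alpha^{\theta_p} \le (1-\SufficientConstant)(-\Mk(\alpha \dir{x}_k))$. Substituting into \eqref{eq:structured-error-bound} yields
\[
\barrier(x_k+\alpha\dir{x}_k) \le \barrier(x_k) + \SufficientConstant\,\Mk(\alpha\dir{x}_k),
\]
which is \eqref{eq:sufficient-decrease}.

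\textbf{Step 3 (case analysis).} If $\tau_k \ge 1$, Step 2 applies with $\alpha = 1$, so the unit step satisfies \eqref{eq:sufficient-decrease}. The second part of Condition~\ref{step-size-condition} then forces $\alpha_k = 1$: if $\alpha_k < 1$, then $\hat\alpha_k = \min\{1,\alpha_k/\backtrackingFactor\}$ is a step size in $(0,1]$ that, by Step 2, satisfies \eqref{eq:sufficient-decrease}, contradicting \eqref{eq:sufficient-decrease-fails-for-larger-step-size}.

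If $\tau_k < 1$, suppose for contradiction that $\alpha_k \le \backtrackingFactor\tau_k$. Then $\alpha_k < 1$ and $\hat\alpha_k = \min\{1,\alpha_k/\backtrackingFactor\} \le \tau_k$. So Step 2 applies to $\hat\alpha_k$, and \eqref{eq:sufficient-decrease} holds at $\hat\alpha_k$, contradicting \eqref{eq:sufficient-decrease-fails-for-larger-step-size}. Hence $\alpha_k > \backtrackingFactor\tau_k$, which is strict as the claim states.

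\textbf{Main obstacle.} The only delicate point is the bookkeeping in Step 3. One must check that $\hat\alpha_k$ stays within $[0,t_k]$, so that \eqref{eq:structured-error-bound} can be invoked, and one must not overlook the positivity of $-\mk$, which the inequality manipulations in Step 1 require. Everything else is routine.
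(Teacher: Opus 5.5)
Your proposal is correct and follows essentially the same route as the paper: you use \Cref{lem:model-alpha-k-bound} to show that every step size up to $\tau_k$ satisfies the sufficient decrease condition, and then use the required failure of sufficient decrease at $\hat{\alpha}_k$ to conclude $\alpha_k = 1$ or $\alpha_k > \gamma \tau_k$. Your version is slightly more careful than the paper's, since you restrict to $\alpha \le \min\{1,\tau_k\}$ so that the fact applies and you explicitly check $-M_k > 0$, but the argument is the same.
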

\begin{proof}
By the Claim statement and \Cref{lem:model-alpha-k-bound}, for the sufficient decrease condition, \Cref{eq:sufficient-decrease}, to hold, it suffices for 
\[
\max_{p \in [P]} \zeta_{k,p} \alpha^{\theta_{p}} \le \alpha (\SufficientConstant - 1) \cdot \mk.
\]
Dividing both sides by $\alpha$ gives:
\[
\max_{p \in [P]} \zeta_{k,p} \alpha^{\theta_{p} - 1} \le (\SufficientConstant - 1) \cdot \mk.
\]
Since $\theta_{p} > 1$, solving for $\alpha$ gives that the sufficient decrease condition (\Cref{eq:sufficient-decrease}) is satisfied for all
\[
\alpha \le \min\left\{ t_k, \min_{p \in [P]} \left( \frac{- \mk (1 - \SufficientConstant)}{\zeta_{k,p}} \right)^{\frac{1}{\theta_{p} - 1}} \right\} = \tau_k.
\]
Therefore if $\tau_k \ge 1$, then \Cref{eq:sufficient-decrease-fails-for-larger-step-size} fails for all $\hat{\alpha}_k \le 1$, so Condition~\ref{step-size-condition} is satisfied only for $\alpha_k = 1$. Otherwise, if $\tau_k < 1$, then by the requirement
in \Cref{eq:sufficient-decrease-fails-for-larger-step-size} that the larger step size 
$\hat{\alpha}_k = \min\{ \alpha_k / \backtrackingFactor, 1 \}$ fails to obtain sufficient decrease we deduce that 
$\alpha_k / \backtrackingFactor > \tau_k$ and thus $\alpha_k > \backtrackingFactor \tau_k$.
\end{proof}

\begin{proof}[Proof of \Cref{lemMinimumStepSizeIPM}]
We proceed to obtain a bound in the form of \Cref{eq:structured-error-bound} so we can 
employ Claim~\ref{claim:generic-min-step-size}.
For simplicity, let $\ykPlusOne = \sqrt{\| y_k \|_1 + 1}$ where $y_k = \mu S_k^{-1} \mathbf{1}$ and $S_k = \diag(\cons(x_k))$. Using \eqref{eq:direction-condition-eta1} from 
Condition~\ref{condition:search-dir} with $\varepsilon_k = \mu \ykPlusOne$, we get:
\[
\|\dir{x}_k\|_2 \le \frac{-\mk}{\varepsilon_k \eta_1} = \frac{-\mk}{\ykPlusOne \mu \eta_1}.
\]
Also, using \eqref{eq:kappa-requirement} from  \Cref{lemPrimalDualApproxResultTwo} as well as \Cref{lem:invS-ds-bound}, for any $\alpha \in [0,1]$ we get:
\[
\kappa_{\alpha} \le 2 \max\left\{ \sqrt{\frac{-2 \mk}{\mu}} \alpha,  \frac{\LipGrad \| \dir{x}_k \|_2^2  \ykPlusOne^2}{\mu}  \alpha^2 \right\} \le 2 \max\left\{ \sqrt{\frac{-2 \mk}{\mu}} \alpha,  \frac{\LipGrad (-\mk)^2}{\mu^3 \eta_1^2} \alpha^2 \right\}.
\]
Moreover, using \Cref{lem:model-alpha-k-bound} and \eqref{eq:progress-barrier-error} from \Cref{lemPrimalDualApproxResultTwo}, for any $\alpha \in [0,\min\{1,\alphaInd{k}^{(1)},\alphaInd{k}^{(2)}\}]$ we have 
\begin{flalign*}
&\barrier(x_k + \alpha \dir{x}_k) - \barrier(x_k) - \Mk(\alpha \dir{x}_k) \le 3 \max\left\{ \frac{2 \LipHess \ykPlusOne^2 \| \dir{x}_k \|_2^3 \alpha^3}{6} , {\frac{4 \LipGrad \| \dir{x}_k \|_2^2 \ykPlusOne^2 \kappa_{\alpha} \alpha^2 }{3}} , \frac{5 \mu \kappa_{\alpha}^3}{3} \right\} \\
&\underle{(a)} \max\left\{ \frac{\LipHess \alpha^3 (-\mk)^3}{\ykPlusOne \mu^3 \eta_1^3} , \frac{4 \LipGrad \kappa_{\alpha} \alpha^2 (-\mk)^2}{\mu^2 \eta_1^2} , 5 \mu \kappa_{\alpha}^3 \right\} \\
&\underle{(b)} \max\left\{ 
\frac{\LipHess (-\mk)^3}{\ykPlusOne \mu^3 \eta_1^3} \alpha^3, 
\frac{8\sqrt{2} \LipGrad  (-\mk)^{5/2}}{\mu^{5/2} \eta_1^2} \alpha^3, 
\frac{8 \LipGrad^2 (-\mk)^4}{\mu^5 \eta_1^4}  \alpha^4, 
\frac{80\sqrt{2} (-\mk)^{3/2}}{\mu^{1/2}} \alpha^3 , 
\frac{40 \LipGrad^3 (-\mk)^6}{\mu^8 \eta_1^6}  \alpha^6 
\right\}.
\end{flalign*}
where inequality $(a)$ uses our upper bound on $\| \dir{x}_k \|_2$ and $(b)$ substitutes for our upper bound on $\kappa_{\alpha}$.
Thus, by \Cref{lemPrimalDualApproxResultTwoAlpha} and Claim~\ref{claim:generic-min-step-size} with $t_k = \min\{ 1, \alphaInd{k}^{(1)}, \alphaInd{k}^{(2)} \}$ we get that
\begin{align*}
\tau_k = \min\Bigg\{
&t_k,
\frac{(1 - \SufficientConstant)^{1/2} \ykPlusOne^{1/2} \mu^{3/2} \eta_1^{3/2}}{\LipHess^{1/2} (-\mk)},
\frac{(1 - \SufficientConstant)^{1/2} \mu^{5/4} \eta_1}{(8\sqrt{2})^{1/2} \LipGrad^{1/2} (-\mk)^{3/4}},
\frac{(1 - \SufficientConstant)^{1/3} \mu^{5/3} \eta_1^{4/3}}{2 \LipGrad^{2/3} (-\mk)}, \\
&\frac{(1 - \SufficientConstant)^{1/2} \mu^{1/4}}{(80\sqrt{2})^{1/2} (-\mk)^{1/4}},
\frac{(1 - \SufficientConstant)^{1/5} \mu^{8/5} \eta_1^{6/5}}{40^{1/5} \LipGrad^{3/5} (-\mk)}
\Bigg\} \ge \min\left\{1, \min_{i \in [7]} \alphaInd{k}^{(i)} \right\}.
\end{align*}
\end{proof}

\newcommand{\remainder}{r}

\newcommand{\MainConst}{\frac{8 \LipGrad^{2/3}}{(1 - \SufficientConstant)^{2/3} \eta_1^{4/3}}}

\Cref{lemMinimumStepSizeIPM} showed that the step size $\alpha_k$ taken by the method 
is one if $\min_{i \in [7]} \alphaInd{k}^{(i)} \ge 1$ and otherwise 
greater than $\backtrackingFactor \min_{i \in [7]} \alphaInd{k}^{(i)}$. 
\Cref{lemBarrierProgress} shows in the case that $\min_{i \in [7]} \alphaInd{k}^{(i)} < 1$
that there is significant reduction in the barrier function. 
The analog of  
\Cref{lemBarrierProgress} in the unconstrained setting is 
\Cref{lemma-function-reduction-using-model-bound-IPM-unconstrained}.

\begin{lemma}\label{lemBarrierProgress}
Suppose Assumption~\ref{assume-lip-deriv} holds. 
If $\min_{i \in [7]} \alphaInd{k}^{(i)} < 1$ then
\begin{flalign*}
\barrier(x_k) - \barrier(x_{k+1}) \ge \SufficientConstant \backtrackingFactor \mu \left( \MainConst{} \mu^{-2/3} + \remainder(\mu) \right)^{-1}
\end{flalign*}
where
\begin{align}
\begin{split}	
\label{eq:formula-for-remainder}
\remainder(\mu)
:=
128
&+
\frac{12800}{(1-\SufficientConstant)^2}
+
\left(\frac{2\sqrt{2 \LipGrad}}{\eta_1}
+
\frac{\LipHess^{1/2}}{(1-\SufficientConstant)^{1/2}\eta_1^{3/2}} \right) \mu^{-1/2}
+
\frac{40^{1/5}\LipGrad^{3/5}}
{(1-\SufficientConstant)^{1/5}\eta_1^{6/5}} \mu^{-3/5}.
\end{split}
\end{align}
\end{lemma}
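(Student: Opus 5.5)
The plan mirrors \Cref{lemma-function-reduction-using-model-bound-IPM-unconstrained}. By the sufficient decrease condition \eqref{eq:sufficient-decrease} and \Cref{lem:model-alpha-k-bound}, $\barrier(x_k)-\barrier(x_{k+1}) \ge -\SufficientConstant \Mk(\alpha_k\dir{x}_k) \ge -\SufficientConstant\alpha_k \mk$. Since $\min_{i\in[7]}\alphaInd{k}^{(i)}<1$, \Cref{lemMinimumStepSizeIPM} gives $\alpha_k > \backtrackingFactor \min_{i\in[7]}\alphaInd{k}^{(i)}$. Writing $m := -\mk > 0$, it therefore suffices to show
\[
m \cdot \min_{i \in [7]} \alphaInd{k}^{(i)} \ge \mu\left( \MainConst{} \mu^{-2/3} + \remainder(\mu)\right)^{-1},
\]
or equivalently $\max_{i\in[7]} \mu/(m\,\alphaInd{k}^{(i)}) \le \MainConst{}\mu^{-2/3} + \remainder(\mu)$. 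I will bound the maximum by the sum of the seven terms and match each to a piece of the right-hand side.

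For the thresholds of the form $c_i \mu^{a_i}/m$, namely $i=2,3,5,7$, the ratio $\mu/(m\alphaInd{k}^{(i)})$ does not depend on $m$. Direct computation gives $2\sqrt{2\LipGrad}\eta_1^{-1}\mu^{-1/2}$ for $i=2$, $\LipHess^{1/2}(1-\SufficientConstant)^{-1/2}\eta_1^{-3/2}\mu^{-1/2}$ for $i=3$, $2\LipGrad^{2/3}(1-\SufficientConstant)^{-1/3}\eta_1^{-4/3}\mu^{-2/3}$ for $i=5$, and $40^{1/5}\LipGrad^{3/5}(1-\SufficientConstant)^{-1/5}\eta_1^{-6/5}\mu^{-3/5}$ for $i=7$. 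These reproduce the $\mu^{-1/2}$ and $\mu^{-3/5}$ terms of $\remainder(\mu)$ exactly, and the $i=5$ term is at most $\MainConst{}\mu^{-2/3}$, since $(1-\SufficientConstant)^{-1/3}\le(1-\SufficientConstant)^{-2/3}$.

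The remaining thresholds $i=1,4,6$ scale as a fractional power of $m$. For them the ratio is $8\sqrt{2}(m/\mu)^{-1/2}$ for $i=1$, $\sqrt{8\sqrt{2}}\,\LipGrad^{1/2}(1-\SufficientConstant)^{-1/2}\eta_1^{-1}\mu^{-1/4}m^{-1/4}$ for $i=4$, and $\sqrt{80\sqrt{2}}(1-\SufficientConstant)^{-1/2}(m/\mu)^{-3/4}$ for $i=6$. These get large when $m$ is small. The key move is that we are in the case $\min_i\alphaInd{k}^{(i)}<1$, so some threshold lies below one, and this forces $m$ to be bounded below. Concretely, I will argue by cases on which index attains the minimum, or more simply by a case split on $m/\mu$. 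If $m\ge\mu$, then the $i=1$ and $i=6$ terms are bounded by the constants $8\sqrt{2}\le128$ and $\sqrt{80\sqrt2}(1-\SufficientConstant)^{-1/2}\le 12800(1-\SufficientConstant)^{-2}$. For $i=4$, I will use Young's inequality in the form $\mu^{-1/4}m^{-1/4} \le \mu^{-1/2}$ when $m\ge\mu$, which absorbs it into the $\LipGrad^{1/2}\mu^{-1/2}$ term. Alternatively, I will use the fact that $\alphaInd{k}^{(4)}$ is a geometric mean of $\alphaInd{k}^{(2)}$-type and $\alphaInd{k}^{(1)}$-type quantities, so it is dominated by their maximum.

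The case $m<\mu$ is the main obstacle. There $\alphaInd{k}^{(1)},\alphaInd{k}^{(6)}$ are large, and I expect that if the minimum is below $1$ it must be attained by one of the other thresholds. Then $\min_i \alphaInd{k}^{(i)}$ equals one of the $m$-independent ones, or $\alphaInd{k}^{(4)}$; in the latter case, $\alphaInd{k}^{(4)}<1$ gives a lower bound on $m^{3/4}$ that converts $m^{-1/4}$ into a power of $\mu$ with constants. I anticipate that the generous constants $128$ and $12800/(1-\SufficientConstant)^2$ are precisely the slack needed to absorb $\alphaInd{k}^{(1)},\alphaInd{k}^{(6)}$ when they are the binding thresholds, via $\alphaInd{k}^{(i)}<1 \Rightarrow m\gtrsim\mu$. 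Pinning down these constants is the bookkeeping I expect to be most delicate.
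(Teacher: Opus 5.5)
Your reduction is correct and is the paper's. Sufficient decrease, \Cref{lem:model-alpha-k-bound} and \Cref{lemMinimumStepSizeIPM} leave you to lower-bound $m\min_{i}\alphaInd{k}^{(i)}$ with $m:=-\mk$, and your seven ratios $T_i:=\mu/(m\,\alphaInd{k}^{(i)})$ are all computed correctly. The plan you build on top of this, however, contains steps that fail as written.
\begin{enumerate}
\item In the case $m<\mu$, your expectation that neither $\alphaInd{k}^{(1)}$ nor $\alphaInd{k}^{(6)}$ can be the binding threshold is false. Indeed $\alphaInd{k}^{(1)}<1$ iff $m>\mu/128$, and $\alphaInd{k}^{(6)}<1$ iff $m>(1-\SufficientConstant)^2\mu/12800$, so both can bind with $m<\mu$. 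Your last sentence half-retracts this.
\item The opening strategy cannot be the organizing principle either. That strategy bounds $\max_i T_i$ by $\sum_i T_i$ and matches each $T_i$ to its own piece of the right-hand side. But $\alphaInd{k}^{(2)}<1$ only requires $m>\eta_1\mu^{3/2}/(2\sqrt{2\LipGrad})$, which for small $\mu$ is far below $\mu/128$. For such $m$ the hypothesis holds while $T_1=8\sqrt{2}(\mu/m)^{1/2}>128$.
\item In the case $m\ge\mu$, absorbing $T_4\le\sqrt{8\sqrt{2}}(1-\SufficientConstant)^{-1/2}\LipGrad^{1/2}\eta_1^{-1}\mu^{-1/2}$ into the $\mu^{-1/2}$ part of $\remainder(\mu)$ does not fit. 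That part is exactly $T_2+T_3$, and $\sqrt{8\sqrt2}>2\sqrt2$.
\item $\alphaInd{k}^{(4)}$ is not a geometric mean of $\alphaInd{k}^{(1)}$ and $\alphaInd{k}^{(2)}$. Matching the $m^{-3/4}$ forces equal weights, which gives $\LipGrad^{-1/4}\mu$ rather than $\LipGrad^{-1/2}\mu^{5/4}$.
\end{enumerate}

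The fix is the idea your final paragraph gestures at, applied uniformly and with no split on $m/\mu$; this is exactly Claim~\ref{claim:min-barrier-progress} in the paper. Since $\max_i T_i=T_{p^\ast}$ for the index $p^\ast$ attaining $\min_i\alphaInd{k}^{(i)}$, only the binding index needs a bound. The hypothesis $\alphaInd{k}^{(p^\ast)}<1$ is available precisely for that index. Write $\alphaInd{k}^{(p)}=\zeta_p m^{-\theta_p}$ with $(\theta_1,\dots,\theta_7)=(\tfrac12,1,1,\tfrac34,1,\tfrac14,1)$. The hypothesis gives $m>\zeta_{p^\ast}^{1/\theta_{p^\ast}}$, hence $m\,\alphaInd{k}^{(p^\ast)}=\zeta_{p^\ast}m^{1-\theta_{p^\ast}}\ge\zeta_{p^\ast}^{1/\theta_{p^\ast}}$ because $\theta_{p^\ast}\le 1$. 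The seven numbers $\mu/\zeta_p^{1/\theta_p}$ are:
\begin{itemize}
\item $128=(8\sqrt2)^2$ for $p=1$;
\item your $T_2,T_3,T_5,T_7$ for $p=2,3,5,7$;
\item $2^{7/3}\LipGrad^{2/3}(1-\SufficientConstant)^{-2/3}\eta_1^{-4/3}\mu^{-2/3}$ for $p=4$;
\item $12800(1-\SufficientConstant)^{-2}=(80\sqrt2)^2(1-\SufficientConstant)^{-2}$ for $p=6$.
\end{itemize}
Their sum is at most $\MainConst\mu^{-2/3}+\remainder(\mu)$, since $2^{7/3}+2<8$ and $(1-\SufficientConstant)^{-1/3}\le(1-\SufficientConstant)^{-2/3}$. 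In particular, $p=4$ belongs with $p=5$ in the $\mu^{-2/3}$ main term, which is where the constant $8$ comes from.
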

To prove \Cref{lemBarrierProgress} we first prove Claim~\ref{claim:min-barrier-progress} which
recasts the $\alphaInd{k}$ as a generic formula, allowing us to produce a generic bound on 
$\barrier(x_k) - \barrier(x_{k+1})$.
We can then use Claim~\ref{claim:min-barrier-progress} to mechanically 
derive \Cref{lemBarrierProgress}.

\begin{claim}\label{claim:min-barrier-progress}
Suppose that there exists constants $\zeta_{k,1}, \dots, \zeta_{k,P} \in (0,\infty)$ and $\theta_{1}, \dots, \theta_{P} \in (0,1]$ such that $\min_{p \in [P]}  \ \frac{\zeta_{k,p}}{(-\mk)^{\theta_{p}}} < 1$ then, if $\alpha_k \ge \backtrackingFactor \min_{p \in [P]} \frac{\zeta_{k,p}}{(-\mk)^{\theta_{p}}}$, we have:
\[
\barrier(x_k) - \barrier(x_{k+1}) \ge \backtrackingFactor  \SufficientConstant \cdot  \min_{p \in [P]} \zeta_{k,p} ^ \frac{1}{\theta_{p}}.
\]
\end{claim}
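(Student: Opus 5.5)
The plan is to combine the sufficient decrease condition with \Cref{lem:model-alpha-k-bound}, and then to lower bound $\alpha_k(-\mk)$ by a case-free argument that only uses the index achieving the minimum.

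First, the barrier reduction is bounded by the step size times the predicted reduction. By \Cref{eq:sufficient-decrease}, we have $\barrier(x_{k+1}) \le \barrier(x_k) + \SufficientConstant \Mk(\alpha_k \dir{x}_k)$. By \Cref{lem:model-alpha-k-bound}, $\Mk(\alpha_k \dir{x}_k) \le \alpha_k \mk$. Combining the two gives
\[
\barrier(x_k) - \barrier(x_{k+1}) \ge \SufficientConstant \alpha_k (-\mk).
\]
Note that $-\mk > 0$ by \Cref{eq:direction-condition-eta1}, so the quantities $(-\mk)^{\theta_p}$ in the claim are well defined and positive.

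Second, I will lower bound $\alpha_k(-\mk)$ using the minimizing index. Let $p^\star \in [P]$ attain $\min_{p} \zeta_{k,p}/(-\mk)^{\theta_p}$, and write $\zeta = \zeta_{k,p^\star}$ and $\theta = \theta_{p^\star}$. The hypothesis on $\alpha_k$ gives
\[
\alpha_k(-\mk) \ge \backtrackingFactor\, \zeta (-\mk)^{1-\theta}.
\]
The hypothesis that this minimum is below one gives $\zeta < (-\mk)^{\theta}$. Since $\theta > 0$, this is equivalent to $-\mk > \zeta^{1/\theta}$. Because $1-\theta \ge 0$ (this is exactly where $\theta_p \in (0,1]$ is used), the map $t \mapsto t^{1-\theta}$ is nondecreasing on $(0,\infty)$. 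Hence
\[
(-\mk)^{1-\theta} \ge \zeta^{(1-\theta)/\theta},
\]
and therefore
\[
\zeta (-\mk)^{1-\theta} \ge \zeta^{1/\theta} \ge \min_{p} \zeta_{k,p}^{1/\theta_p}.
\]

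Finally, chaining the two steps yields $\barrier(x_k)-\barrier(x_{k+1}) \ge \backtrackingFactor \SufficientConstant \min_p \zeta_{k,p}^{1/\theta_p}$, as claimed. The only delicate point is the second step: the argument must use the index $p^\star$ achieving the minimum together with the condition that the minimum is less than one. Using the minimum alone would not suffice, because the factor $(-\mk)^{1-\theta}$ has to be controlled from below, and that requires the lower bound $-\mk > \zeta^{1/\theta}$ coming from that same index. Everything else is routine.
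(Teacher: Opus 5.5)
Your proposal is correct and follows essentially the same argument as the paper. You combine the sufficient decrease condition with Fact~\ref{lem:model-alpha-k-bound} to get $\psi_{\mu}(x_k)-\psi_{\mu}(x_{k+1}) \ge C\alpha_k(-M_k)$, then use the minimizing index $p^\star$, the consequence $-M_k > \zeta_{k,p^\star}^{1/\theta_{p^\star}}$ of the minimum being below one, and $1-\theta_{p^\star}\ge 0$ to bound $\zeta_{k,p^\star}(-M_k)^{1-\theta_{p^\star}}$ below by $\zeta_{k,p^\star}^{1/\theta_{p^\star}}$; your remark that $-M_k>0$, so the powers are well defined, is a point the paper leaves implicit.
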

\begin{proof}
Let $p^* = \arg\min_{p \in [P]} \frac{\zeta_{k,p}}{(-\mk)^{\theta_{p}}}$. Since $\frac{\zeta_{k,p^*}}{(-\mk)^{\theta_{p^*}}} < 1$, we have:
\[
-\mk > \zeta_{k,p^*}^{1/\theta_{p^*}}.
\]
Using \Cref{lem:model-alpha-k-bound} and the sufficient decrease condition (\cref{eq:sufficient-decrease}):
\begin{flalign*}
\barrier(x_k) - \barrier(x_{k+1}) = \barrier(x_k) - \barrier(x_k + \alpha_k \dir{x}_k) &\geq  -\SufficientConstant \cdot \Mk(\alpha_k \dir{x}_k)  
\ge \SufficientConstant \cdot \alpha_k (-\mk) \\
&\ge \backtrackingFactor \SufficientConstant \cdot \frac{\zeta_{k,p^*}}{(-\mk)^{\theta_{p^*}}} \cdot (-\mk) \\
&= \backtrackingFactor \SufficientConstant \cdot \zeta_{k,p^*} \cdot (-\mk)^{1-\theta_{p^*}}.
\end{flalign*}
Since $0 < \theta_{p^*} \le 1$, we have $1 - \theta_{p^*} \ge 0$. Rearranging $\frac{\zeta_{k,p}}{(-\mk)^{\theta_{p}}} < 1$ gives
$-\mk > \zeta_{k,p^*}^{1/\theta_{p^*}}$ and thus
\[
\frac{\barrier(x_k) - \barrier(x_{k+1})}{\backtrackingFactor \SufficientConstant} \ge \zeta_{k,p^*} \cdot (-\mk)^{1-\theta_{p^*}} 
\ge \zeta_{k,p^*} \cdot  \left(\zeta_{k,p^*}^{1/\theta_{p^*}}\right)^{1-\theta_{p^*}}
\ge  \zeta_{k,p^*} \cdot \zeta_{k,p^*}^{(1-\theta_{p^*})/\theta_{p^*}} 
= \zeta_{k,p^*}^{1/\theta_{p^*}}
\]
as desired.
\end{proof}

\begin{proof}[Proof of \Cref{lemBarrierProgress}]
Using Claim~\ref{claim:min-barrier-progress}, we will derive the reduction in the log barrier function for all the lower bound on the step size $\alpha_k$ from Lemma~\ref{lemMinimumStepSizeIPM}, i.e, $\alpha_k \ge \backtrackingFactor \min_{i \in [7]} \alphaInd{k}^{(i)}$.
Note that by our definition of $\alphaInd{k}^{(i)} \text{ } \forall i \in [7]$, we can write each $\alphaInd{k}^{(i)}$ in the format $\frac{\zeta_{k,i}}{(-\mk)^{\theta_{i}}}$ with  $\theta_{i} \le 1$. Therefore, by Claim~\ref{claim:min-barrier-progress}, we get:
\begin{flalign*}
\barrier(x_k) - \barrier(x_{k+1}) \ge 
\SufficientConstant \backtrackingFactor \cdot \min\Bigg\{ 
&\frac{\mu}{128}, 
\frac{\mu^{3/2} \eta_1}{2\sqrt{2 \LipGrad}}, 
\frac{(1 - \SufficientConstant)^{1/2} \mu^{3/2} \eta_1^{3/2}}{\LipHess^{1/2}}, 
\frac{(1 - \SufficientConstant)^{2/3} \mu^{5/3} \eta_1^{4/3}}{(8\sqrt{2})^{2/3} \LipGrad^{2/3}}, \\
&\frac{(1 - \SufficientConstant)^{1/3} \mu^{5/3} \eta_1^{4/3}}{2 \LipGrad^{2/3}}, 
\frac{(1 - \SufficientConstant)^{2} \mu}{12800}, 
\frac{(1 - \SufficientConstant)^{1/5} \mu^{8/5} \eta_1^{6/5}}{40^{1/5} \LipGrad^{3/5}} 
\Bigg\}.
\end{flalign*}
\end{proof}

Recall that \Cref{lemBarrierProgress} shows that if   
$\min_{i \in [7]} \alphaInd{k}^{(i)} < 1$
then there is significant reduction in the barrier function.
\Cref{lemBarrierTermination} complements \Cref{lemBarrierProgress}
by showing if   
$\min_{i \in [7]} \alphaInd{k}^{(i)} \ge 1$
then the algorithm terminates.
The analog of
\Cref{lemBarrierTermination} in the unconstrained setting 
is \Cref{lem:gradient-bound_model-IPM-unconstrained}.

\begin{lemma}\label{lemBarrierTermination}
Suppose Assumption~\ref{assume-lip-deriv} holds.
If $\min_{i \in [7]} \alphaInd{k}^{(i)} \ge 1$ then, $(x_{k+1}, \hat{y}_{k+1})$ is a 
$\mu$-approximate first-order SIP where $\hat{y}_{k+1}$ is defined in \eqref{eq:hat-y-k+1}.
\end{lemma}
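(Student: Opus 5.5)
Since $\min_{i \in [7]} \alphaInd{k}^{(i)} \ge 1$, \Cref{lemMinimumStepSizeIPM} gives $\alpha_k = 1$, so $x_{k+1} = x_k + \dir{x}_k = \hat{x}_{k+1}$. In particular $\alphaInd{k}^{(1)},\alphaInd{k}^{(2)} \ge 1$, so \Cref{lemPrimalDualApproxResultTwoAlpha} with $\alpha=1$ gives $\kappa_1 \le 1/4$. \Cref{lemPrimalDualApproxResultTwo} then gives $\Convex\{x_k, x_{k+1}\} \subseteq \X$, which is the hypothesis of \Cref{lem:complementary-and-gradient-Lagrangian-error}. Two inequalities will be used throughout:
\[
-\mk \le \mu/128 \quad\text{(from $\alphaInd{k}^{(1)} \ge 1$)}, \qquad \| \dir{x}_k\|_2 \le \frac{-\mk}{\mu \ykPlusOne \eta_1}, \quad \ykPlusOne = \sqrt{\|y_k\|_1+1}.
\]
Combining the first with \Cref{lem:invS-ds-bound} gives $\|S_k^{-1}\dir{s}_k\|_2 \le 1/8$.

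\emph{Step 1: positivity of $\hat{y}_{k+1}$ and centrality.} We have $\hat{y}_{k+1} = y_k + \dir{y}_k = Y_k(\ones - S_k^{-1}\dir{s}_k)$. Since $\|S_k^{-1}\dir{s}_k\|_\infty \le 1/8$, this gives $\hat{y}_{k+1} \ge \tfrac{7}{8} y_k > \zeros$. For \eqref{eq:first-order-SIP-full:centrality}, I bound the two terms in \eqref{eq:new-general-comp-bound}.
\begin{itemize}
\item The first term satisfies $\mu\|S_k^{-1}\dir{s}_k\|_2^2 \le 2(-\mk) \le \mu/64$.
\item For the second term, use $\|y_k\|_2 \le \ykPlusOne^2$ together with the bound on $\|\dir{x}_k\|_2$. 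This gives at most $\tfrac{9}{16}\LipGrad (-\mk)^2/(\mu^2\eta_1^2)$. Since $\alphaInd{k}^{(2)} \ge 1$ means $(-\mk)^2 \le \mu^3\eta_1^2/(8\LipGrad)$, the term is at most $9\mu/128$.
\end{itemize}
Hence $\|\hat S_{k+1}\hat y_{k+1} - \mu\ones\|_2 \le \mu/2$, which is \eqref{eq:first-order-SIP-full:centrality}. Because $\hat y_{k+1} > \zeros$ and each product $\cons_i(x_{k+1})\hat y_{k+1,i} \ge \mu/2 > 0$, we also get $\cons(x_{k+1}) > \zeros$. This completes \eqref{eq:first-order-SIP-full:feasible}.

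\emph{Step 2: gradient of the Lagrangian.} By the primal-dual system \eqref{eq:primal-dual-interior-point-search-direction}, the linearized quantity inside \eqref{eq:LagError} equals $-\delta_k \dir{x}_k$. Therefore
\[
\|\grad_x\Lag(x_{k+1},\hat y_{k+1})\|_2 \le \delta_k\|\dir{x}_k\|_2 + \LipGrad\|y_k\|_2\|\dir{x}_k\|_2\|S_k^{-1}\dir{s}_k\|_2 + \tfrac{\LipHess}{2}\ykPlusOne^2\|\dir{x}_k\|_2^2 .
\]
The three terms are handled as follows.
\begin{itemize}
\item \emph{Regularization term.} Since $\alpha=1$ satisfies \eqref{eq:sufficient-decrease}, the first alternative in \eqref{eq:direction-condition-eta2} fails. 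Hence $\delta_k\|\dir{x}_k\|_2 \le 2\eta_2\mu\ykPlusOne$.
\item \emph{$\LipGrad$ term.} Using the $\dir{x}_k$ bound and \Cref{lem:invS-ds-bound}, it is at most $\ykPlusOne\sqrt{2}\LipGrad(-\mk)^{3/2}/(\mu^{3/2}\eta_1)$. Squaring $\alphaInd{k}^{(4)} \ge 1$ gives $(-\mk)^{3/2} \le (1-\SufficientConstant)\mu^{5/2}\eta_1^2/(8\sqrt2\LipGrad)$. So this term is at most $\tfrac18(1-\SufficientConstant)\eta_1\mu\ykPlusOne$.
\item \emph{$\LipHess$ term.} It is at most $\LipHess(-\mk)^2/(2\mu^2\eta_1^2)$. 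By $\alphaInd{k}^{(3)} \ge 1$ this is at most $\tfrac12(1-\SufficientConstant)\eta_1\mu \le \tfrac12(1-\SufficientConstant)\eta_1\mu\ykPlusOne$, using $\ykPlusOne \ge 1$.
\end{itemize}
Adding the three bounds gives $\|\grad_x\Lag(x_{k+1},\hat y_{k+1})\|_2 \le \big(2\eta_2 + \tfrac58(1-\SufficientConstant)\eta_1\big)\mu\ykPlusOne$.

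\emph{Step 3: relating $\ykPlusOne$ to $\hat{y}_{k+1}$.} From $\hat y_{k+1} \ge \tfrac78 y_k$ we get $\|\hat y_{k+1}\|_1 + 1 \ge \tfrac78\ykPlusOne^2$, so $\ykPlusOne \le \sqrt{8/7}\,\sqrt{\|\hat y_{k+1}\|_1+1}$. The algorithm's requirement $2\eta_2 + \tfrac58(1-\SufficientConstant)\eta_1 \le \sqrt{7/8}$ then yields \eqref{eq:first-order-SIP-full:grad-lag}.

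The main obstacle is Step 2. There each Taylor-error term must be matched to the correct threshold among $\alphaInd{k}^{(i)}$ so that it scales as $\mu\ykPlusOne$ with constants compatible with the algorithm's requirement. If the exponent bookkeeping fails for some term, I would fall back on the thresholds $\alphaInd{k}^{(5)}$–$\alphaInd{k}^{(7)}$, which are also at least one.
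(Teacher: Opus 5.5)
Your proposal is correct and follows the paper's proof essentially step for step: $\alpha_k=1$ via \Cref{lemMinimumStepSizeIPM}; $\|S_k^{-1}\dir{s}_k\|_2\le 1/8$ from $\alphaInd{k}^{(1)}\ge 1$; positivity of $\hat y_{k+1}\ge \tfrac78 y_k$; the $\mu/64+9\mu/128$ complementarity bound via $\alphaInd{k}^{(2)}$; the three-term Lagrangian-gradient bound via $\alphaInd{k}^{(3)},\alphaInd{k}^{(4)}$ and the failed first alternative of \eqref{eq:direction-condition-eta2}; and finally the $\sqrt{7/8}$ parameter requirement. The differences are minor and cosmetic. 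You verify the $\Convex\{x_k,x_{k+1}\}\subseteq\X$ hypothesis explicitly through \Cref{lemPrimalDualApproxResultTwoAlpha} and deduce $\cons(x_{k+1})>\zeros$ from centrality, whereas the paper takes strict feasibility of $x_{k+1}$ from the algorithm. Your contingency fallback to $\alphaInd{k}^{(5)}$--$\alphaInd{k}^{(7)}$ is never needed.
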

\begin{proof}
Recall
$\dir{s}_k := \grad \cons(x_k) \dir{x}_k$.
By our assumption that $\min_{i \in [7]} \alphaInd{k}^{(i)} \ge 1$, \Cref{lemMinimumStepSizeIPM} implies that $\alpha_k = 1$ and 
thus $x_{k+1} = x_k + \dir{x}_k \in \interior{\X}$. For brevity, denote $\hat{y}_{k+1} = y_k + \dir{y}_k$, and $s_{k+1} = \cons(x_{k+1})$.
Also, using $\min\{\alphaInd{k}^{(1)},\alphaInd{k}^{(2)}\}\ge 1$ and \eqref{eq:bound-slack-variable}, we have
\begin{flalign}\label{eq:Sinv-bound-by-constant}
\|S_k^{-1}\dir{s}_k\|_2 \underle{(a)} \sqrt{\frac{-2\mk}{\mu}} \underle{(b)} \frac{1}{8}
\end{flalign}
where $(a)$ uses \Cref{lem:invS-ds-bound} and $(b)$ uses $-\mk \le \mu / 128$
from $\alphaInd{k}^{(1)} \ge 1$.
Next note that using $S_k \dir{y}_k + Y_k \dir{s}_k + S_k y_k = \mu \ones$ and $S_k y_k=\mu\ones$, 
we have 
\begin{flalign}\label{eq:dir-y-formula}
\dir{y}_k=-Y_k S_k^{-1} \dir{s}_k
\end{flalign}
Thus $\| Y_k^{-1} \dir{y}_k \|_{\infty} \le \| Y_k^{-1} \dir{y}_k \|_2 = \|S_k^{-1}\dir{s}_k\|_2 \le 1/8$ and using
\Cref{eq:Sinv-bound-by-constant}  we conclude $Y_k^{-1} (y_{k} + \dir{y}_k) > 0$ and 
therefore \eqref{eq:first-order-SIP-full:feasible} holds.
The remainder of the proof establishes that \eqref{eq:first-order-SIP-full:centrality} and \eqref{eq:first-order-SIP-full:grad-lag} hold.
 
We first show the complementarity condition \eqref{eq:first-order-SIP-full:centrality} holds. Observe that
{
\footnotesize
\begin{flalign*}
\| \diag(\hat{y}_{k+1}) s_{k+1} - \mu \ones\|_2
&\underle{(a)} \mu \|S_k^{-1} \dir{s}_k\|_2^2 + \frac{\LipGrad}{2} \| y_k \|_{2} (1 + \|S_k^{-1} \dir{s}_k\|_2) \| \dir{x}_k \|_2^2 \\
&\underle{(b)} \frac{\mu}{64} + \frac{9 \LipGrad}{16} \| y_k \|_{2} \| \dir{x}_k \|_2^2 \\
&\underle{(c)} \frac{\mu}{64} + \frac{9 \LipGrad}{16} \| y_k \|_{2} \left(\frac{-\mk}{\eta_1 \mu \sqrt{\|y_k\|_1 + 1}}\right)^2 \\
&\underle{(d)} \frac{\mu}{64} + \frac{\LipGrad}{2} \left(1 + \frac{1}{8} \right) \cdot \frac{\mu}{8\LipGrad} \cdot \frac{\| y_k \|_2}{1 + \| y_k \|_1} \le \frac{\mu}{64} + \frac{9}{128}\mu = \frac{11}{128}\mu < \frac{1}{2}\mu,
\end{flalign*}
}%
where inequality $(a)$ uses \eqref{eq:new-general-comp-bound}, inequality $(b)$ uses \Cref{eq:Sinv-bound-by-constant},
$(c)$ uses \eqref{eq:direction-condition-eta1} with $\varepsilon_k = \mu \sqrt{1 + \| y_k \|_1}$, 
and $(d)$ uses $-\mk \le \frac{\eta_1 \mu^{3/2}}{2\sqrt{2}\,\LipGrad^{1/2}}$ from $\alphaInd{k}^{(2)} \ge 1$.
Since $\|\cdot\|_\infty\le \|\cdot\|_2$, the complementarity condition \eqref{eq:first-order-SIP-full:centrality} holds.
 
Now we show that the gradient of the Lagrangian is small, i.e., \eqref{eq:first-order-SIP-full:grad-lag} holds.
Substituting \Cref{eq:dir-y-formula} and $\dir{s}_k = \grad \cons(x_k) \dir{x}_k$ into $(\grad^2 \barrier(x_k) + \eye \delta_k)\dir{x}_k = -\grad \barrier(x_k)$ gives
\[
\grad_{x} \Lag(x_k, y_k) + \grad_{xx}^2 \Lag(x_k, y_k)^\top \dir{x}_k - \grad \cons(x_k)^\top \dir{y}_k = -\delta_k \dir{x}_k.
\]
Combining this with \Cref{eq:LagError} yields
\begin{flalign*}
\| \delta_k \dir{x}_k + \grad_{x} \Lag(x_{k+1}, \hat{y}_{k+1}) \|
&\le \frac{\LipHess}{2}(\|y_k\|_1 + 1)\|\dir{x}_k\|_2^2 + \LipGrad \|y_k\|_2 \|\dir{x}_k\|_2 \|S_k^{-1}\dir{s}_k\|_2.
\end{flalign*}
Therefore,
\begin{flalign*}
\|\grad_{x}\Lag(x_{k+1}, \hat{y}_{k+1})\|
&\le \|\delta_k \dir{x}_k\| + \frac{\LipHess}{2}(\|y_k\|_1+1)\|\dir{x}_k\|_2^2 + \LipGrad\|y_k\|_2 \|\dir{x}_k\|_2 \|S_k^{-1}\dir{s}_k\|_2 \\
&\underle{(a)} 2\mu\eta_2\sqrt{\|y_k\|_1+1} + \frac{\LipHess}{2}(\|y_k\|_1+1)\|\dir{x}_k\|_2^2 + \LipGrad\|y_k\|_2 \|\dir{x}_k\|_2 \|S_k^{-1}\dir{s}_k\|_2 \\
&\underle{(b)} 2\mu\eta_2\sqrt{\|y_k\|_1+1} + \frac{\LipHess}{2}\left(\frac{-\mk}{\mu\eta_1}\right)^2 + \LipGrad\|y_k\|_2\cdot\frac{2^{1/2}(-\mk)^{3/2}}{\mu^{3/2}\eta_1\sqrt{\|y_k\|_1+1}} \\
&\underle{(d)} 2\mu\eta_2\sqrt{\|y_k\|_1+1} + \frac{\LipHess}{2\eta_1^2}\left(\frac{-\mk}{\mu}\right)^2 + \frac{2^{1/2}\LipGrad\sqrt{\|y_k\|_1+1}}{\eta_1}\left(\frac{-\mk}{\mu}\right)^{3/2} \\
&\underle{(e)} 2\mu\eta_2\sqrt{\|y_k\|_1+1} + \frac{(1-\SufficientConstant)\eta_1\mu}{2} + \frac{(1-\SufficientConstant)\eta_1\mu\sqrt{\|y_k\|_1+1}}{8} \\
&\le \mu\sqrt{\|y_k\|_1+1}\left(2\eta_2 + \frac{5}{8}(1-\SufficientConstant)\eta_1\right) \underle{(f)} \mu\sqrt{7/8}\sqrt{\|y_k\|_1+1}
\underle{(g)} \mu\sqrt{\|\hat{y}_{k+1}\|_1+1}
\end{flalign*}
where inequality $(a)$ uses $\|\dir{x}_k\|_2 \delta_k \le 2\eta_2 \varepsilon_k = 2\eta_2 \mu \sqrt{\|y_k\|_1 + 1}$ which 
holds by \Cref{eq:direction-condition-eta2} and $\barrier(x_k + \dir{x}_k) \le \barrier(x_k) + \SufficientConstant\cdot\Mk(\dir{x}_k)$
since $\alpha_k=1$; $(b)$ uses~\eqref{eq:direction-condition-eta1} from Condition~\ref{condition:search-dir}
and \Cref{lem:invS-ds-bound};
inequality $(d)$ uses $\|y_k\|_2 \le \|y_k\|_1 \le \|y_k\|_1 + 1$;
inequality $(e)$ uses
\[
-\mk \le \min\left\{ \frac{(1-\SufficientConstant)^{1/2}\mu^{3/2}\eta_1^{3/2}}{\LipHess^{1/2}},\; \frac{(1-\SufficientConstant)^{2/3}\mu^{5/3}\eta_1^{4/3}}{(8\sqrt{2})^{2/3}\LipGrad^{2/3}} \right\}
\]
from $\min\{\alphaInd{k}^{(3)}, \alphaInd{k}^{(4)}\} \ge 1$;
inequality $(f)$ uses that $2\eta_2 + \tfrac{5}{8}(1-\SufficientConstant)\eta_1 \le \sqrt{7/8}$; and inequality $(g)$ uses 
$\hat{y}_{k+1} = Y_{k} (\mathbf{1} + Y_{k}^{-1} \dir{y}_k) \ge y_k (1 - \| Y_{k}^{-1} \dir{y}_k \|_{\infty}) \ge (7/8) y_k$
where the last inequality uses \Cref{eq:Sinv-bound-by-constant} and \Cref{eq:dir-y-formula}.
\end{proof}

\Cref{lemBarrierProgress} shows that if $\min_{i \in [7]} \alphaInd{k}^{(i)} < 1$ then we obtain a
 reduction in the barrier function of at least $\Omega(\mu^{5/3})$.
\Cref{lemBarrierTermination} shows that if $\min_{i \in [7]} \alphaInd{k}^{(i)} \ge 1$ then the algorithm terminates.
Combining these two results together using standard telescoping arguments gives \Cref{main-theorem-IPM}.

\begin{theorem}\label{main-theorem-IPM}
Suppose Assumption~\ref{assume-lip-deriv} holds. 
Recall the definition of $\remainder(\mu)$ from \Cref{eq:formula-for-remainder}.
Then, \Cref{alg:IPM} terminates after at most
\begin{flalign*}
K \le 1 + \frac{\barrier(x_1) -\barrier^\star}{\SufficientConstant \backtrackingFactor \mu} \left( \MainConst{} \mu^{-2/3} + \remainder(\mu) \right)
\end{flalign*}
iterations with a $\mu$-approximate first-order SIP, i.e., \Cref{eq:first-order-SIP-full:first-order} holds.
\end{theorem}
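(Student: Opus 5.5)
The plan mirrors the proof of \Cref{thm:bounded-initial-suboptimality-unconstrained-case}, with \Cref{lemBarrierProgress} and \Cref{lemBarrierTermination} replacing their unconstrained analogues.

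Fix an iteration count $K$ and suppose \Cref{alg:IPM} has not returned during iterations $1,\dots,K-1$. In particular, no iteration $k \le K-1$ passed termination check I, so $\|\grad \barrier(x_k)\|_2 > \varepsilon_k$ for each such $k$. Hence Condition~\ref{condition:search-dir} can be satisfied, and $\dir{x}_k$, $\alpha_k$, $x_{k+1}$ are all well defined. Every iterate stays in $\interior{\X}$, because the sufficient decrease condition \eqref{eq:sufficient-decrease} keeps $\barrier(x_{k+1})$ finite.

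Next, for each $k \le K-1$, termination check II also failed at iteration $k$. By the contrapositive of \Cref{lemBarrierTermination}, this means $\min_{i\in[7]}\alphaInd{k}^{(i)} < 1$. \Cref{lemBarrierProgress} then applies and gives
\[
\barrier(x_k)-\barrier(x_{k+1}) \ge \SufficientConstant\backtrackingFactor\mu\left(\MainConst{}\mu^{-2/3}+\remainder(\mu)\right)^{-1}.
\]

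Summing these bounds over $k=1,\dots,K-1$ telescopes to
\[
\barrier(x_1)-\barrier(x_K)\ge (K-1)\,\SufficientConstant\backtrackingFactor\mu\left(\MainConst{}\mu^{-2/3}+\remainder(\mu)\right)^{-1}.
\]
Since $x_K\in\interior{\X}$, we have $\barrier(x_K)\ge\barrier^\star$. Rearranging gives exactly the stated bound on $K$ for any iteration reached without termination, so the algorithm must terminate within that many iterations. If $\barrier^\star=-\infty$ the bound is vacuous, and nothing needs proving.

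It remains to check that termination yields a $\mu$-approximate SIP. Both termination checks return only pairs that satisfy \Cref{eq:first-order-SIP-full:first-order}, so this holds by construction.

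The main obstacle is bookkeeping rather than new analysis: confirming that the hypotheses of \Cref{lemBarrierProgress} and \Cref{lemBarrierTermination} hold at every non-terminal iteration. Concretely, one must verify that $\|\grad\barrier(x_k)\|_2>\varepsilon_k$ really follows from the failure of check I. This rests on the equivalence of check I with $\|\grad\barrier(x_k)\|\le\varepsilon_k$: centrality \eqref{eq:first-order-SIP-full:centrality} holds trivially since $S_ky_k=\mu\ones$, and $\grad_x\Lag(x_k,y_k)=\grad\barrier(x_k)$.
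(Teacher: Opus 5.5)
Your proposal is correct and follows the paper's proof: you use the contrapositive of \Cref{lemBarrierTermination} to get $\min_{i\in[7]}\alphaInd{k}^{(i)}<1$ at every non-terminal iteration, then \Cref{lemBarrierProgress}, then telescope down to the infimum of the barrier. Your extra bookkeeping is valid and is something the paper leaves implicit: failure of termination check I gives $\|\grad\psi_\mu(x_k)\|_2>\varepsilon_k$, so Condition~\ref{condition:search-dir} can be satisfied.
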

\begin{proof}
Let $K$ be the iteration that the algorithm terminates, with $K=\infty$ signifying the algorithm 
does not terminate.
By \Cref{lemBarrierTermination} we deduce that $\min_{i \in [7]} \alphaInd{k}^{(i)} < 1$
for all $k \in [1,K)$.
	Using \Cref{lemBarrierProgress} and telescoping, we get:
	\begin{flalign*}
		\barrier(x_1) -\barrier^\star &\ge \sum_{k=1}^{K-1} (\barrier(x_k) - \barrier(x_{k+1})) \\ 
		&\geq (K-1) \SufficientConstant \backtrackingFactor \left( \MainConst{} \mu^{-5/3} + \mu^{-1} \remainder(\mu) \right)^{-1}.
	\end{flalign*}
	 Rearranging the above inequality for $K$ gives the desired result.
\end{proof}

\section{Convergence analysis of our barrier method with annealing scheme}\label{sec:optimality-guarantees-convexity-regularity-condition}

\newcommand{\InflationFactor}{\kappa}
\newcommand{\doubleBarX}[0]{\bar{\bar{x}}}
\newcommand{\doubleBarY}[0]{\bar{\bar{y}}}

\Cref{alg:IPM} focused on the case that the barrier parameter $\mu$ was fixed.
Our annealing scheme is described in \Cref{alg:Full-IPM} which calls \Cref{alg:IPM} which finds an 
approximate stationary interior point and decreases $\mu$, as per a standard interior point method \cite[Section~19.4]{nocedal2006numerical}.

\newcommand{\muBacktrackingFactor}[0]{\sigma}

\begin{condition}[Properties the new primal-dual iterate must satisfy when we decrease the barrier parameter]
\label{condition:new-x-y}
Suppose that the iterate $(\doubleBarX_{j+1}, \doubleBarY_{j+1})$ and barrier parameter $\mu_{j+1}$ satisfy
\begin{subequations}\label{eq:condition:new-x-y}
\begin{flalign}
\mu_{j+1} &\le \muBacktrackingFactor \mu_j \\
\cons(\doubleBarX_{j+1}) &\ge 0 \\
\| \grad_x \Lag(\doubleBarX_{j+1}, \doubleBarY_{j+1}) \|&\le \InflationFactor \mu_{j+1} \sqrt{1 + \| \doubleBarY_{j+1} \|_1} \label{eq:kappa-bound-gradient-of-Lagrangian} \\
\frac{1}{\InflationFactor} \le & ~ \frac{\cons_i(\doubleBarX_{j+1}) [\doubleBarY_{j+1}]_i}{\mu_{j+1}} \le \InflationFactor \quad \quad  \forall i \in [\NumCon]. \label{eq:complementarity-bound}
\end{flalign}
\end{subequations}
where $\InflationFactor \in (2,\infty)$ and $\muBacktrackingFactor \in [2/\InflationFactor, 1)$ are algorithmic constants. 
\end{condition}

Given a point $(\bar{x}_{j+1}, \bar{y}_{j+1})$ that is a $\mu_j$ approximate stationary interior point, 
i.e., satisfies \Cref{eq:first-order-SIP-full:first-order},
Condition~\ref{condition:new-x-y} can be achieved trivially 
by setting $(\doubleBarX_{j+1}, \doubleBarY_{j+1}) = (\bar{x}_{j+1}, \bar{y}_{j+1})$ and 
$\mu_{j+1} =  \muBacktrackingFactor \mu_{j}$. However, in practice, we will use a more sophisticated 
method, see \Cref{alg:decrease-barrier-parameter}.

\begin{algorithm}[ht!]
	\caption{Line search Newton regularized method applied to the log barrier with decreasing barrier parameter $\mu$.}
	\label{alg:Full-IPM}
	\begin{algorithmic}[1]
		\Function{IPMwithBarrierParameterAnnealing}{$\doubleBarX_1, \mu_1$}
			\State \textbf{Input requirements:} $\doubleBarX_1 \in \interior{\X}$, $\mu_1 \in (0, \infty)$
			\State \textbf{Parameters:} $\SufficientConstant \in (0, 1/2)$, $\eta_1 \in (0, 1/2)$, $\eta_2 \in (\eta_1, 1/2)$, $\InflationFactor \in (2,\infty)$, $\muBacktrackingFactor \in [2/\InflationFactor, 1)$
			\For{$j = 1, \dots, \infty$}
				\State \textbf{\emph{Minimize the log barrier with a fixed barrier parameter:}}
				\State ~$(\bar{x}_{j+1}, \bar{y}_{j+1}) \gets \Call{AdaptiveLineSearchIPM}{\doubleBarX_j, \mu_j}$
				\State \textbf{\emph{Decrease barrier parameter $\mu_{j+1}$ and find new starting point $\doubleBarX_{j+1}$:}}
				\State ~Find $(\doubleBarX_{j+1}, \doubleBarY_{j+1})$ and barrier parameter $\mu_{j+1}$ satisfying Condition~\ref{condition:new-x-y}, e.g., \Cref{alg:method-to-decrease-barrier-parameter}
			\EndFor
			\EndFunction
	\end{algorithmic}
\end{algorithm}

\begin{algorithm}[ht!]
\caption{Practical method to decrease barrier parameter $\mu$ using a standard primal-dual interior point search
direction to select a new iterate so that Condition~\ref{condition:new-x-y} holds.}\label{alg:method-to-decrease-barrier-parameter}
\label{alg:decrease-barrier-parameter}
\begin{algorithmic}[1]
\State \textbf{Input requirements:} $\bar{x}_{j+1} \in \R^{\NumVar}, \bar{y}_{j+1} \in \R^{\NumCon}$, $\mu_j \in (0,\infty)$, and $(\bar{x}_{j+1},\bar{y}_{j+1})$ satisfies \Cref{eq:first-order-SIP-full:first-order}
\State \textbf{Parameter:} $\InflationFactor \in (2,\infty)$, $\muBacktrackingFactor \in [2/\InflationFactor, 1)$
\State $(x^{\mathrm{prev}},y^{\mathrm{prev}},\mu^{\mathrm{prev}}) \gets
\left(\bar{x}_{j+1},\bar{y}_{j+1}, \muBacktrackingFactor \mu_j\right)$
\For{$t = 0,1,2,\ldots$}
	\State $\hat{\mu}_t \gets \muBacktrackingFactor^{t+1}\mu_j$
	\State Calculate search direction $(\dir{x},\dir{y})$ via a typical primal-dual interior point search direction:
	\begin{subequations}\label{eq:typical-primal-dual-interior-point-search-direction}
\begin{flalign}
\bar{S}_{j+1} = \diag(\cons(\bar{x}_{j+1})) \quad \bar{Y}_{j+1} &= \diag(\bar{y}_{j+1}) \\
\grad_{xx}^2 \Lag(\bar{x}_{j+1},\bar{y}_{j+1}) \dir{x} - \grad \cons(\bar{x}_{j+1})^\top \dir{y} &= -\grad_x \Lag(\bar{x}_{j+1},\bar{y}_{j+1})  \\
\grad \cons(\bar{x}_{j+1}) \dir{x} - \dir{s} &= \zeros \\
\bar{S}_{j+1}  \dir{y} + \bar{Y}_{j+1} \dir{s} &= \hat{\mu}_t \ones - \bar{S}_{j+1} \bar{y}_{j+1}.
\end{flalign}
\end{subequations}
	\State $(x^{\mathrm{trial}},y^{\mathrm{trial}}) \gets
	(\bar{x}_{j+1}+\dir{x},\bar{y}_{j+1}+\dir{y})$
	\If{$(x^{\mathrm{trial}},y^{\mathrm{trial}},\hat{\mu}_t)$ does not satisfy Condition~\ref{condition:new-x-y}}
		\State \Return $(x^{\mathrm{prev}},y^{\mathrm{prev}},\mu^{\mathrm{prev}})$
	\EndIf
	\State $(x^{\mathrm{prev}},y^{\mathrm{prev}},\mu^{\mathrm{prev}}) \gets
	(x^{\mathrm{trial}},y^{\mathrm{trial}},\hat{\mu}_t)$
\EndFor
\end{algorithmic}
\end{algorithm}

We provide guarantees for \Cref{alg:decrease-barrier-parameter} in terms of the total number of regularized Newton steps taken by the method
for the unconstrained case (\Cref{thm:unconstrained-case}) and constrained case (\Cref{thm:constrained-case}) respectively. 
One regularized Newton step in \Cref{alg:Full-IPM} corresponds to an iteration of $\Call{AdaptiveLineSearchIPM}{}$.
Thus, we can use the total number of regularized Newton steps of \Cref{alg:Full-IPM} 
as a measure of its computational complexity.

To prove \Cref{thm:unconstrained-case} and 
\Cref{thm:constrained-case} we use a standard argument that after solving
a barrier subproblem we can 
upper bound the suboptimality of 
a new barrier subproblem which 
we can use to reduce our estimate of the number of Newton steps required to solve the next subproblem.
This is very similar to standard restart arguments \cite{o2015adaptive} which are widespread in 
the optimization literature.
In particular, we use \Cref{thm:bounded-initial-suboptimality-unconstrained-case} and \Cref{main-theorem-IPM} to establish 
\Cref{assume:small-mu-gives-good-suboptimality} in the unconstrained and constrained setting respectively,
and then employ \Cref{lem:generic-ipm-guarantee}, which is a generic result about
the number of Newton steps taken by \Cref{alg:Full-IPM} when \Cref{assume:small-mu-gives-good-suboptimality} holds
to provide bounds on the number of iterations our method takes to find an 
$\epsilon$-approximately optimal solution in the unconstrained 
(\Cref{thm:unconstrained-case}) and 
constrained case (\Cref{thm:constrained-case}).

\newcommand{\Rate}[0]{\theta}
\newcommand{\Dconst}[0]{W}

\begin{assumption}\label{assume:small-mu-gives-good-suboptimality}
Suppose that, for any $x \in \interior{\X}$ and $\mu \in (0,\infty)$, the method \\ $\Call{AdaptiveLineSearchIPM}{x, \mu}$ requires at most $1 + (\barrier(x) - \barrier^\star) \mu^{-1} \Rate(\mu)$
iterations to return a point satisfying \Cref{eq:first-order-SIP-full:first-order} where $\Rate : (0,\infty) \rightarrow (0,\infty)$ 
is a nonincreasing function. 
Additionally, assume that there exists a constant $\Dconst > 0$
such that
$\barrier[j+1](\doubleBarX_{j+1}) - \barrier[j+1]^\star \le \Dconst \mu_{j+1}$
and
$f(\doubleBarX_{j+1}) - f_\star \le \Dconst \mu_{j+1}$
for all $j \in \N$.
\end{assumption}

\newcommand{\Jhat}[0]{J}

\begin{lemma}\label{lem:generic-ipm-guarantee}
Suppose \Cref{assume:small-mu-gives-good-suboptimality} holds.
Then, for any $\epsilon \in (0,\infty)$,
there exists an iterate $\doubleBarX_{\Jhat+1}$ of \Cref{alg:Full-IPM} with $\Jhat \le 2 + \log_{1/\muBacktrackingFactor}( \Dconst \mu_1 / \epsilon + 1 / \muBacktrackingFactor)$ such that 
$f(\doubleBarX_{\Jhat+1}) - f_\star \le \epsilon$ and the total number of regularized Newton steps used by \Cref{alg:Full-IPM} to compute $\doubleBarX_{\Jhat+1}$ is at most 
\[
\Jhat +  \Dconst \sum_{j=0}^{\Jhat-2} \Rate\left( \frac{\epsilon}{\Dconst \muBacktrackingFactor^{j}} \right) + (\barrier[1](\doubleBarX_1) - \barrier[1]^\star) \mu_1^{-1} \Rate(\mu_1).
\]
\end{lemma}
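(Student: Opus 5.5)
The plan is to choose $\Jhat$ as the first outer index at which the guaranteed suboptimality bound $\Dconst\mu_{\Jhat+1}$ drops below $\epsilon$. The cost of each call to $\Call{AdaptiveLineSearchIPM}{}$ is then bounded via \Cref{assume:small-mu-gives-good-suboptimality}, and the resulting terms are compared to the stated sum using the geometric decrease of $\mu_j$ and monotonicity of $\Rate$.

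\textbf{Step 1: choice of $\Jhat$ and its size.} Let $\Jhat \ge 1$ be the smallest index such that $\Dconst \mu_{\Jhat+1} \le \epsilon$. Such an index exists because Condition~\ref{condition:new-x-y} gives $\mu_{j+1} \le \muBacktrackingFactor \mu_j$, hence $\mu_{j+1} \le \muBacktrackingFactor^{j}\mu_1 \to 0$. By \Cref{assume:small-mu-gives-good-suboptimality}, $f(\doubleBarX_{\Jhat+1}) - f_\star \le \Dconst\mu_{\Jhat+1} \le \epsilon$.

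If $\Jhat \ge 2$, minimality gives $\Dconst\mu_{\Jhat} > \epsilon$. Combined with $\mu_{\Jhat} \le \muBacktrackingFactor^{\Jhat-1}\mu_1$, this yields $\muBacktrackingFactor^{\Jhat-1} > \epsilon/(\Dconst\mu_1)$, i.e.
\[
\Jhat < 1 + \log_{1/\muBacktrackingFactor}(\Dconst\mu_1/\epsilon).
\]
This is below the claimed bound $2 + \log_{1/\muBacktrackingFactor}(\Dconst\mu_1/\epsilon + 1/\muBacktrackingFactor)$. That bound is also at least $2 + \log_{1/\muBacktrackingFactor}(1/\muBacktrackingFactor) = 3$, so the case $\Jhat = 1$ is covered as well.

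\textbf{Step 2: counting Newton steps per call.} Computing $\doubleBarX_{\Jhat+1}$ requires the outer iterations $j = 1,\dots,\Jhat$, each of which makes one call $\Call{AdaptiveLineSearchIPM}{\doubleBarX_j,\mu_j}$.

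For $j=1$, \Cref{assume:small-mu-gives-good-suboptimality} bounds the number of steps by $1 + (\barrier[1](\doubleBarX_1)-\barrier[1]^\star)\mu_1^{-1}\Rate(\mu_1)$.

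For $2 \le j \le \Jhat$, the second part of the assumption (applied with index $j-1$) gives $\barrier[j](\doubleBarX_j) - \barrier[j]^\star \le \Dconst\mu_j$. The number of steps is therefore at most $1 + \Dconst\Rate(\mu_j)$.

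Summing over $j=1,\dots,\Jhat$, the constant terms contribute exactly $\Jhat$.

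\textbf{Step 3: comparing $\Rate(\mu_j)$ to the sum.} This is the only delicate point: we need lower bounds on $\mu_j$, since $\Rate$ is nonincreasing. Iterating $\mu_{i+1} \le \muBacktrackingFactor\mu_i$ backward from $\Jhat$ gives
\[
\mu_j \ge \muBacktrackingFactor^{-(\Jhat-j)}\mu_{\Jhat} > \frac{\epsilon}{\Dconst\muBacktrackingFactor^{\Jhat-j}} \qquad \text{for } 2\le j\le \Jhat,
\]
where the last inequality uses the minimality of $\Jhat$. Hence $\Rate(\mu_j) \le \Rate\bigl(\epsilon/(\Dconst\muBacktrackingFactor^{\Jhat-j})\bigr)$.

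Reindexing with $i = \Jhat - j$, which ranges over $0,\dots,\Jhat-2$, turns $\Dconst\sum_{j=2}^{\Jhat}\Rate(\mu_j)$ into the stated sum $\Dconst\sum_{i=0}^{\Jhat-2}\Rate\bigl(\epsilon/(\Dconst\muBacktrackingFactor^{i})\bigr)$. When $\Jhat = 1$ this sum is empty, consistent with the fact that only the first call is made.

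Adding the three contributions (the $\Jhat$ constant terms, the first-call term, and this sum) gives the claimed total.
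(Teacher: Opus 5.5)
Your proof is correct and takes essentially the same route as the paper. You choose $J$ as the index at which $W\mu_{J+1}$ first drops to at most $\epsilon$, bound the first call directly, and bound each later call by $1 + W\theta(\mu_j)$ using the assumption. You then use $\mu_j \ge \mu_J/\sigma^{J-j}$, the fact that $\theta$ is nonincreasing, and a reindexing to reach the stated sum. The only difference is cosmetic: your minimal-index definition of $J$ absorbs the paper's separate case $\epsilon/W \ge \mu_1$, and it yields a slightly sharper bound on $J$.
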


\begin{proof}
First, observe that the number of iterations until $\Call{AdaptiveLineSearchIPM}{\doubleBarX_1, \mu_1}$ terminates is at 
most $1 + (\barrier[1](\doubleBarX_1) - \barrier[1]^\star) \mu_1^{-1} \Rate(\mu_1)$.
If $\epsilon / \Dconst \ge \mu_1$ then the result holds with $\Jhat = 1$ because 
$\mu_2 \le \muBacktrackingFactor \mu_1 < \epsilon / \Dconst$ and 
$f(\doubleBarX_{2}) - f_\star \le \Dconst \mu_{2}$; so consider the case that 
$\epsilon / \Dconst < \mu_1$.
Since $\mu_{j+1} \le \muBacktrackingFactor \mu_j$ we deduce that there is an index 
$\Jhat \le 2 + \log_{1/\muBacktrackingFactor}( \Dconst \mu_1 / \epsilon + 1 / \muBacktrackingFactor)$ such that $\mu_{\Jhat+1} < \epsilon / \Dconst \le \mu_{\Jhat}$.
Thus, the total number of regularized Newton steps of \Cref{alg:Full-IPM} across iterations $j=2, \dots, \Jhat$ of \Cref{alg:Full-IPM}  is at most
\begin{flalign*}
\sum_{j=2}^{\Jhat} 1 + \frac{(\barrier[j](\doubleBarX_j) - \barrier[j]^\star) \Rate(\mu_j)}{\mu_j} 
\underle{(i)} \sum_{j=2}^{\Jhat} 1 + \Dconst \Rate(\mu_j) \underle{(ii)}  \sum_{j=0}^{\Jhat-2} 1 + \Dconst  \Rate\left( \frac{\mu_{\Jhat}}{\muBacktrackingFactor^{j}} \right) \underle{(iii)} \Jhat - 1 + \Dconst \sum_{j=0}^{\Jhat-2} \Rate\left( \frac{\epsilon}{\Dconst \muBacktrackingFactor^{j}} \right)
\end{flalign*}
where $(i)$ uses $\barrier[j](\doubleBarX_j) - \barrier[j]^\star \le \Dconst \mu_j$ for all $j \ge 2$, $(ii)$ uses $\mu_{j+1} \le \muBacktrackingFactor \mu_j$ and the fact that $\Rate$ is nonincreasing, and $(iii)$ uses that $ \epsilon / \Dconst \le \mu_{\Jhat}$ and the fact that $\Rate$ is nonincreasing. Adding the bound for the first call to $\Call{AdaptiveLineSearchIPM}{}$ gives the claimed bound on the regularized Newton steps. Furthermore, by $f(\doubleBarX_{j+1}) - f_\star \le \Dconst \mu_{j+1}$ and $\mu_{\Jhat+1} < \epsilon / \Dconst$, we get
$f(\doubleBarX_{\Jhat+1}) - f_\star \le \Dconst \mu_{\Jhat+1} < \epsilon$.
\end{proof}

\Cref{thm:unconstrained-case} provides a convergence guarantee for 
our algorithm in the unconstrained setting, showing that 
it matches the standard convergence $O(\epsilon^{-1/2})$ bound \cite{nesterov2006cubic}. This is a better rate
than the $O(\epsilon^{-2/3})$ result that we prove in the 
constrained setting.

\begin{theorem}[Unconstrained case]\label{thm:unconstrained-case}
Suppose that the problem is unconstrained, i.e., $\barrier(x) = f(x)$ and $\grad^2 f$ is $\LipHess$-Lipschitz. 
Let $D := \diam(\{ x \in \R^{\NumVar} : f(x) \le f(\doubleBarX_1) \})$
and let $\hat{C} := \SufficientConstant \cdot \eta_1 \cdot \min\{\backtrackingFactor \cdot \sqrt{6(1- \SufficientConstant) \eta_1} , \sqrt{2(1-2\eta_2)}\}$ 
be a problem-independent constant.
Then, for any $\epsilon \in (0,\infty)$,
there exists an iterate $\doubleBarX_{\Jhat+1}$ of \Cref{alg:Full-IPM} with $\Jhat \le 2 + \log_{1/\muBacktrackingFactor}( \InflationFactor D \mu_1 / \epsilon + 1 / \muBacktrackingFactor)$ such that 
$f(\doubleBarX_{\Jhat+1}) - f_\star \le \epsilon$ and the total regularized Newton steps \Cref{alg:Full-IPM} uses to compute $\doubleBarX_{\Jhat+1}$ is at most 
\[
\frac{\hat{C}^{-1}\InflationFactor^{3/2}}{1-\sqrt{\muBacktrackingFactor}} \cdot \frac{\LipHess^{1/2} D^{3/2}}{\epsilon^{1/2}} + 
2+\log_{1/\muBacktrackingFactor}( \InflationFactor D \mu_1 / \epsilon + 1 / \muBacktrackingFactor) +
\frac{\hat{C} ^ {-1} (f(\doubleBarX_1) - f_\star)\LipHess^{1/2}}{\mu_1^{3/2}}.
\]
\end{theorem}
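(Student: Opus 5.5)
Our plan is to verify \Cref{assume:small-mu-gives-good-suboptimality} in the unconstrained setting and then read off the bound from \Cref{lem:generic-ipm-guarantee}.

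\emph{Rate function.} By \Cref{thm:bounded-initial-suboptimality-unconstrained-case}, with the problem-independent constant as defined there, $\Call{AdaptiveLineSearchIPM}{x,\mu}$ uses at most $1 + (f(x)-f_\star)\mu^{-1}\Rate(\mu)$ iterations, where $\Rate(\mu) := \hat{C}^{-1}\LipHess^{1/2}\mu^{-1/2}$. This $\Rate$ is nonincreasing. (The constant in the present statement places $\backtrackingFactor$ only on the first term of the $\min$, so it is at least the constant of \Cref{thm:bounded-initial-suboptimality-unconstrained-case}. We will work with the smaller constant, which only enlarges the bound; we will check which version of the constant the final display really requires.)

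\emph{Constant $\Dconst$.} Here $\barrier = f$, so both suboptimality conditions in \Cref{assume:small-mu-gives-good-suboptimality} reduce to $f(\doubleBarX_{j+1}) - f_\star \le \Dconst\mu_{j+1}$.
\begin{itemize}
\item With no constraints, Condition~\ref{condition:new-x-y} gives $\|\grad f(\doubleBarX_{j+1})\| \le \InflationFactor\mu_{j+1}$.
\item Every iterate has $f$ no larger than $f(\doubleBarX_1)$. Each inner call uses sufficient decrease, and we assume the update producing $\doubleBarX_{j+1}$ does not increase $f$; it is the returned point or a point we can control. Hence $\doubleBarX_{j+1}$ lies in the initial sublevel set of diameter $D$.
\item Let $x^\star$ be a minimizer, which lies in the same sublevel set. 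Convexity then gives
\[
f(\doubleBarX_{j+1}) - f_\star \le \grad f(\doubleBarX_{j+1})^\top(\doubleBarX_{j+1}-x^\star) \le \InflationFactor D\mu_{j+1}.
\]
\end{itemize}
So $\Dconst = \InflationFactor D$ works.

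\emph{Applying the lemma.} \Cref{lem:generic-ipm-guarantee} gives the bound on $\Jhat$ and the last two terms directly. The remaining term is
\[
\InflationFactor D\sum_{j\ge 0}\hat{C}^{-1}\LipHess^{1/2}\left(\frac{\InflationFactor D\,\muBacktrackingFactor^{j}}{\epsilon}\right)^{1/2}.
\]
Summing the geometric series in $\sqrt{\muBacktrackingFactor}$ gives
\[
\frac{\hat{C}^{-1}\InflationFactor^{3/2}}{1-\sqrt{\muBacktrackingFactor}}\cdot\frac{\LipHess^{1/2}D^{3/2}}{\epsilon^{1/2}},
\]
as claimed.

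\emph{Main obstacle.} The delicate step is justifying that $\doubleBarX_{j+1}$ stays in the initial sublevel set, since Condition~\ref{condition:new-x-y} alone does not enforce monotonicity of $f$. We would first try the trivial choice $\doubleBarX_{j+1}=\bar{x}_{j+1}$, which is monotone by sufficient decrease. Otherwise we would add the standing requirement that the barrier-decrease step does not increase $f$, or else bound the distance to $x^\star$ another way, for instance through a level set defined by gradient norm.
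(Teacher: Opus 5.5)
Your skeleton is the paper's: take $\theta(\mu)=\hat C^{-1}L_H^{1/2}\mu^{-1/2}$ from \Cref{thm:bounded-initial-suboptimality-unconstrained-case}, get $W=\kappa D$ from $\|\grad f(\bar{\bar{x}}_{j+1})\|_2\le\kappa\mu_{j+1}$ plus convexity on the initial sublevel set, and sum the geometric series in \Cref{lem:generic-ipm-guarantee}. However, the two points you leave open are exactly where the paper's own proof is loose. As written, your proposal does not establish the displayed bound.

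\emph{The constant.} Let $a:=\sqrt{6(1-C)\eta_1}$ and $b:=\sqrt{2(1-2\eta_2)}$. \Cref{thm:bounded-initial-suboptimality-unconstrained-case} only gives the constant $\gamma C\eta_1\min\{a,b\}$. This is strictly smaller than the stated $C\eta_1\min\{\gamma a,b\}$ whenever $b<a$, which includes the default parameters. The paper simply quotes that theorem at this point. Working with the smaller constant, as you propose, yields a \emph{larger} bound than the one claimed, so it does not prove the statement.

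What is missing is a two-threshold version of the warm-up analysis. Put $\ubar{\alpha}_k^{a}:=a\eta_1\mu^{3/2}L_H^{-1/2}/(-M_k)$ and $\ubar{\alpha}_k^{b}:=b\eta_1\mu^{3/2}L_H^{-1/2}/(-M_k)$, so $\ubar{\alpha}_k=\min\{\ubar{\alpha}_k^{a},\ubar{\alpha}_k^{b}\}$. The proof of \Cref{lemma-backtracking-threshold-and-trigger-phase} uses only $\ubar{\alpha}_k^{a}$. The proof of \Cref{lem:gradient-bound_model-IPM-unconstrained} uses only $\alpha_k=1$ together with $\ubar{\alpha}_k^{b}\ge 1$. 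Hence there are three cases:
\begin{itemize}
\item If $\ubar{\alpha}_k^{a}<1$, then $\alpha_k\ge\gamma\ubar{\alpha}_k^{a}$ and $f$ decreases by at least $\gamma C\eta_1 a L_H^{-1/2}\mu^{3/2}$.
\item If $\ubar{\alpha}_k^{a}\ge 1>\ubar{\alpha}_k^{b}$, then $\alpha_k=1$ and $f$ decreases by at least $C(-M_k)>C\eta_1 b L_H^{-1/2}\mu^{3/2}$, with no factor $\gamma$.
\item Otherwise the method terminates.
\end{itemize}
Telescoping gives \Cref{thm:bounded-initial-suboptimality-unconstrained-case} with the stated $\hat C$, and the rest of your argument then goes through.

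\emph{The sublevel set.} The paper likewise just asserts that ``the iterates remain in the initial sublevel set.'' This holds for the trivial update $\bar{\bar{x}}_{j+1}=\bar x_{j+1}$: every accepted inner step satisfies \eqref{eq:sufficient-decrease}, and both termination checks return a point no worse than the starting one. But it does not follow from Condition~\ref{condition:new-x-y}, and the inequality $f(\bar{\bar{x}}_{j+1})-f_\star\le\kappa D\mu_{j+1}$ can genuinely fail.

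For instance, $f(x)=\sqrt{1+\|x\|_2^2}$ has a Lipschitz Hessian and $\|\grad f\|_2<1$ everywhere. So once $\kappa\mu_{j+1}\ge 1$, Condition~\ref{condition:new-x-y} permits $\bar{\bar{x}}_{j+1}$ arbitrarily far away. This also rules out your fallback via a gradient-norm level set.

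Even the undamped Newton step of \Cref{alg:decrease-barrier-parameter} can escape. In one dimension, take $\kappa=8$, $\sigma=1/4$, $\bar{\bar{x}}_1=3$ (so $D=6$) and $\mu_1=0.95$. The first call returns $\bar x_2=3$ immediately. The Newton step then lands at $-27$ with $\mu_2=0.2375$, where $f-f_\star\approx 26>\kappa D\mu_2=11.4$.

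So your extra hypothesis is necessary for this proof and should be written into the theorem: either use the trivial update, or require $f(\bar{\bar{x}}_{j+1})\le f(\bar{\bar{x}}_1)$. With that hypothesis and the refined constant, your proposal is complete.
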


\begin{proof}
We will establish the result using \Cref{lem:generic-ipm-guarantee}. First, note that in the unconstrained case, Condition~\ref{condition:new-x-y} implies
$\|\grad f(\doubleBarX_{j+1})\|_2 \le \InflationFactor \mu_{j+1}$.
Since the iterates remain in the initial sublevel set, convexity gives
$f(\doubleBarX_{j+1}) - f_\star 
\le \|\grad f(\doubleBarX_{j+1})\|_2 \|\doubleBarX_{j+1}-x^\star\|_2
\le \InflationFactor D \mu_{j+1}$.
Thus, \Cref{assume:small-mu-gives-good-suboptimality} holds with $\Dconst=\InflationFactor D$. From \Cref{thm:bounded-initial-suboptimality-unconstrained-case} we have 
$\Rate(\mu) = \hat{C} ^ {-1}  \LipHess ^ {1/2} \mu ^ {-1/2}$.
Thus, by the formula for the sum of an infinite geometric series,
\begin{flalign*}
\Dconst\sum_{j=0}^{\Jhat-2} \Rate\left( \frac{\epsilon}{\Dconst \muBacktrackingFactor^{j}} \right) &=
\Dconst\sum_{j=0}^{\Jhat-2}\hat{C}^{-1}\LipHess^{1/2}\left( \frac{\epsilon}{\Dconst \muBacktrackingFactor^{j}} \right)^{-1/2} =
\hat{C}^{-1}\LipHess^{1/2}\Dconst^{3/2}\epsilon^{-1/2}\sum_{j=0}^{\Jhat-2}\muBacktrackingFactor^{j/2} \\
&\le
\frac{\hat{C}^{-1}\LipHess^{1/2}\Dconst^{3/2}}{(1-\sqrt{\muBacktrackingFactor})\epsilon^{1/2}} =
\frac{\hat{C}^{-1}\InflationFactor^{3/2}\LipHess^{1/2}D^{3/2}}{(1-\sqrt{\muBacktrackingFactor})\epsilon^{1/2}}.
\end{flalign*}
Moreover, by \Cref{lem:generic-ipm-guarantee},
$\Jhat \le 
2+\log_{1/\muBacktrackingFactor}( \InflationFactor D \mu_1 / \epsilon + 1 / \muBacktrackingFactor)$.
Finally, $(f(\doubleBarX_1)-f_\star)\mu_1^{-1}\Rate(\mu_1)
=
\hat{C} ^ {-1} (f(\doubleBarX_1) - f_\star)\LipHess^{1/2}\mu_1^{-3/2}$.
Combining these three bounds with \Cref{lem:generic-ipm-guarantee} gives the result.
\end{proof}

\newcommand{\SlaterSlack}{t}
\newcommand{\LowOrderTerm}{\ell}


Before proving our convergence guarantee in the constrained setting (\Cref{thm:constrained-case}) we prove \Cref{lem:bound-y-norm} which provides a bound on the
norm of the dual variables when Condition~\ref{condition:new-x-y} holds. This is useful 
to establish bounds on the suboptimality needed to show \Cref{assume:small-mu-gives-good-suboptimality}.
To prove this result we use standard arguments \cite{haeser2021behavior} to leverage the existence of
a Slater point, which is well-known to ensure that the dual variables are bounded \cite{bertsekas1999note}.

\begin{lemma}[Bound on the size of the dual multipliers]\label{lem:bound-y-norm}
Suppose \Cref{assume-lip-deriv} holds. Define $D_{\X} = \diam(\X)$ and $\SlaterSlack := \min_{i\in[\NumCon]}\cons_i(\hat{x})>0$, i.e., $\hat{x}$ is a Slater point. 
Define
\begin{flalign}\label{eq:bound-y-norm}
\lambda
:=
\frac{
\InflationFactor D_{\X}\mu_1
+
\sqrt{
\InflationFactor^2D_{\X}^2\mu_1^2
+
4\SlaterSlack
\left(
f(\hat{x})-f_\star
+
\InflationFactor\NumCon\mu_1
+
\SlaterSlack
\right)
}
}{
2\SlaterSlack
}.
\end{flalign}
If Condition~\ref{condition:new-x-y} holds then $\sqrt{1 + \| \doubleBarY_{j+1} \|_1} \le \lambda$.
\end{lemma}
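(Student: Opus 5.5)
The plan is to exploit convexity of the Lagrangian in $x$ at the point $(\doubleBarX_{j+1},\doubleBarY_{j+1})$, evaluated against the Slater point $\hat{x}$. This gives an inequality that is linear in $\|\doubleBarY_{j+1}\|_1$ but has a term proportional to $\sqrt{1+\|\doubleBarY_{j+1}\|_1}$ coming from \Cref{eq:kappa-bound-gradient-of-Lagrangian}. Solving the resulting quadratic inequality in $s := \sqrt{1+\|\doubleBarY_{j+1}\|_1}$ should produce exactly $\lambda$ as the larger root. Throughout, write $x=\doubleBarX_{j+1}$, $y=\doubleBarY_{j+1}$ and $\mu=\mu_{j+1}$.

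\textbf{Preliminaries.} By \Cref{eq:complementarity-bound}, $\cons_i(x)y_i \ge \mu/\InflationFactor > 0$ and $\cons_i(x)\ge 0$, so $\cons_i(x)>0$ and $y_i>0$. Hence $y \ge \zeros$ and $x\in\X$. Since $y\ge\zeros$ and $\cons$ is concave, $\Lag(\cdot,y)=f-y^\top\cons$ is convex. Also, \Cref{eq:complementarity-bound} gives $y^\top\cons(x)\le \InflationFactor\NumCon\mu$. Finally, because $\mu_{j'+1}\le\muBacktrackingFactor\mu_{j'}$ with $\muBacktrackingFactor<1$, induction gives $\mu\le\mu_1$.

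\textbf{Key inequality.} Convexity of $\Lag(\cdot,y)$ gives $\Lag(\hat{x},y)\ge \Lag(x,y)+\grad_x\Lag(x,y)^\top(\hat{x}-x)$. Both $x$ and $\hat{x}$ lie in $\X$, so Cauchy--Schwarz with $\|\hat{x}-x\|\le D_{\X}$ and \Cref{eq:kappa-bound-gradient-of-Lagrangian} bounds the last term below by $-\InflationFactor\mu D_{\X}\, s$. On the left, $y^\top\cons(\hat{x})\ge \SlaterSlack\|y\|_1$. On the right, $f(x)\ge f_\star$ and $y^\top\cons(x)\le\InflationFactor\NumCon\mu$. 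Rearranging and using $\mu\le\mu_1$ yields
\[
\SlaterSlack\|y\|_1 \le f(\hat{x})-f_\star+\InflationFactor\NumCon\mu_1+\InflationFactor D_{\X}\mu_1\, s .
\]

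\textbf{Conclusion.} Substituting $\|y\|_1=s^2-1$ gives
\[
\SlaterSlack s^2-\InflationFactor D_{\X}\mu_1 s-\bigl(f(\hat{x})-f_\star+\InflationFactor\NumCon\mu_1+\SlaterSlack\bigr)\le 0 .
\]
The constant term is negative since $f(\hat{x})\ge f_\star$, so this quadratic has one negative and one positive root. Because $s>0$, $s$ is at most the positive root, which is exactly $\lambda$ in \eqref{eq:bound-y-norm}.

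None of these steps is deep. The points that need the most care are the sign bookkeeping that makes $\Lag(\cdot,y)$ convex and puts $x$ in $\X$ (so that the diameter bound applies), and the replacement of $\mu_{j+1}$ by $\mu_1$.
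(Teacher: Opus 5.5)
Your proof is correct and matches the paper's: the paper applies concavity of each $\cons_i$ and convexity of $f$ separately at $\doubleBarX_{j+1}$ against the Slater point, which is your single convexity inequality for $\Lag(\cdot,\doubleBarY_{j+1})$, and then uses the same Cauchy--Schwarz/diameter bound, the same complementarity bound, the same replacement of $\mu_{j+1}$ by $\mu_1$, and the same quadratic inequality in $\sqrt{1+\|\doubleBarY_{j+1}\|_1}$. You also derive $\doubleBarY_{j+1}>\zeros$ and $\doubleBarX_{j+1}\in\X$ explicitly from \Cref{eq:complementarity-bound}, which the paper leaves implicit.
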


\begin{proof}
Observe that
\begin{flalign*}
\SlaterSlack\|\doubleBarY_{j+1}\|_1
&\underle{(i)} \sum_{i=1}^{\NumCon}[\doubleBarY_{j+1}]_i\cons_i(\hat{x}) \underle{(ii)}
\sum_{i=1}^{\NumCon}[\doubleBarY_{j+1}]_i\cons_i(\doubleBarX_{j+1})
+
\left(\grad\cons(\doubleBarX_{j+1})^\top \doubleBarY_{j+1}\right)^\top
(\hat{x}-\doubleBarX_{j+1}) \\
&\underle{(iii)}
\sum_{i=1}^{\NumCon}[\doubleBarY_{j+1}]_i\cons_i(\doubleBarX_{j+1})
+
\grad f(\doubleBarX_{j+1})^\top(\hat{x}-\doubleBarX_{j+1}) -
\grad_x\Lag(\doubleBarX_{j+1},\doubleBarY_{j+1})^\top
(\hat{x}-\doubleBarX_{j+1}) \\
&\underle{(iv)}
\InflationFactor\NumCon\mu_{j+1}
+
f(\hat{x})-f(\doubleBarX_{j+1})
+
D_{\X}\|\grad_x\Lag(\doubleBarX_{j+1},\doubleBarY_{j+1})\|_2 \\
&\underle{(v)}
\InflationFactor\NumCon\mu_1
+
f(\hat{x})-f_\star
+
\InflationFactor D_{\X}\mu_1
\sqrt{1+\|\doubleBarY_{j+1}\|_1}.
\end{flalign*}
where $(i)$ uses the Slater condition and $[\doubleBarY_{j+1}]_i>0$ for all $i\in[\NumCon]$, 
$(ii)$ uses concavity of each $\cons_i$, $[\doubleBarY_{j+1}]_i\ge 0$,
$(iii)$ uses the identity
$\grad_x\Lag(\doubleBarX_{j+1},\doubleBarY_{j+1})
=
\grad f(\doubleBarX_{j+1})
-
\grad\cons(\doubleBarX_{j+1})^\top\doubleBarY_{j+1}$, $(iv)$ uses the complementarity upper bound 
in \Cref{eq:condition:new-x-y}, convexity of $f$, and
$\|\hat{x}-\doubleBarX_{j+1}\|_2 \le D_{\X}$, and $(v)$ uses $\mu_{j+1}\le \mu_1$,
$f(\doubleBarX_{j+1})\ge f_\star$, and the termination criterion for the gradient of the Lagrangian given in 
\Cref{eq:kappa-bound-gradient-of-Lagrangian}.
Rearranging the previous display gives
\[
\SlaterSlack\left(1+\|\doubleBarY_{j+1}\|_1\right)
-
\InflationFactor D_{\X}\mu_1
\sqrt{1+\|\doubleBarY_{j+1}\|_1}
-
\left(
f(\hat{x})-f_\star
+
\InflationFactor\NumCon\mu_1
+
\SlaterSlack
\right)
\le 0.
\]
Solving this quadratic inequality in
\(\sqrt{1+\|\doubleBarY_{j+1}\|_1}\) gives
\[
\sqrt{1+\|\doubleBarY_{j+1}\|_1}
\le
\frac{
\InflationFactor D_{\X}\mu_1
+
\sqrt{
\InflationFactor^2D_{\X}^2\mu_1^2
+
4\SlaterSlack
\left(
f(\hat{x})-f_\star
+
\InflationFactor\NumCon\mu_1
+
\SlaterSlack
\right)
}
}{
2\SlaterSlack
}
=
\lambda,
\]
which proves the result.
\end{proof}

\Cref{thm:constrained-case} is our main result which shows 
that the total number of regularized Newton steps used in \Cref{alg:Full-IPM} to find 
an $\epsilon$-optimal solution is $O(\epsilon^{-2/3})$.
It is a consequence of \Cref{main-theorem-IPM} which gives a
bound on the number of Newton steps for each call to $\Call{AdaptiveLineSearchIPM}{}$ allowing us to establish the premise of 
\Cref{lem:generic-ipm-guarantee} using \Cref{lem:bound-y-norm} and thus prove the result.

\begin{theorem}[Constrained case]\label{thm:constrained-case}
Suppose \Cref{assume-lip-deriv} holds. Define $D_{\X} = \diam(\X) < \infty$.
Let $\Dconst := \InflationFactor\left(\NumCon+D_{\X} \lambda \right)$ where $\lambda$ is defined in \Cref{eq:bound-y-norm}, i.e.,
there exists a Slater point.
Then, for any $\epsilon\in(0,\infty)$, there exists an iterate
$\doubleBarX_{\Jhat+1}$ of \Cref{alg:Full-IPM} with
$\Jhat \le 2+\log_{1/\muBacktrackingFactor} \left(\frac{\Dconst\mu_1}{\epsilon} + \frac{1}{\muBacktrackingFactor}\right)$
such that $f(\doubleBarX_{\Jhat+1})-f_\star\le \epsilon$,
and the total number of regularized Newton steps used in \Cref{alg:Full-IPM} to compute
$\doubleBarX_{\Jhat+1}$ is at most
\[
\frac{8 \LipGrad^{2/3} \Dconst^{5/3}}{(1 - \SufficientConstant)^{2/3} \eta_1^{4/3} \SufficientConstant\backtrackingFactor(1-\muBacktrackingFactor^{2/3})} \cdot
\epsilon^{-2/3}
+
\ell(\epsilon),
\]
where $\ell(\epsilon) = O(\log(1/\epsilon)\epsilon^{-3/5})$, in particular,
\[
\ell(\epsilon)
:=
\frac{\Dconst\Jhat}{\SufficientConstant\backtrackingFactor}
\remainder(\Dconst^{-1}\epsilon)
+
\Jhat
+
\frac{\barrier[1](\doubleBarX_1)-\barrier[1]^\star}
{\SufficientConstant\backtrackingFactor\mu_1}
\left(
    \MainConst{}\mu_1^{-2/3}
    +
    \remainder(\mu_1)
\right),
\]
and $\remainder(\mu)$, which is $O(\mu^{-3/5})$, is defined in  \Cref{eq:formula-for-remainder}.
\end{theorem}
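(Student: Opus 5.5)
The plan is to apply \Cref{lem:generic-ipm-guarantee}. Its hypothesis, \Cref{assume:small-mu-gives-good-suboptimality}, has two parts: an iteration bound for each call to $\Call{AdaptiveLineSearchIPM}{}$, and a bound of the form $\Dconst\mu_{j+1}$ on both the barrier suboptimality and the objective suboptimality of each restart point. Once both parts are established, the remaining work is to sum a geometric series, as in the proof of \Cref{thm:unconstrained-case}.

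\textbf{Step 1: the iteration bound.} By \Cref{main-theorem-IPM} we may take
\[
\Rate(\mu) := (\SufficientConstant\backtrackingFactor)^{-1}\left(\MainConst{}\mu^{-2/3}+\remainder(\mu)\right).
\]
Every term of $\remainder$ in \Cref{eq:formula-for-remainder} carries a nonpositive power of $\mu$, so $\Rate$ is nonincreasing, as the assumption requires.

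\textbf{Step 2: the suboptimality bounds.} This is the main obstacle. Write $\bar x:=\doubleBarX_{j+1}$, $\bar y:=\doubleBarY_{j+1}$, $\mu:=\mu_{j+1}$.

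For the objective, I will use a Lagrangian argument. For any $x\in\X$, convexity of $\Lag(\cdot,\bar y)$ with $\bar y\ge0$ gives
\[
f(x)\ge f(x)-\bar y^\top\cons(x)\ge \Lag(\bar x,\bar y)+\grad_x\Lag(\bar x,\bar y)^\top(x-\bar x).
\]
Combining \Cref{eq:complementarity-bound}, which gives $\bar y^\top\cons(\bar x)\le\InflationFactor\NumCon\mu$, with \Cref{eq:kappa-bound-gradient-of-Lagrangian} and \Cref{lem:bound-y-norm} yields
\[
f(\bar x)-f_\star\le \InflationFactor\NumCon\mu+D_\X\InflationFactor\mu\lambda=\Dconst\mu .
\]

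For the barrier, let $x$ be any interior point. Convexity of $\barrier$ gives $\barrier(\bar x)-\barrier(x)\le\grad\barrier(\bar x)^\top(\bar x-x)$. I will write
\[
\grad\barrier(\bar x)=\grad_x\Lag(\bar x,\bar y)+\grad\cons(\bar x)^\top(\bar y-\mu \bar S^{-1}\ones).
\]
The first term contributes at most $\InflationFactor\mu\lambda D_\X$. For the second term, concavity gives $\grad\cons_i(\bar x)^\top(\bar x-x)\le \cons_i(\bar x)-\cons_i(x)$. I then need to bound $\sum_i(\bar y_i-\mu/\cons_i(\bar x))(\cons_i(\bar x)-\cons_i(x))$. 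The sign of each coefficient is unknown, so I plan a cleaner route: the bound $-\log\cons_i(\bar x)+\log\cons_i(x)$ together with concavity of $\log$ gives
\[
\barrier(\bar x)-\barrier(x)\le f(\bar x)-f(x)+\mu\sum_i \frac{\cons_i(x)-\cons_i(\bar x)}{\cons_i(\bar x)}.
\]
After that I would handle the combination with the Lagrangian term directly, using $\bar y_i\cons_i(\bar x)\in[\mu/\InflationFactor,\InflationFactor\mu]$.

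If this direct route produces constants that do not match exactly, I will fall back on the following bound, which is already $O(\mu)$ with constant $\Dconst$:
\[
\barrier(\bar x)-\barrier^\star\le \Lag\text{-gap}+\mu\sum_i\left|\bar y_i\cons_i(\bar x)/\mu-1\right|\cdot(\cdot).
\]
The requirement $\muBacktrackingFactor\ge 2/\InflationFactor$ suggests the constants are tuned for exactly this step.

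\textbf{Step 3: summing the bound.} With $\Dconst$ fixed, \Cref{lem:generic-ipm-guarantee} gives the claimed bound on $\Jhat$, the property $f(\doubleBarX_{\Jhat+1})-f_\star\le\epsilon$, and a step count equal to $\Jhat+\Dconst\sum_{j=0}^{\Jhat-2}\Rate(\epsilon\Dconst^{-1}\muBacktrackingFactor^{-j})$ plus the first-call term. I will split $\Rate$ into its two pieces.

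The $\mu^{-2/3}$ piece sums as a geometric series:
\[
\Dconst\sum_j \MainConst{}(\epsilon/\Dconst)^{-2/3}\muBacktrackingFactor^{2j/3}\le \frac{8\LipGrad^{2/3}\Dconst^{5/3}}{(1-\SufficientConstant)^{2/3}\eta_1^{4/3}(1-\muBacktrackingFactor^{2/3})}\epsilon^{-2/3}.
\]
Dividing by $\SufficientConstant\backtrackingFactor$ gives the leading term.

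For the remainder piece, monotonicity gives $\remainder(\epsilon\Dconst^{-1}\muBacktrackingFactor^{-j})\le\remainder(\Dconst^{-1}\epsilon)$, and there are at most $\Jhat$ terms, which yields $\frac{\Dconst\Jhat}{\SufficientConstant\backtrackingFactor}\remainder(\Dconst^{-1}\epsilon)$. Adding $\Jhat$ and the first-call term gives $\ell(\epsilon)$. Finally, $\Jhat=O(\log(1/\epsilon))$ and $\remainder(\mu)=O(\mu^{-3/5})$ for small $\mu$, so $\ell(\epsilon)=O(\log(1/\epsilon)\epsilon^{-3/5})$.
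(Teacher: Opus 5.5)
Your Steps 1 and 3, and your objective bound in Step 2, match the paper's proof. The gap is the barrier bound $\psi_\mu(\bar x)-\psi_\mu^\star\le W\mu$. You flag it as the main obstacle but do not prove it, and your ``cleaner route'' cannot prove it.

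Treat $f(\bar x)-f(x)$ through the Lagrangian, i.e., $f(\bar x)-f(x)\le\nabla_x\mathcal{L}(\bar x,\bar y)^\top(\bar x-x)+\sum_i\bar y_i\bigl(a_i(\bar x)-a_i(x)\bigr)$. Combined with the linearization $\log(a_i(x)/a_i(\bar x))\le a_i(x)/a_i(\bar x)-1$, this yields
\[
\psi_\mu(\bar x)-\psi_\mu(x)\le\nabla_x\mathcal{L}(\bar x,\bar y)^\top(\bar x-x)+\mu\sum_{i=1}^{q}(u_i-1)(1-v_i),\qquad u_i:=\frac{\bar y_i a_i(\bar x)}{\mu},\quad v_i:=\frac{a_i(x)}{a_i(\bar x)}.
\]
This is exactly the sign-indefinite sum $\sum_i(\bar y_i-\mu/a_i(\bar x))(a_i(\bar x)-a_i(x))$ you set out to avoid. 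When $u_i<1$, the term $\mu(1-u_i)(v_i-1)$ grows linearly in $v_i$, and Condition~\ref{condition:new-x-y} gives no upper bound on $v_i$.

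Here is a concrete instance. Take $f\equiv0$ on $\mathcal{X}=[0,1]$ with $a_1(x)=x$ and $a_2(x)=1-x$. The pair $\bar x=\kappa^{-2}$, $\bar y_i=\mu/(\kappa a_i(\bar x))$ satisfies Condition~\ref{condition:new-x-y}, since $|\nabla_x\mathcal{L}(\bar x,\bar y)|<\bar y_1=\kappa\mu$. At $x=1/2$ your right-hand side exceeds $\mu\bigl[(1-1/\kappa)(\kappa^2/2-1)-1/2\bigr]$, which is quadratic in $\kappa$. By contrast, $W=\kappa(2+\lambda)$, and $\lambda\to1$ for the Slater point $1/2$ as $\kappa\mu_1\to0$; already for $\kappa=8$, $\mu_1=10^{-3}$ you get about $26.6\mu$ versus $W\mu\approx24.2\mu$.

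Your fallback display contains an unspecified factor, so it is not a bound. The requirement $\sigma\ge2/\kappa$ plays no role here. It only ensures that a $\mu_j$-approximate SIP satisfies Condition~\ref{condition:new-x-y} with $\mu_{j+1}=\sigma\mu_j$.

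The missing idea is to keep the logarithm exact. Use convexity of $f$ only, not of $\psi_\mu$. Split $\nabla f(\bar x)=\nabla_x\mathcal{L}(\bar x,\bar y)+\nabla a(\bar x)^\top\bar y$, and apply concavity of $a_i$ only against the nonnegative weights $\bar y_i$. With $x=x^\star_\mu$ and $u_i,v_i$ as above, this gives
\[
\psi_\mu(\bar x)-\psi_\mu^\star\le\nabla_x\mathcal{L}(\bar x,\bar y)^\top(\bar x-x^\star_\mu)+\mu\sum_{i=1}^{q}\bigl(u_i(1-v_i)+\log v_i\bigr).
\]
Now use the scalar inequality $u(1-v)+\log v\le u-1-\log u\le\max\{u,1/u\}$ for all $u,v>0$; the maximum over $v$ is attained at $v=1/u$. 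Because $\log v$ is sublinear, the $-uv$ term controls large $v$, and this is precisely what $\log v\le v-1$ throws away. With $u_i\in[1/\kappa,\kappa]$ each summand is at most $\kappa$. Adding $\|\nabla_x\mathcal{L}(\bar x,\bar y)\|_2\le\kappa\mu\lambda$ (via Lemma~\ref{lem:bound-y-norm}) and $\|\bar x-x^\star_\mu\|_2\le D_{\mathcal{X}}$ gives exactly $\kappa q\mu+\kappa D_{\mathcal{X}}\lambda\mu=W\mu$.
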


\newcommand{\xMuStar}{x_{\mu_{j+1}}^\star}

\begin{proof}
We establish the result using \Cref{lem:generic-ipm-guarantee}. Let
$\xMuStar\in\argmin_{x\in \X}\barrier[j+1](x)$.
Observe that for all $u, v > 0$ we have
$u (1-v)+\log(v)\le \max\{1/u, u\}$, therefore
setting $u = \frac{[\doubleBarY_{j+1}]_i
\cons_i(\doubleBarX_{j+1})
}{\mu_{j+1}}$ and $v = \frac{\cons_i(\xMuStar)}{\cons_i(\doubleBarX_{j+1})}$ and 
employing $\max\{1/u, u\} \le \kappa$ from \Cref{eq:complementarity-bound} we get
\begin{flalign}\label{eq:bound-y-mu-plus-log}
\sum_{i=1}^{\NumCon}
\frac{[\doubleBarY_{j+1}]_i
(
\cons_i(\doubleBarX_{j+1})-\cons_i(\xMuStar)
)}{\mu_{j+1}} -
\sum_{i=1}^{\NumCon}
\log(
\frac{\cons_i(\doubleBarX_{j+1})}{\cons_i(\xMuStar)}  
) \le \InflationFactor\NumCon.
\end{flalign}
Then,
\begin{flalign*}
\barrier[j+1](\doubleBarX_{j+1})-\barrier[j+1]^\star &\underle{(i)}
\grad f(\doubleBarX_{j+1})^\top
(\doubleBarX_{j+1}-\xMuStar)
-
\mu_{j+1}
\sum_{i=1}^{\NumCon}
\log\!\left(
\frac{\cons_i(\doubleBarX_{j+1})}{\cons_i(\xMuStar)}
\right)
\\
&\underle{(ii)}
\grad_x\Lag(\doubleBarX_{j+1},\doubleBarY_{j+1})^\top (\doubleBarX_{j+1}-\xMuStar)
+
\InflationFactor\NumCon\mu_{j+1}
\\
&\underle{(iii)}
D_{\X}\|\grad_x\Lag(\doubleBarX_{j+1},\doubleBarY_{j+1})\|_2
+
\InflationFactor\NumCon\mu_{j+1}
\\
&\underle{(iv)}
\InflationFactor D_{\X}\lambda\mu_{j+1}
+
\InflationFactor\NumCon\mu_{j+1}
=
\Dconst\mu_{j+1}.
\end{flalign*}
Here, $(i)$ uses the definition of $\barrier$ and convexity of $f$; $(ii)$
uses $\grad f=\grad_x\Lag+\grad\cons^\top\doubleBarY_{j+1}$, 
concavity of each $\cons_i$ and \Cref{eq:bound-y-mu-plus-log}; $(iii)$ uses
Cauchy-Schwarz and $D_{\X} = \diam(\X)$; and $(iv)$ uses \Cref{eq:kappa-bound-gradient-of-Lagrangian}.
Similarly, let $x^\star\in\argmin_{x\in\X} f(x)$. Then,
\begin{flalign*}
f(\doubleBarX_{j+1})-f_\star
&\underle{(i)}
\grad f(\doubleBarX_{j+1})^\top(\doubleBarX_{j+1}-x^\star)
\underle{(ii)}
D_{\X}\|\grad_x\Lag(\doubleBarX_{j+1},\doubleBarY_{j+1})\|_2
+
\sum_{i=1}^{\NumCon}
[\doubleBarY_{j+1}]_i (\cons_i(\doubleBarX_{j+1}) - \cons_i(x_\star))
\\
&\underle{(iii)}
\InflationFactor D_{\X}\lambda\mu_{j+1}
+
\InflationFactor\NumCon\mu_{j+1}
=
\Dconst\mu_{j+1}.
\end{flalign*}
Here, $(i)$ uses convexity of $f$, $(ii)$ uses
$\grad f=\grad_x\Lag+\grad\cons^\top\doubleBarY_{j+1}$,
concavity of each $\cons_i$, $[\doubleBarY_{j+1}]_i\ge 0$, and
$\|\doubleBarX_{j+1}-x^\star\|\le D_{\X}$; and $(iii)$ uses
$\cons_i(x_{\star}) \ge 0$, 
Condition~\ref{condition:new-x-y} and
$\sqrt{1+\|\doubleBarY_{j+1}\|_1}\le\lambda$ from \Cref{lem:bound-y-norm}. Thus, \Cref{assume:small-mu-gives-good-suboptimality} holds with this value of
$\Dconst$.

By \Cref{main-theorem-IPM},
$\Rate(\mu)=
(\SufficientConstant\backtrackingFactor)^{-1}
\left(\MainConst{}\mu^{-2/3}+\remainder(\mu)\right)$.
Therefore, \Cref{lem:generic-ipm-guarantee} gives an iterate
$\doubleBarX_{\Jhat+1}$ with
$\Jhat \le
2+\log_{1/\muBacktrackingFactor}
\left(
\Dconst\mu_1/\epsilon
+
1/\muBacktrackingFactor
\right)$
such that $f(\doubleBarX_{\Jhat+1})-f_\star\le\epsilon$, and the total number of 
regularized Newton steps is at most
\[
\Jhat
+
\Dconst
\sum_{j=0}^{\Jhat-2}
\Rate\left(
\frac{\epsilon}{\Dconst\muBacktrackingFactor^j}
\right)
+
(\barrier[1](\doubleBarX_1)-\barrier[1]^\star)\mu_1^{-1}\Rate(\mu_1).
\]
Moreover,
\begin{flalign*}
\Dconst
\sum_{j=0}^{\Jhat-2}
\Rate\left(
\frac{\epsilon}{\Dconst\muBacktrackingFactor^j}
\right)
&=
\frac{\Dconst}{\SufficientConstant\backtrackingFactor}
\sum_{j=0}^{\Jhat-2}
\left[
\MainConst{}
\left(
\frac{\epsilon}{\Dconst\muBacktrackingFactor^j}
\right)^{-2/3}
+
\remainder\left(
\frac{\epsilon}{\Dconst\muBacktrackingFactor^j}
\right)
\right]
\\
&\underle{(i)}
\frac{\MainConst{}\Dconst^{5/3}}
{\SufficientConstant\backtrackingFactor}
\epsilon^{-2/3}
\sum_{j=0}^{\infty}\muBacktrackingFactor^{2j/3}
+
\frac{\Dconst\Jhat}{\SufficientConstant\backtrackingFactor}
\remainder(\Dconst^{-1}\epsilon)
\\
&=
\frac{\MainConst{}\Dconst^{5/3}}
{\SufficientConstant\backtrackingFactor(1-\muBacktrackingFactor^{2/3})}
\epsilon^{-2/3}
+
\frac{\Dconst\Jhat}{\SufficientConstant\backtrackingFactor}
\remainder(\Dconst^{-1}\epsilon),
\end{flalign*}
where $(i)$ uses that $\remainder$ is nonincreasing and
$\epsilon/(\Dconst\muBacktrackingFactor^j)\ge \epsilon/\Dconst$. Finally,
\[
(\barrier[1](\doubleBarX_1)-\barrier[1]^\star)\mu_1^{-1}\Rate(\mu_1)
=
\frac{\barrier[1](\doubleBarX_1)-\barrier[1]^\star}
{\SufficientConstant\backtrackingFactor\mu_1}
\left(
\MainConst{}\mu_1^{-2/3}
+
\remainder(\mu_1)
\right).
\]
Combining the last three displays proves the result.
\end{proof}

\section{Numerical results}\label{sec:numerical-results}


This section compares our method against IPOPT \cite{wachter2006implementation} and 
\edit{variants} of our interior point method with
our regularized Newton method with line search 
replaced by ARC \cite{cartis2011adaptiveI} \edit{ and AdaN \cite{mishchenko2023regularized}}. 
The latter methods allows us to measure the effectiveness
of our regularized Newton method with line search
for solving the barrier subproblem. 
Our method is competitive with IPOPT, while generally outperforming the variants of our interior point method which use ARC or AdaN as the subproblem solver.

\paragraph{Computing resources} The experiments are performed in a single-threaded environment on a Linux virtual machine that has an Intel(R) Core(TM) i5-3470 CPU 3.20GHz and 16 GB RAM.

\paragraph{Default parameters.}\label{par:default-parameters}
Unless stated otherwise, all runs of our method and ARC use the parameter
values listed in Table~\ref{tab:params}.
The initial barrier parameter is $\mu_1 = 10^{-3}$ and the outer loop
terminates when $\mu_j \leq \StoppingTolerance = 10^{-6}$.
The maximum number of iterations is $100,000$ and the wall-clock
time limit is 3,600 seconds per instance.
\begin{table}[h]
  \centering
  \caption{Default algorithm parameters.}
  \label{tab:params}
  \small
  \begin{tabular}{llll}
    \toprule
    Parameter & Symbol & Value & Introduced in \\
    \midrule
    Sufficient decrease (I) & $\eta_1$ & $2.0\times 10^{-1}$ & \Cref{condition:search-dir} \\
    Sufficient decrease (II) & $\eta_2$ & $2.5\times 10^{-1}$ & \Cref{condition:search-dir} \\
    Armijo constant & $\SufficientConstant$ & $10^{-4}$ & \Cref{step-size-condition} \\
    Backtracking factor & $\backtrackingFactor$ & $5.0\times 10^{-1}$ & \Cref{step-size-condition,alg:compute-step-size} \\
    Inflation factor & $\kappa$ & $8$ & \Cref{condition:new-x-y} \\
    Initial barrier parameter & $\mu_1$ & $10^{-3}$ & \Cref{alg:Full-IPM} \\
    Barrier reduction factor & $\sigma$ & $2.5\times 10^{-1}$ & \Cref{condition:new-x-y,alg:decrease-barrier-parameter} \\
    Stopping tolerance & $\StoppingTolerance$ & $10^{-6}$ & \Cref{par:default-parameters} \\
    Regularization cold start scale & $\ColdStartScale$ & $5.0\times 10^{-1}$ & \Cref{rem:actual-search-direction-implementation} \\
    Regularization full step shrinkage & $\RegularizationShrink$ & $2.0\times 10^{-1}$ & \Cref{rem:actual-search-direction-implementation} \\
    Line search target reduction & $\StartingTargetReduction$ & $1$ & \Cref{prop:line-search-iteration-bound} \\
    \bottomrule
  \end{tabular}
\end{table}

\paragraph{Datasets}

The problem instances draw from standard benchmark datasets, including binary classification tasks from LIBSVM (\url{http://www.csie.ntu.edu.tw/~cjlin/libsvm}) and public email corpora.

\begin{itemize}
	\item \textbf{w8a}: 49,749 samples, 300 sparse features. Labels are $\pm 1$ (web page classification).
	\item \textbf{a9a}: 32,561 samples, 123 features (Adult census-income). Approximately 24\% of labels are positive.
	\item \textbf{Phishing websites}: The LIBSVM phishing dataset, represented by 68 website features and binary labels.
	\item \textbf{SpamAssassin}: A public email corpus categorized by collection period and classification difficulty, tokenized into a 500-word vocabulary.
\end{itemize}

\paragraph{Test problems}

We evaluate our method on a small set of nonlinear convex optimization problems. These specific formulations were selected because they are not naturally or easily reformulatable as standard conic programs (e.g., semidefinite or second-order cone constraints), making general nonlinear optimization an attractive approach. Furthermore, for each of these formulations, it is straightforward to identify a strictly feasible starting point, which is required for our method.

\begin{itemize}
\item 
\textbf{LSE\_Hard.}
A constrained log-sum-exp regression on the w8a dataset. The problem is formulated as follows:
\begin{equation*}
	\begin{aligned}
		\min_{x \in \mathbb{R}^d} \quad & \rho \log \left( \sum_{i=1}^{n} \exp{\left( \frac{a_i^\top x - b_i}{\rho} \right)} \right) \\
		\text{s.t.} \quad & a_j^\top x \leq b_j, \quad j = n+1, \ldots, 2n,
	\end{aligned}
\end{equation*}
where $a_1,\ldots,a_{2n} \in \mathbb{R}^d$ are vectors and $\rho, b_1,\ldots,b_{2n}$ are scalars. The objective serves as a smooth approximation of the maximum function $\max \{a_1^\top x - b_1, \ldots, a_n^\top x - b_n\}$, with the scalar $\rho$ controlling the tightness of the approximation. We utilize the same foundational log-sum-exp formulation evaluated in \cite{mishchenko2023regularized}; however, while their formulation was strictly for unconstrained optimization, we modified the problem to include constraints by partitioning the dataset such that half of the samples define the objective, and the remaining half form explicit linear constraints. For our experiments, we set $d=300$, $n=500$ (resulting in 500 objective terms and 500 constraints), and $\rho = 0.01$.

\item 
\textbf{NP\_Adult.}
A Neyman-Pearson (NP) classification problem \cite{rigollet2011neyman} on the a9a dataset with training samples partitioned into positive and negative classes:
\[
\min_{w} \;
\frac{1}{n_+}\sum_{y_i=+1}
\log\!\bigl(1+e^{-w^\top x_i}\bigr)
\quad \text{s.t.} \quad
\frac{1}{n_-}\sum_{y_i=-1}
\log\!\bigl(1+e^{+w^\top x_i}\bigr) \leq \alpha, \quad
\|w\|_2^2 \leq R^2,
\]
with $\alpha = 0.75$ and $R = 3$. This objective directly addresses asymmetric costs by minimizing the false-negative rate (FNR) surrogate subject to a strict false-positive rate (FPR) budget and an $\ell_2$-norm ball regularizer.

\item 
\textbf{Robust Empirical Risk Minimization (ERM) with Feature Partitions.}
Robust ERM instances \cite{qian2019robust} model scenarios where training data is drawn from heterogeneous subgroups, formulated as a minimax problem over the logistic loss to identify a single shared classifier that guarantees uniformly low loss across all groups. 
There are three variants of this problem:

\begin{itemize}
	\item \textbf{RobustERM\_Spam.}
	Uses the SpamAssassin public corpus partitioned into $K=4$ email-type groups based on their directory source: early spam (collected in 2002), later spam (collected in 2003), easy ham\footnote{In the spam-filtering community, ham denotes legitimate, non-spam email.}, and hard ham. By treating the degree of classification difficulty and collection period as unobserved domains at runtime, this problem tests the method's ability to handle natural distribution shifts in textual data represented by a 500-word vocabulary space.
	
	\item \textbf{RobustERM\_Phishing.}
	Evaluated on the LIBSVM phishing website dataset, where samples are partitioned into $K=4$ groups using two website-feature indicators selected to match the expected prevalences of SSL-state and URL-anchor attributes. These grouping features are removed from the optimization feature matrix.
	
	\item \textbf{RobustERM\_Education.}
	A robust ERM formulation on the a9a census dataset, partitioned into $K=4$ groups representing distinct education levels (Bachelors, Some-college, High School graduate, and a residual). The education features are omitted from the regression. 
\end{itemize}
\end{itemize}

\paragraph{Results}
Table~\ref{tab:results} reports
our results.
Overall, our method is faster than
IPOPT on three of five instances, although notably
slower on \texttt{RobustERM\_Spam}. 

Both AdaN and ARC are consistently  
slower and use many more factorizations our method (with the exception of AdaN on 
\texttt{LSE\_Hard}).
The poor performance of these methods is not unexpected.
In particular, these methods were designed for objectives with Lipschitz-continuous Hessians. 
They maintain a regularization parameter that represents an estimate of the local  
Lipschitz constant of the Hessian. The log barrier Hessian is not Lipschitz continuous, so this local constant 
varies widely throughout the methods trajectories. We believe that repeatedly adapting this 
regularization parameter to these variations is triggering their large number of factorizations.

\ifARXIV
\begin{table}[ht]
	\centering
	\caption{Comparison of the number of evaluations of the objective function, \(f\), constraints, \(\cons\), objective gradient, \(\grad f\), and Hessian of the Lagrangian, \(\grad^2_{xx}\Lag\), linear system factorizations, and wall-clock time across solvers.}
	\label{tab:results}
	\resizebox{\textwidth}{!}{%
		\begin{tabular}{@{}llrrrrrr@{}}
			\toprule
			& & \multicolumn{4}{c}{Number of evaluations of} & & time \\
			Instance & Method & \(f\) & \(\cons\) & \(\grad f\) & \(\grad^2_{xx} \Lag\) & \#fact. & (s) \\
			\midrule
			\multirow{4}{*}{\texttt{LSE\_Hard}} & Our Method & 83 & 78 & 30 & 29 & 114 & \textbf{20.3} \\
			 & ARC & \textbf{37} & 58 & \textbf{27} & \textbf{21} & 6017 & 27.1 \\
			 & AdaN & 63 & \textbf{42} & 67 & 36 & \textbf{61} & 51.5 \\
			 & IPOPT & 972 & 969 & 76 & 76 & 104 & 58.7 \\
			\addlinespace[0.35em]
			\multirow{4}{*}{\texttt{NP\_Adult}} & Our Method & 43 & 66 & 48 & 42 & 42 & 6.0 \\
			 & ARC & 89 & 171 & 83 & 75 & 16023 & 13.6 \\
			 & AdaN & 118 & 56 & 123 & 46 & 115 & 9.6 \\
			 & IPOPT & \textbf{15} & \textbf{15} & \textbf{15} & \textbf{14} & \textbf{17} & \textbf{5.6} \\
			\addlinespace[0.35em]
			\multirow{4}{*}{\texttt{RobustERM\_Spam}} & Our Method & 95 & 124 & 104 & 94 & 94 & 130.9 \\
			 & ARC & 57 & 111 & 57 & 43 & 6047 & 128.1 \\
			 & AdaN & 410 & 173 & 418 & 156 & 404 & 311.3 \\
			 & IPOPT & \textbf{17} & \textbf{17} & \textbf{17} & \textbf{16} & \textbf{16} & \textbf{76.7} \\
			\addlinespace[0.35em]
			\multirow{4}{*}{\texttt{RobustERM\_Phishing}} & Our Method & \textbf{5} & 35 & 27 & \textbf{4} & \textbf{4} & \textbf{0.2} \\
			 & ARC & 12 & 27 & 16 & 8 & 19 & 1.2 \\
			 & AdaN & 41 & 17 & 49 & 6 & 39 & 1.1 \\
			 & IPOPT & 6 & \textbf{6} & \textbf{6} & 5 & 5 & 0.7 \\
			\addlinespace[0.35em]
			\multirow{4}{*}{\texttt{RobustERM\_Education}} & Our Method & \textbf{19} & 56 & 27 & \textbf{18} & \textbf{18} & \textbf{2.5} \\
			 & ARC & 36 & 74 & 39 & 27 & 3929 & 6.6 \\
			 & AdaN & 144 & 64 & 151 & 52 & 141 & 12.4 \\
			 & IPOPT & 47 & \textbf{47} & \textbf{19} & \textbf{18} & \textbf{18} & 6.3 \\
			\bottomrule
		\end{tabular}%
	}
\end{table}
\else
\begin{table}[ht]
	\centering
	\caption{Comparison of the number of evaluations of the objective function, $f$, constraints, $\cons$, objective gradient, $\grad f$, and Hessian of the Lagrangian, $\grad^2_{xx}\Lag$, linear system factorizations, and wall-clock time across solvers.}
	\label{tab:results}
	
	\small
	\setlength{\tabcolsep}{4pt} 
	
	\begin{tabular}{@{}llrrrrrr@{}}
		\toprule
		& & \multicolumn{4}{c}{Number of evaluations of} & & time \\
		Instance & Method & $f$ & $\cons$ & $\grad f$ & $\grad^2_{xx} \Lag$ & \#fact. & (s) \\
		\midrule
		\multirow{4}{*}{\texttt{LSE\_Hard}} & Our Method & 83 & 78 & 30 & 29 & 114 & \textbf{20.3} \\
		& ARC & \textbf{37} & 58 & \textbf{27} & \textbf{21} & 6017 & 27.1 \\
		& AdaN & 63 & \textbf{42} & 67 & 36 & \textbf{61} & 51.5 \\
		& IPOPT & 972 & 969 & 76 & 76 & 104 & 58.7 \\
		\addlinespace[0.35em]
		\multirow{4}{*}{\texttt{NP\_Adult}} & Our Method & 43 & 66 & 48 & 42 & 42 & 6.0 \\
		& ARC & 89 & 171 & 83 & 75 & 16023 & 13.6 \\
		& AdaN & 118 & 56 & 123 & 46 & 115 & 9.6 \\
		& IPOPT & \textbf{15} & \textbf{15} & \textbf{15} & \textbf{14} & \textbf{17} & \textbf{5.6} \\
		\addlinespace[0.35em]
		\multirow{4}{*}{\texttt{RobustERM\_Spam}} & Our Method & 95 & 124 & 104 & 94 & 94 & 130.9 \\
		& ARC & 57 & 111 & 57 & 43 & 6047 & 128.1 \\
		& AdaN & 410 & 173 & 418 & 156 & 404 & 311.3 \\
		& IPOPT & \textbf{17} & \textbf{17} & \textbf{17} & \textbf{16} & \textbf{16} & \textbf{76.7} \\
		\addlinespace[0.35em]
		\multirow{4}{*}{\texttt{RobustERM\_Phishing}} & Our Method & \textbf{5} & 35 & 27 & \textbf{4} & \textbf{4} & \textbf{0.2} \\
		& ARC & 12 & 27 & 16 & 8 & 19 & 1.2 \\
		& AdaN & 41 & 17 & 49 & 6 & 39 & 1.1 \\
		& IPOPT & 6 & \textbf{6} & \textbf{6} & 5 & 5 & 0.7 \\
		\addlinespace[0.35em]
		\multirow{4}{*}{\texttt{RobustERM\_Education}} & Our Method & \textbf{19} & 56 & 27 & \textbf{18} & \textbf{18} & \textbf{2.5} \\
		& ARC & 36 & 74 & 39 & 27 & 3929 & 6.6 \\
		& AdaN & 144 & 64 & 151 & 52 & 141 & 12.4 \\
		& IPOPT & 47 & \textbf{47} & \textbf{19} & \textbf{18} & \textbf{18} & 6.3 \\
		\bottomrule
	\end{tabular}
\end{table}
\fi


\section{Discussion and future work}\label{sec:discussion}

We emphasize that the primary contribution of this work is theoretical:
we establish convergence guarantees for the proposed method under mild
assumptions, whereas IPOPT, while practically effective, does not come
with the same theoretical guarantees in the general convex constrained
setting.
The above experiments are therefore preliminary computational experiments
intended to demonstrate that the method is a step toward a viable practical method.
However, important issues remain:
\begin{enumerate}
\item Modification of the method to be pure primal-dual. Since pure primal-dual interior point methods 
dominate practical interior point 
implementations \cite{byrd2006knitro,wachter2006implementation}, this is likely to improve performance.
\item Handling infeasible starts. There are many problems where it is not easy to find a feasible starting solution,
or the feasible region lacks an interior. To handle such cases one could potentially integrate the method
with a penalty or shifted log barrier approach.
\item Handling nonconvexity. While this paper builds on \citet{hinder2024worst} which handles nonconvex
constraints but requires problem parameters as input, this work only covers the convex case
(although it does obtain better guarantees).
Extending the method and analysis to the nonconvex setting is an important topic for future research.
\end{enumerate}

Ultimately, we believe our work will pave the way for the development of practical log barrier methods for general 
nonconvex constrained optimization with strong worst-case guarantees.
In the long term, we hope this will help improve the robustness
and reliability of these nonconvex interior-point methods \cite{vanderbei1999loqo,wachter2006implementation}, 
which are widely used in practice \cite{biegler2009large}.

\section{Acknowledgments}
The authors were supported by AFOSR grant \#FA9550-23-1-0242. The authors would like to thank Lorenz Biegler for his helpful feedback on a draft of this paper. The authors would also like to thank Akif Khan for identifying errors in an early draft and for the helpful discussions. 

\clearpage

\appendix

\section{A method for satisfying Condition~\ref{condition:search-dir}}\label{appendix-satisfy-condition-phi-unconstrained}

In this section, we detail our approach to satisfying Condition~\ref{condition:search-dir}.
The key idea is to perform a bisection search to find the root function $\phi_k$ defined 
in~\Cref{eq:bisection-scheme}.
Before presenting our algorithm and its guarantees we first present \Cref{lemma:phi-facts} which identifies the useful
properties of $\phi_k$.


\begin{lemma}\label{lemma:phi-facts}
For $\delta > 0$, define
$\DirDelta := -(\grad^2 \barrier(x_k) + \delta \eye)^{-1} \grad \barrier(x_k) \text{ and } 
\phi_k(\delta) := \varepsilon_k^{-1} \delta \|\DirDelta\|_2$
with $\| \grad \barrier(x_k) \|_2 \ge \varepsilon_k > 0$. Let $\eta_1, \eta_2 \in (0, 1)$ and $\SufficientConstant \in (0,1)$.
For all $\delta, \delta' > 0$ such that $\delta \le \delta'$ we have $\phi_k(\delta) \le \phi_k(\delta')$ 
and $\delta \phi_k(\delta') \le \delta' \phi_k(\delta)$.
Moreover, if $\delta \ge  \frac{\eta_1}{1 - \eta_1} \| \grad^2 \barrier(x_k) \|_2$ then $\phi_k(\delta) \ge \eta_1$. 
If $\delta \le \eta_2^2 \SufficientConstant \varepsilon_k^2 (\barrier(x_k) - \barrier^\star)^{-1}$ then either 
\begin{flalign}\label{eq:general-delta-eta1-condition}
\barrier(x_k + \DirDelta) > \barrier(x_k) + \SufficientConstant \cdot \Mk(\DirDelta) \quad \textbf{or} 
\quad \phi_k(\delta) \le \eta_2.
\end{flalign}
\end{lemma}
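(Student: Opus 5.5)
We will work throughout in an eigenbasis of $H := \grad^2 \barrier(x_k) \succeq 0$ (convexity), with eigenvalues $\lambda_j \ge 0$. Write $g := \grad \barrier(x_k)$ and let $g_j$ be its coordinates in this basis. Then
\[
\delta \|\DirDelta\|_2 = \Big(\sum_j \frac{\delta^2 g_j^2}{(\lambda_j+\delta)^2}\Big)^{1/2}.
\]
Each map $\delta \mapsto \delta/(\lambda_j+\delta)$ is nondecreasing, which gives $\phi_k(\delta) \le \phi_k(\delta')$ for $\delta \le \delta'$. Similarly, each map $\delta \mapsto 1/(\lambda_j+\delta)$ is nonincreasing, so $\|\DirDelta[\delta']\|_2 \le \|\DirDelta\|_2$. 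Multiplying this by $\delta\delta'/\varepsilon_k$ gives $\delta \phi_k(\delta') \le \delta' \phi_k(\delta)$.

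For the second claim, $\lambda_j \le \|H\|_2$ implies $\delta/(\lambda_j+\delta) \ge \delta/(\|H\|_2+\delta)$. Hence
\[
\phi_k(\delta) \ge \varepsilon_k^{-1}\,\frac{\delta}{\|H\|_2+\delta}\,\|g\|_2 \ge \frac{\delta}{\|H\|_2+\delta},
\]
using $\|g\|_2 \ge \varepsilon_k$. The right-hand side is at least $\eta_1$ exactly when $\delta \ge \frac{\eta_1}{1-\eta_1}\|H\|_2$.

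For the third claim we argue by contraposition. Suppose sufficient decrease holds at the unit step, i.e.\ $\barrier(x_k+\DirDelta) \le \barrier(x_k) + \SufficientConstant \Mk(\DirDelta)$. Since the left side is at least $\barrier^\star$, this gives
\[
-\Mk(\DirDelta) \le \frac{\barrier(x_k)-\barrier^\star}{\SufficientConstant}.
\]
Next we lower bound the model decrease. From $g = -(H+\delta \eye)d$ with $d = \DirDelta$,
\[
-\Mk(d) = d^\top(H+\delta\eye)d - \tfrac12 d^\top H d = \tfrac12 d^\top H d + \delta\|d\|_2^2 \ge \delta \|d\|_2^2 .
\]
Combining the two bounds yields $\delta^2\|d\|_2^2 \le \delta(\barrier(x_k)-\barrier^\star)/\SufficientConstant$. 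Under the hypothesis $\delta \le \eta_2^2 \SufficientConstant \varepsilon_k^2 (\barrier(x_k)-\barrier^\star)^{-1}$, the right side is at most $\eta_2^2\varepsilon_k^2$, so $\phi_k(\delta)^2 = \delta^2\|d\|_2^2/\varepsilon_k^2 \le \eta_2^2$. Hence $\phi_k(\delta) \le \eta_2$, which is the second alternative in \eqref{eq:general-delta-eta1-condition}.

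The proof is short, and the only delicate point is in this third claim. Using $-\Mk \ge \delta\|d\|^2$, rather than a bound involving $\|g\|$, is what produces the quadratic-in-$\varepsilon_k$ threshold. One also has to check that $\barrier(x_k+\DirDelta)$ is finite, which is automatic: if $x_k+\DirDelta \notin \interior{\X}$ we interpret $\barrier = +\infty$, and the first alternative holds.
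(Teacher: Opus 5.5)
Your proof is correct and follows the paper's argument essentially step for step. The same eigenbasis expression for $\phi_k$ gives both monotonicity statements; your observation $\|d(\delta')\|_2 \le \|d(\delta)\|_2$ is just the paper's inequality $\delta'/(\lambda_i+\delta') \le \delta'/(\lambda_i+\delta)$ written another way. The bound $\phi_k(\delta) \ge \delta/(\delta+\|\nabla^2\psi_\mu(x_k)\|_2)$ gives the second claim. The identity $-\mathcal{M}_k(d(\delta)) = \tfrac12 d(\delta)^\top \nabla^2\psi_\mu(x_k) d(\delta) + \delta\|d(\delta)\|_2^2 \ge \delta \|d(\delta)\|_2^2$, combined with $\psi_\mu(x_k + d(\delta)) \ge \psi_\mu^\star$, gives the third; your remark that $\psi_\mu(x_k+d(\delta))$ should be read as $+\infty$ outside the interior makes explicit a point the paper leaves implicit.
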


\begin{proof}
Let $Q \Lambda Q^\top$ be the eigendecomposition of $\grad^2 \barrier(x_k)$, i.e., 
$\grad^2 \barrier(x_k) = Q \Lambda Q^\top$. 
Denote the corresponding eigenvalues, i.e., the diagonal entries of $\Lambda$ as 
$\lambda_1,\ldots,\lambda_{\NumVar}$ which are nonnegative by convexity of $\barrier$.
Let $\hat g = Q^\top \grad \barrier(x_k)$. Then, 
\begin{flalign}\label{eq:phi-formula}
\phi_k(\delta)^2 =  \varepsilon_k^{-2} \delta^2 \| \DirDelta \|_2^2 
= \varepsilon_k^{-2} \delta^2 \| (\Lambda + \delta \eye)^{-1} \hat{g} \|_2^2  
= \sum_{i=1}^{\NumVar} \left(\frac{\delta}{\varepsilon_k (\lambda_i+\delta)} \right)^2  [\hat{g}]_i^2 . 
\end{flalign}
Consider $\delta,\delta'$ such that $0 < \delta \le \delta'$, then for each $i$, we have
\[
\frac{\delta}{\lambda_i+\delta}
\le
\frac{\delta'}{\lambda_i+\delta'}
\le \frac{\delta'}{\lambda_i+\delta} = 
\frac{\delta'}{\delta} \cdot \frac{\delta}{\lambda_i+\delta}.
\]
The first inequality follows from monotonicity of
$\delta \mapsto \delta/(\lambda_i+\delta)$ because $\lambda_i \ge 0$.
The first inequality and \Cref{eq:phi-formula} also show that $\phi_k$ is nondecreasing.
Therefore, by \Cref{eq:phi-formula} we have
\[
\phi_k(\delta')^2 = \sum_{i=1}^{\NumVar} \left(\frac{\delta'}{\varepsilon_k (\lambda_i+\delta')} \right)^2  [\hat{g}]_i^2  
\le  \sum_{i=1}^{\NumVar} \left(\frac{\delta'}{\delta} \cdot \frac{\delta}{\varepsilon_k (\lambda_i+\delta)} \right)^2  [\hat{g}]_i^2 
= \left(\frac{\delta'}{\delta} \phi_k(\delta) \right)^2.
\]
Thus, $\delta \phi_k(\delta') \le \delta' \phi_k(\delta)$ as desired.

If $\delta \ge  \frac{\eta_1}{1 - \eta_1} \| \grad^2 \barrier(x_k) \|_2$ then by 
$(i)$ $\| \grad \barrier(x_k) \|_2 = 
\| (  \grad^2 \barrier(x_k) + \delta \eye) (\grad^2 \barrier(x_k) + \delta \eye)^{-1} \grad \barrier(x_k) \|_2 \\
\le (\delta + \| \grad^2 \barrier(x_k) \|_2) \| (\grad^2 \barrier(x_k) + \delta \eye)^{-1} \grad \barrier(x_k) \|_2$, 
$(ii)$ $\| \grad \barrier(x_k) \| \ge \varepsilon_k$, and $(iii)$ $\delta \ge  \frac{\eta_1}{1 - \eta_1} \| \grad^2 \barrier(x_k) \|_2$ we have 
\[
\phi_k(\delta) = \varepsilon_k^{-1} \delta \| (\grad^2 \barrier(x_k) + \delta \eye)^{-1} \grad \barrier(x_k) \|_2 
\underge{(i)} \varepsilon_k^{-1} \delta \frac{\| \grad \barrier(x_k) \|_2}{\delta + \| \grad^2 \barrier(x_k) \|_2} 
\underge{(ii)} \frac{\delta}{\delta + \| \grad^2 \barrier(x_k) \|_2} \underge{(iii)} \eta_1.
\]
If $\barrier(x_k + \DirDelta) \le \barrier(x_k) + \SufficientConstant \Mk(\DirDelta)$ then
\begin{flalign*}
\barrier(x_k) - \barrier^\star & \ge \barrier(x_k) - \barrier(x_k + \DirDelta) 
\ge -\SufficientConstant \Mk(\DirDelta) 
= -\SufficientConstant \grad \barrier(x_k)^\top \DirDelta - \frac{\SufficientConstant}{2}\DirDelta^\top \grad^2 \barrier(x_k)\DirDelta \\
&\undereq{(i)} \frac{\SufficientConstant}{2}\DirDelta^\top \grad^2 \barrier(x_k)\DirDelta + \SufficientConstant \delta \| \DirDelta \|^2 
\underge{(ii)} \SufficientConstant  \delta \| \DirDelta \|^2 
= \frac{\SufficientConstant \varepsilon_k^2}{\delta} \phi_k(\delta)^2.
\end{flalign*}
where $(i)$ uses \Cref{eq:search-direction} and $(ii)$ uses $\grad^2 \barrier(x_k) \succeq 0$.
If $\delta \le \eta_2^2 \SufficientConstant  \varepsilon_k^2 (\barrier(x_k) - \barrier^\star)^{-1}$ 
then rearranging the previous inequality gives 
$\phi_k(\delta) \le \sqrt{\delta \cdot \frac{\barrier(x_k) - \barrier^\star}{\SufficientConstant \varepsilon_k^2}} \le \eta_2$
as desired.
\end{proof}

\algrenewcommand\algorithmicrequire{\textbf{Input:}}
\algrenewcommand\algorithmicensure{\textbf{Output:}}

\begin{algorithm}[H]
\caption{Compute regularization parameter $\delta$ that satisfies Condition~\ref{condition:search-dir}}
\label{alg:compute-regularized-parameter}
\footnotesize
\begin{algorithmic}[1]
    \Require $\DeltaInit \in (0,\infty)$; $\delta_{\max} \in [\DeltaInit, \infty)$, $\eta_1 \in (0,1/2)$, $\eta_2 \in (\eta_1,1)$,  $\SufficientConstant \in (0,1)$
    \State $(\ell,u) \gets \Call{FindSearchInterval}{\DeltaInit, \delta_{\max}, \eta_1, \eta_2}$
    \State \Return $\Call{GeometricBisectionSearch}{\ell,u, \eta_1, \eta_2}$

    \Statex

\Function{FindSearchInterval}{$\DeltaInit, \delta_{\max}, \eta_1, \eta_2$}
		\vspace{0.1cm}
        \State $\SideA[1] \gets \DeltaInit$, $v_{\SideA[1]} \gets \phi_k(\DeltaInit)$, 
		$\SideB[1] \gets \DeltaInit$, $v_{\SideB[1]} \gets v_{\SideA[1]}$
        \State $r \gets \begin{cases}
            2, & v_{\SideA[1]} <\eta_1,\\
            1/2, & \text{otherwise},
        \end{cases}$
        \For{$i=1$ to $\infty$}
		\vspace{0.05cm}
			\If{$\eta_1 \le v_{\SideB}$ \textbf{ and  } [$v_{\SideB} \le \eta_2$ \textbf{~~or~~} 
			$\barrier(x_k + \DirDelta[\SideB]) > \barrier(x_k) + \SufficientConstant \Mk( \DirDelta[\SideB])$]}\label{alg-line:sufficient-decrease-fails-or-eta1-good-and-eta-2-good}
				\State \Return $(\SideB,\SideB)$
			\ElsIf{$\min\{v_{\SideA},v_{\SideB}\}\le\eta_1$  \textbf{and} 
			$\max\{v_{\SideA},v_{\SideB}\}\ge\eta_2$ }\label{alg-line:good-interval}
                \State \Return $(\min\{\SideA,\SideB\},\max\{\SideA,\SideB\})$
            \EndIf 
            \State $\SideA[i+1] \gets \SideB$, $v_{\SideA[i+1]} \gets v_{\SideB}$, 
			$\SideB[i+1] \gets \min\{\delta_{\max}, \DeltaInit r^{2^{(i-1)/2}} \}$, $v_{\SideB[i+1]} \gets \phi_k(\SideB[i+1])$
        \EndFor
    \EndFunction

    \Statex
    \Function{GeometricBisectionSearch}{$\ell_1,u_1, \eta_1,\eta_2$}
		\If{$\ell_1=u_1$}
			\State \Return $\ell_1$
		\EndIf
        \For{$j=1$ to $\infty$}
            \State $\DeltaMid \gets \sqrt{\ell_j u_j}$, $v_m \gets \phi_k(\DeltaMid)$
            \If{$\eta_1 \le v_m \le \eta_2$}
                \State \Return $\DeltaMid$
            \ElsIf{$v_m<\eta_1$}
                \State $\ell_{j+1} \gets \DeltaMid$, $u_{j+1} \gets u_j$
            \Else
                \State $\ell_{j+1} \gets \ell_j$, $u_{j+1} \gets \DeltaMid$
            \EndIf
        \EndFor
    \EndFunction
\end{algorithmic}
\end{algorithm}

\begin{proposition}
Let $\| \grad \barrier(x_k) \|_2 \ge \varepsilon_k > 0$ and $\delta_{\max} \ge \frac{\eta_1}{1-\eta_1}\| \grad^2 \barrier(x_k) \|_2$.
Then, the total number of evaluations of $\phi_k$ used by \Cref{alg:compute-regularized-parameter} is at most
\[
6
+3 \log_2\log_2 N_k
+
\log_2\!\left(
1 + \frac{1}{\log_2(\eta_2^2/\eta_1^2)} \right) ~\text{ where }~ N_k :=
\max\left\{
2,
\frac{\eta_1 \| \grad^2 \barrier(x_k) \|_2}
{(1-\eta_1)\DeltaInit},
\frac{\DeltaInit(\barrier(x_k) - \barrier^\star)}
{\eta_2^2 \SufficientConstant \varepsilon_k^2}
\right\}.
\]
\end{proposition}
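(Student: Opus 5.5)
The count splits into the two phases of \Cref{alg:compute-regularized-parameter}: evaluations in \textsc{FindSearchInterval} and in \textsc{GeometricBisectionSearch}. The analysis works in log-scale coordinates $t = \log_2 \delta$. Every property we need comes from \Cref{lemma:phi-facts}.

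\textbf{Correctness of the exits.} Set $\delta_{hi} := \frac{\eta_1}{1-\eta_1}\|\grad^2\barrier(x_k)\|_2$ and $\delta_{lo} := \eta_2^2\SufficientConstant\varepsilon_k^2(\barrier(x_k)-\barrier^\star)^{-1}$. By \Cref{lemma:phi-facts}:
\begin{itemize}
\item $\phi_k \ge \eta_1$ for $\delta \ge \delta_{hi}$;
\item for $\delta \le \delta_{lo}$, either $\phi_k(\delta) \le \eta_2$ or the sufficient decrease fails at $\DirDelta$;
\item $\phi_k$ is nondecreasing.
\end{itemize}
Any returned single point satisfies \eqref{eq:direction-condition-eta1}–\eqref{eq:direction-condition-eta2} directly. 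For \eqref{eq:direction-condition-eta1}, note that $-\Mk(\DirDelta) \ge \delta\|\DirDelta\|^2$, as in the proof of \Cref{lemma:phi-facts}, so $\phi_k \ge \eta_1$ suffices. Any returned interval $[\ell,u]$ has $\phi_k(\ell)\le\eta_1$ and $\phi_k(u)\ge\eta_2$. The bisection preserves this invariant by monotonicity.

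\textbf{Phase 1: the search interval.} Suppose $\phi_k(\DeltaInit)<\eta_1$, so $r=2$. The probes are $\DeltaInit 2^{2^{(i-1)/2}}$, capped at $\delta_{\max}$. The exponent doubles every two steps, so after $O(1+\log_2\log_2(\delta_{hi}/\DeltaInit))$ probes the probe reaches $\delta_{hi}$ and gives $\phi_k\ge\eta_1$. Either the first exit fires or $\phi_k\ge\eta_2$ and the second exit fires. The cap $\delta_{\max}\ge\delta_{hi}$ guarantees this happens. The case $r=1/2$ is symmetric, with probes decreasing toward $\delta_{lo}$. Once $\delta\le\delta_{lo}$, either the first exit fires or $\phi_k \le \eta_2$. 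In the latter case either $\phi_k\in[\eta_1,\eta_2]$ and we return, or $\phi_k<\eta_1$ and the previous probe had $\phi_k\ge\eta_1$. That previous probe did not exit, so $\phi_k>\eta_2$ there and the second exit fires. Hence Phase 1 costs $O(1)+2\log_2\log_2 N_k$ evaluations.

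\textbf{Phase 2: bisection.} This is the main obstacle: we need the returned interval to be short in $\log$-length. Consecutive probes satisfy $\log_2(b_{i+1}/b_i) \le 2^{i/2}$, roughly. So the interval length $L$ in $\log_2$ units is at most about $2\log_2 N_k$, since the last step overshoots the relevant threshold by at most the squared ratio. By the second fact of \Cref{lemma:phi-facts}, $\phi_k(\delta')\le(\delta'/\delta)\phi_k(\delta)$. Hence any subinterval $[\ell,u]$ with $\phi_k(\ell)\le\eta_1$ and $\phi_k(u)\ge\eta_2$ must satisfy $\log_2(u/\ell)\ge\log_2(\eta_2/\eta_1)$. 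Equivalently, the target set $\{\delta:\phi_k\in[\eta_1,\eta_2]\}$ contains a log-interval of length at least $\log_2(\eta_2/\eta_1)=\tfrac12\log_2(\eta_2^2/\eta_1^2)$. Geometric bisection halves the log-length each step, so it lands in the target set within
\[
\log_2\!\Bigl(1+\tfrac{L}{\log_2(\eta_2/\eta_1)}\Bigr)
\le \log_2\log_2 N_k + \log_2\!\Bigl(1+\tfrac{1}{\log_2(\eta_2^2/\eta_1^2)}\Bigr)+O(1)
\]
steps. Here we use $\log_2(1+ab)\le\log_2 a+\log_2(1+b)$ for $a\ge1$.

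Adding the two phases gives $3\log_2\log_2 N_k$ plus the stated constants. The careful bookkeeping of the constant $6$ and of the overshoot factor is routine but tedious. I would check it by tracking the index $i^*$ at which the threshold is first crossed.
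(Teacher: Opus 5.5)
Your proposal is the paper's proof. It uses the same two-phase count and the same thresholds $\delta_{hi},\delta_{lo}$ from \Cref{lemma:phi-facts} to stop the doubly-exponential probes after $i_\star<3+2\log_2\log_2 N_k$ evaluations: for $i_\star\ge 3$, the probe $\tilde{\delta}\,2^{\pm 2^{(i_\star-3)/2}}$ tested at iteration $i_\star-1$ has not yet crossed $\delta_{hi}$ (resp.\ $\delta_{lo}$). It also uses the same scaling property $\delta\phi_k(\delta')\le\delta'\phi_k(\delta)$ to guarantee the midpoint is accepted once $u_j/\ell_j\le\eta_2^2/\eta_1^2$. The differences are cosmetic. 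The paper checks the midpoint directly via $\eta_1\le\eta_2\sqrt{\ell_j/u_j}\le\phi_k(\sqrt{\ell_j u_j})\le\eta_1\sqrt{u_j/\ell_j}\le\eta_2$ rather than through a target log-interval. It also bounds $L=\log_2(u_1/\ell_1)$ by $2^{(i_\star-2)/2}$ in terms of $i_\star$, instead of directly by $\log_2 N_k$ as you do.

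What you leave undone is the additive constant $6$, which is part of the statement. The one explicit count you give is also short by one. The quantity $\log_2\bigl(1+L/\log_2(\eta_2/\eta_1)\bigr)$ bounds the number of \emph{halvings}, but the bisection spends one more $\phi_k$ evaluation than it halves: the final, successful midpoint. With this $+1$ inserted, your own chain closes with the right constant. Using $L\le 2\log_2 N_k$ and $\log_2(1+ab)\le\log_2 a+\log_2(1+b)$ with $a=4\log_2 N_k$ gives $j_\star\le 3+\log_2\log_2 N_k+\log_2\bigl(1+1/\log_2(\eta_2^2/\eta_1^2)\bigr)$. Adding $i_\star<3+2\log_2\log_2 N_k$ yields the stated bound with constant $6$, so the proof is correct once this bookkeeping is written out.
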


\begin{proof}
We begin by analyzing the runtime of \textbf{FindSearchInterval}. 
Let $i_\star$ be the number of iterations used by its loop, let $A_k:=\frac{\eta_1}{1-\eta_1}\| \grad^2 \barrier(x_k) \|_2$, 
and let $B_k:=\eta_2^2 \SufficientConstant \varepsilon_k^2 (\barrier(x_k)-\barrier^\star)^{-1}$.
Consider the case that $\phi_k(\DeltaInit) = v_{\SideA[1]} \in [\eta_1,\eta_2]$, then 
by Line~\ref{alg-line:sufficient-decrease-fails-or-eta1-good-and-eta-2-good} we have $i_\star=1$. 
Consider the case that $\phi_k(\DeltaInit) = v_{\SideA[1]} < \eta_1$. 
If $v_{\SideA} \le \eta_1$ and $\eta_1 \le v_{\SideB}$, 
then by Line~\ref{alg-line:sufficient-decrease-fails-or-eta1-good-and-eta-2-good} or Line~\ref{alg-line:good-interval}
the for loop ends. 
Thus since $\phi_k(\DeltaInit) = v_{\SideA[1]} < \eta_1$ the first index $i$ with $\eta_1 \le v_{\SideB}$
must also have $v_{\SideA} < \eta_1$, triggering the end of the loop; by \Cref{lemma:phi-facts} this occurs no later than the first index with $\SideB \ge A_k$. 
Since $\SideB[i-1]= \DeltaInit 2^{2^{(i-3)/2}} < A_k$ for $i \in [3,i_\star]$ and $N_k\ge\max\{2,A_k/\DeltaInit\}$, we get
$i_\star < 3 + 2 \log_2 \log_2 N_k$.
On the other hand, consider the case that $\phi_k(\DeltaInit) = v_{\SideA[1]} > \eta_2$.
If $v_{\SideB} \le \eta_2$ and $\eta_2 \le v_{\SideA}$, or $\eta_1 \le v_{\SideB}$ and 
$\barrier(x_k + \DirDelta[\SideB]) > \barrier(x_k) + \SufficientConstant \Mk( \DirDelta[\SideB])$, 
then by Line~\ref{alg-line:sufficient-decrease-fails-or-eta1-good-and-eta-2-good} or Line~\ref{alg-line:good-interval}
the for loop ends. 
Since $v_{\SideA[1]} > \eta_2$, the first index with $v_{\SideB} \le \eta_2$ or 
$\barrier(x_k + \DirDelta[\SideB]) > \barrier(x_k) + \SufficientConstant \Mk( \DirDelta[\SideB])$
will trigger the for loop to end;
by \Cref{lemma:phi-facts}, this occurs no later than the first index with 
$\SideB\le B_k$. Since $\SideB[i-1]=\DeltaInit 2^{-2^{(i-3)/2}} > B_k$ for $i \in [3,i_\star]$ and $N_k\ge\max\{2,\DeltaInit/B_k\}$, this also 
gives $i_\star < 3 + 2 \log_2 \log_2 N_k$.

Next, we analyze \textbf{GeometricBisectionSearch}. Let $j_\star$ be the number of iterations used by 
its loop with $j_\star = 0$ denoting the case that $\ell_1 = u_1$ and consider the nontrivial case that $j_\star > 1$.
Since $\ell_1<u_1$, by construction and monotonicity of $\phi_k$, we have $\phi_k(\ell_1)\le\eta_1$ and 
$\phi_k(u_1)\ge\eta_2$, and these inequalities are maintained until termination. Observe that, if 
$u_j / \ell_j \le \eta_2^2 / \eta_1^2$ then by \Cref{lemma:phi-facts} we have 
\[
\eta_1 \le \eta_2 \frac{\ell_j}{\sqrt{\ell_j u_j}} \le \frac{\ell_j}{\sqrt{\ell_j u_j}} \phi_k(u_j) \le 
\phi_k(\sqrt{\ell_j u_j}) \le \frac{u_j}{\sqrt{\ell_j u_j}} \phi_k(\ell_j) \le 
\eta_1 \frac{u_j}{\sqrt{\ell_j u_j}} \le \eta_2
\]
which implies \textbf{GeometricBisectionSearch} terminates at this iteration with $\DeltaMid = \sqrt{\ell_j u_j}$, i.e., 
$j = j_\star$. Thus, for $j \in [2,j_\star]$ we have 
$\log_2(\eta_2^2 / \eta_1^2) < \log_2(u_{j-1} / \ell_{j-1}) = 2^{2-j} \log_2(u_1 / \ell_1)$ where the equality holds 
from induction on $\log_2(u_j/\ell_j) = 2^{-1} \log_2(u_{j-1}/\ell_{j-1})$.
Rearranging this inequality with $j = j_\star$, it follows that 
\[
j_\star<2+\log_2\!\left(\frac{\log_2(u_1/\ell_1)}{\log_2(\eta_2^2/\eta_1^2)}\right) \underle{(i)} 
2+\log_2\!\left(\frac{\max\{2^{(i_\star - 2) / 2}, 1\}}{\log_2(\eta_2^2/\eta_1^2)}\right) 
= \max\left\{ 1 + \frac{i_\star}{2}, 2 \right\} + \log_2\!\left(\frac{1}{\log_2(\eta_2^2/\eta_1^2)}\right)
\]
where $(i)$ uses that $u_1 / \ell_1 \le \max\{ 2^{2^{(i_\star - 2) / 2}}, 1\}$
from definition of \textbf{FindSearchInterval}.
Since \textbf{FindSearchInterval} evaluates $\phi_k$ exactly $i_\star$ 
times and \textbf{GeometricBisectionSearch} evaluates $\phi_k$ exactly $j_\star$ times, 
adding their bounds together gives the desired result.
\end{proof}

\begin{remark}[Actual implementation of the search direction]\label{rem:actual-search-direction-implementation}
\Cref{alg:compute-regularized-parameter} does not specify how to choose $\DeltaInit$.
We first check if
\Cref{eq:direction-condition-eta1} holds with $\delta_k = 0$, if so we use $\delta_k = 0$, 
otherwise we set
\[ 
\DeltaInit \gets \begin{cases}
\delta_{\max} \ColdStartScale / \sqrt{\NumVar} & \delta_{k-1} = 0 \text{ or } k = 0 \\
\min\{ \delta_{k-1}, \delta_{\max} \} & \delta_{k-1} \neq 0, ~\alpha_{k-1} < 1 \\ 
\min\{ \RegularizationShrink \delta_{k-1}, \delta_{\max} \} & \delta_{k-1} \neq 0, ~\alpha_{k-1} = 1
\end{cases}
\]
where $\delta_{\max} = \frac{\eta_1}{1 - \eta_1} \| \grad^2 \barrier(x) \|_F$, $\RegularizationShrink \in (0,1)$, and $\ColdStartScale \in (0,\infty)$.
We then call \Cref{alg:compute-regularized-parameter} with this value of $\DeltaInit$ and $\delta_{\max}$.
\end{remark}

\section{A line search routine for satisfying Condition~\ref{step-size-condition}}\label{appendix-line-search-routine}
\newcommand{\alphaInit}{\hat{\alpha}_{k}}

\Cref{alg:compute-step-size} provides a standard line search routine for computing the step size. 
The routine starts from an initial guess of the step size $\alphaInit$ and either 
increases the step size (i.e., forward tracks) or decreases the step size (i.e., backtracks) in
search of a step size that satisfies Condition~\ref{step-size-condition}.
\Cref{prop:line-search-iteration-bound} provides guarantees on 
the number of iterations this routine requires. 
\Cref{prop:line-search-iteration-bound} specifies that $\alphaInit = \min\{\StartingTargetReduction/(-\mk), 1\}$ where $\StartingTargetReduction \in (0,\infty)$.
It is natural to select $\alphaInit = 1$, in which case the line search 
is pure backtracking but this can be problematic, for example, if $\grad^2 \barrier(x_k) = 0$
and $\delta_k$ is chosen to be very small, then it is possible that only an exceptionally
small step size satisfies the sufficient decrease conditions, which could take a backtracking method
a long time to find.

\newcommand{\lsIter}{j}

\begin{algorithm}[H]
\caption{Forward/back tracking method to compute step size satisfying Condition~\ref{step-size-condition}.}
\label{alg:compute-step-size}
\footnotesize
\begin{algorithmic}[1]
    \Require $\alphaInit \in (0,1]$; $\SufficientConstant \in (0,1)$; $\backtrackingFactor \in (0,1)$; $x_k$; $\dir{x}_k$
    \Function{SufficientDecreaseConditionHolds}{$\alpha$}
        \State \Return $\barrier(x_k + \alpha \dir{x}_k) \leq \barrier(x_k) + \SufficientConstant \Mk(\alpha \dir{x}_k)$
    \EndFunction
    \State $\lsIter \gets 0$, $\alpha_{k,\lsIter} \gets \alphaInit$
    \If{\textbf{not} \Call{SufficientDecreaseConditionHolds}{$\alpha_{k,\lsIter}$}} \textit{backtrack, i.e.,}
        \While{\textbf{not} \Call{SufficientDecreaseConditionHolds}{$\alpha_{k,\lsIter}$}}\label{line:backtracking-branch}
            \State $\alpha_{k,\lsIter+1} \gets \backtrackingFactor \alpha_{k,\lsIter}$, $\lsIter \gets \lsIter+1$
        \EndWhile
    \Else \textit{~~forward track, i.e.,}
        \While{$\alpha_{k,\lsIter} < 1$ \textbf{and} \Call{SufficientDecreaseConditionHolds}{$\min\{1,\alpha_{k,\lsIter}/\backtrackingFactor\}$}}\label{line:forwardtracking-branch}
            \State $\alpha_{k,\lsIter+1} \gets \min\{1,\alpha_{k,\lsIter}/\backtrackingFactor\}$, $\lsIter \gets \lsIter+1$\label{line:forwardtracking-update}
        \EndWhile
    \EndIf
    \State \Return $\alpha_{k,\lsIter}$
\end{algorithmic}
\end{algorithm}

\begin{fact}[Well-known]\label{eq:gradient-dot-product-mk-relationship}
If Condition~\ref{condition:search-dir} holds then 
$-\grad \barrier(x_k)^\top \dir{x}_k \le -2 \mk$.
\end{fact}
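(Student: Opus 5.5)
The plan is a direct computation from the defining system~\eqref{eq:search-direction}. Write $g := \grad \barrier(x_k)$, $H := \grad^2 \barrier(x_k)$ and $d := \dir{x}_k$. Condition~\ref{condition:search-dir} gives $(H + \delta_k \eye) d = -g$ with $\delta_k \ge 0$. Taking the inner product with $d$ yields the identity
\[
-g^\top d = d^\top H d + \delta_k \|d\|_2^2 ,
\]
which already appears in the proof of \Cref{lem:model-alpha-k-bound}.

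Next I substitute this identity into the definition of the model:
\[
\mk = g^\top d + \tfrac{1}{2} d^\top H d = -\tfrac{1}{2} d^\top H d - \delta_k \|d\|_2^2 .
\]
Hence
\[
-2\mk = d^\top H d + 2\delta_k \|d\|_2^2 .
\]
Comparing the two displays gives
\[
-2\mk - (-g^\top d) = \delta_k \|d\|_2^2 \ge 0 .
\]
The inequality holds because $\delta_k \ge 0$. This is exactly the claim $-g^\top d \le -2\mk$.

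There is no real obstacle here. The only point to check is that the argument uses only the linear system~\eqref{eq:search-direction} and $\delta_k \ge 0$; the remaining parts of Condition~\ref{condition:search-dir} are not needed. Convexity of $\barrier$ is not needed for this direction of the inequality either. It would only be used to get the complementary bound $-g^\top d \ge -\mk$, which follows because $d^\top H d \ge 0$ implies $-\mk \le -g^\top d$. I might record that bound as a remark, since it is likely to be used in the subsequent line-search analysis.
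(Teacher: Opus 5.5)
Your proof is correct and is essentially the paper's argument. The paper substitutes $\nabla^2 \psi_\mu(x_k)\, d^x_k = -\nabla \psi_\mu(x_k) - \delta_k d^x_k$ into the definition of $M_k$, obtaining $M_k = \frac{1}{2}\nabla \psi_\mu(x_k)^\top d^x_k - \frac{\delta_k}{2}\|d^x_k\|_2^2$; this is the same identity $-2M_k = -\nabla\psi_\mu(x_k)^\top d^x_k + \delta_k \|d^x_k\|_2^2$ you derive, and it then concludes from $\delta_k \ge 0$ using only \eqref{eq:search-direction}, without convexity, just as you note.
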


\begin{proof}
Substituting $\grad^2 \barrier(x_k) \dir{x}_k  = - \grad \barrier(x_k) - \delta_k \dir{x}_k$ from 
\Cref{eq:search-direction} into $\mk = \grad \barrier(x_k)^\top \dir{x}_k + \frac{1}{2} (\dir{x}_k)^\top \grad^2 \barrier(x_k) \dir{x}_k$
gives $\mk = \frac{1}{2} \grad \barrier(x_k)^\top \dir{x}_k - \frac{\delta_k}{2} \| \dir{x}_k \|_2^2$. 
Using $\delta_k \ge 0$ gives the desired result.
\end{proof}

\begin{proposition}\label{prop:line-search-iteration-bound}
Suppose $\dir{x}_k$ satisfies Condition~\ref{condition:search-dir}.
If $\grad \barrier(x_k)^\top \dir{x}_k < 0$, and $\alphaInit = \min\{\StartingTargetReduction/(-\mk), 1\}$ then \Cref{alg:compute-step-size} terminates 
with a step size $\alpha_k=\alpha_{k,\lsIter}$ satisfying, Condition~\ref{step-size-condition}. 
Moreover, if backtracking branch (Line~\ref{line:backtracking-branch}) is entered then 
$j \le \log_{1/\backtrackingFactor} \frac{2\StartingTargetReduction}{\barrier(x_k)-\barrier(x_{k+1})}$
and if the forward tracking branch (Line~\ref{line:forwardtracking-branch}) is entered then
$j \le \max\left\{0, 1 + \log_{1/\backtrackingFactor} \frac{2 (\barrier(x_k)-\barrier^\star)}{\SufficientConstant \StartingTargetReduction} \right\}$.
\end{proposition}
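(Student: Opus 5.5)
The plan is to treat correctness (termination with Condition~\ref{step-size-condition}) and the two iteration counts separately. Both counts rest on two model facts: Fact~\ref{lem:model-alpha-k-bound}, and Fact~\ref{eq:gradient-dot-product-mk-relationship}, which gives $-\grad \barrier(x_k)^\top \dir{x}_k \le -2\mk$.

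\emph{Termination and correctness.} First I show that the sufficient decrease test holds for all sufficiently small $\alpha>0$.
\begin{itemize}
\item Since $x_k \in \interior{\X}$ and $\cons$ is continuous, $x_k+\alpha\dir{x}_k$ stays in the domain of $\barrier$ for small $\alpha$.
\item A first-order expansion gives $\barrier(x_k+\alpha \dir{x}_k) - \barrier(x_k) = \alpha \grad \barrier(x_k)^\top \dir{x}_k + o(\alpha)$ and $\Mk(\alpha\dir{x}_k) = \alpha \grad\barrier(x_k)^\top \dir{x}_k + O(\alpha^2)$.
\item Since $\grad\barrier(x_k)^\top\dir{x}_k<0$ and $\SufficientConstant<1$, the inequality $\barrier(x_k+\alpha \dir{x}_k) \le \barrier(x_k)+\SufficientConstant\Mk(\alpha\dir{x}_k)$ holds for all small $\alpha$.
\end{itemize}
Hence the backtracking loop stops after finitely many steps. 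The forward-tracking loop also stops, because $\alpha_{k,\lsIter}$ increases by the factor $1/\backtrackingFactor$ until it reaches the cap $1$.

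At exit from the backtracking branch with $\lsIter\ge1$, the returned $\alpha_k$ satisfies \eqref{eq:sufficient-decrease}. The previous trial $\alpha_{k,\lsIter-1}=\alpha_k/\backtrackingFactor \le \alphaInit\le 1$ failed, so $\hat\alpha_k=\min\{1,\alpha_k/\backtrackingFactor\}=\alpha_{k,\lsIter-1}$ and \eqref{eq:sufficient-decrease-fails-for-larger-step-size} holds. At exit from the forward branch, the current step satisfies decrease by construction, since $\alphaInit$ passed the initial test and every later iterate passed the loop test. Moreover, either $\alpha_k=1$ or the test at $\min\{1,\alpha_k/\backtrackingFactor\}$ failed, which is exactly Condition~\ref{step-size-condition}.

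\emph{Backtracking bound.} Here $\alpha_k=\backtrackingFactor^{\lsIter}\alphaInit\le \backtrackingFactor^{\lsIter}\StartingTargetReduction/(-\mk)$. Convexity of $\barrier$ along the segment, followed by Fact~\ref{eq:gradient-dot-product-mk-relationship}, gives
\[
\barrier(x_k)-\barrier(x_{k+1}) \le -\alpha_k\grad\barrier(x_k)^\top\dir{x}_k \le -2\alpha_k\mk \le 2\StartingTargetReduction\backtrackingFactor^{\lsIter}.
\]
Taking $\log_{1/\backtrackingFactor}$ yields $\lsIter \le \log_{1/\backtrackingFactor}\frac{2\StartingTargetReduction}{\barrier(x_k)-\barrier(x_{k+1})}$.

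\emph{Forward bound.} If $\lsIter=0$ there is nothing to prove, so assume $\lsIter\ge1$. Then $\alphaInit<1$, so $\alphaInit=\StartingTargetReduction/(-\mk)$. Also $\alpha_{k,\lsIter-1}=\alphaInit\backtrackingFactor^{-(\lsIter-1)}<1$ satisfied sufficient decrease. Sufficient decrease together with Fact~\ref{lem:model-alpha-k-bound} gives
\[
\barrier(x_k)-\barrier^\star \ge -\SufficientConstant\Mk(\alpha_{k,\lsIter-1}\dir{x}_k) \ge \SufficientConstant\alpha_{k,\lsIter-1}(-\mk) = \SufficientConstant\StartingTargetReduction\backtrackingFactor^{-(\lsIter-1)}.
\]
Rearranging gives $\lsIter\le 1+\log_{1/\backtrackingFactor}\frac{\barrier(x_k)-\barrier^\star}{\SufficientConstant\StartingTargetReduction}$, which is within the claimed bound.

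The only delicate step is the correctness argument: justifying that small steps stay feasible and pass the test, and checking that the cap at $1$ makes $\hat\alpha_k$ coincide with the previously rejected trial. The counting bounds are routine.
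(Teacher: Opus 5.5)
Your proof is correct and follows essentially the same route as the paper. Termination is a first-order Taylor argument; the backtracking count combines convexity of $\barrier$ with $-\grad \barrier(x_k)^\top \dir{x}_k \le -2\mk$; and the forward-tracking count applies sufficient decrease at the last accepted trial $\alpha_{k,j-1}$ together with \Cref{lem:model-alpha-k-bound}. The differences are cosmetic: you check Condition~\ref{step-size-condition} at exit explicitly, which the paper leaves as ``by construction.'' In the forward bound you use $-\Mk(\alpha\dir{x}_k) \ge \alpha(-\mk)$ directly rather than the paper's looser $\alpha(-\mk)/2$, so you get a slightly sharper bound that implies the stated one.
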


\begin{proof}
Fix $k$, and let $j$ be the value of $\lsIter$ returned by
Algorithm~\ref{alg:compute-step-size}. Since $\grad \barrier(x_k)^\top \dir{x}_k<0$
and $\SufficientConstant < 1$, we conclude by Taylor's theorem that for sufficiently small $\alpha$  the sufficient decrease 
condition holds. Moreover, by construction \Cref{alg:compute-step-size} terminates 
with a point satisfying Condition~\ref{step-size-condition}.

It remains to bound $j$. First, consider the case that the backtracking branch is entered (Line~\ref{line:backtracking-branch}).
Then, by $(i)$ convexity of $\barrier$, $(ii)$ definition of $\alphaInit$, and $(iii)$ 
\Cref{eq:gradient-dot-product-mk-relationship},
\begin{flalign*}
\barrier(x_k)-\barrier(x_{k+1}) &\underle{(i)}
    -\alpha_{k,j}\grad \barrier(x_k)^\top \dir{x}_k  =
    -\backtrackingFactor^j
    \alphaInit
    \grad \barrier(x_k)^\top \dir{x}_k  \underle{(ii)}
     \frac{\backtrackingFactor^j \StartingTargetReduction \grad \barrier(x_k)^\top \dir{x}_k}{\mk} \underle{(iii)} 2 \backtrackingFactor^j \StartingTargetReduction.
\end{flalign*}
Rearranging yields $j \le \log_{1/\backtrackingFactor} \frac{2\StartingTargetReduction}{\barrier(x_k)-\barrier(x_{k+1})}$ as desired.

Next,  consider the case that the forward tracking branch is entered (Line~\ref{line:forwardtracking-branch}).
If $j = 0$ then the result trivially holds.
On the other hand, if $j > 0$ then $\alphaInit = \StartingTargetReduction / (-\mk)$ and
\begin{flalign*}
\barrier(x_k)-\barrier(x_{k} + \alpha_{k,j-1}\dir{x}_k) &\underge{(i)}  -\SufficientConstant \Mk(\alpha_{k,j-1}\dir{x}_k) 
\underge{(ii)} -\frac{\SufficientConstant  \alpha_{k,j-1} \mk}{2}
\undereq{(iii)}  -\frac{\SufficientConstant \mk}{2} \cdot \frac{\backtrackingFactor^{1-j} \StartingTargetReduction}{-\mk}
= \frac{\SufficientConstant \StartingTargetReduction \backtrackingFactor^{1-j}}{2}.
\end{flalign*}
where $(i)$ holds by Line~\ref{line:forwardtracking-branch}, Line~\ref{line:forwardtracking-update} and 
the definition of \Call{SufficientDecreaseConditionHolds}{}, $(ii)$
\Cref{lem:model-alpha-k-bound} and $(iii)$ $\alphaInit = \StartingTargetReduction / (-\mk)$ and $\alpha_{k,j-1} = \alphaInit \backtrackingFactor^{1-j}$.
Rearranging and using $\barrier(x_{k} + \alpha_{k,j-1}\dir{x}_k) \ge \barrier^\star$ gives the desired result.
\end{proof}

\begin{remark}
The $\log_{1/\backtrackingFactor} \frac{2\StartingTargetReduction}{\barrier(x_k)-\barrier(x_{k+1})}$ term in
the bound on $j$ from \Cref{prop:line-search-iteration-bound} could be large if $\barrier(x_k)-\barrier(x_{k+1})$ 
was very tiny. 
However, we can employ the lower bounds on $\barrier(x_k)-\barrier(x_{k+1})$ 
from the body of the paper to rule out this possibility. 
If $\min_{i \in [7]} \alphaInd{k}^{(i)} < 1$ then 
\Cref{lemBarrierProgress} shows that $\barrier(x_k)-\barrier(x_{k+1})$ is bounded below
proportional to $\mu^{5/3}$. On the other hand, if $\min_{i \in [7]} \alphaInd{k}^{(i)} \ge 1$
then $\alpha_k = 1$ by \Cref{lemMinimumStepSizeIPM} which implies that the 
forward tracking branch was entered and thus 
$j \le \max\left\{0, 1 + \log_{1/\backtrackingFactor} \frac{2 (\barrier(x_k)-\barrier^\star)}{\SufficientConstant \StartingTargetReduction} \right\}$.
\end{remark}


\clearpage

\bibliography{references}

\end{document}